\tikzstyle{startstop} = [rectangle, rounded corners, minimum width=1.8cm, minimum height=0.7cm,text centered, draw=black, fill=red!30]
\tikzstyle{io} = [trapezium, trapezium left angle=70, trapezium right angle=110, minimum width=3cm, minimum height=1cm, text centered, draw=black, fill=blue!30]
\tikzstyle{process} = [rectangle, minimum width=3cm, minimum height=1cm, text centered, draw=black, fill=orange!30]
\tikzstyle{decision} = [diamond, minimum width=3cm, minimum height=1cm, text centered, draw=black, fill=green!30]
\tikzstyle{arrow} = [thick,->,>=stealth]
\pgfplotsset{compat=1.11}
\crefname{hypothesis}{Hypothesis}{Hypotheses}
\title{A fourth-order cut-cell method for solving 
  the two-dimensional advection-diffusion equation 
  with moving boundaries}
\author{Kaiyi Liang
  \thanks{School of Mathematical Sciences, Zhejiang University,
    38 Zheda Road, Hangzhou, Zhejiang Province, 310027 China 
    (kyliang@zju.edu.cn, yukezhu0323@126.com).}
  \and Yuke Zhu \footnotemark[1]
  \and Jiyu Liu \footnotemark[1]
  \and Qinghai Zhang \footnotemark[1]
  \thanks{Corresponding author. 
    Institute of Fundamental and Transdisciplinary Research,
    Zhejiang University, Hangzhou, Zhejiang, 310058, China 
    (\email{qinghai@zju.edu.cn}).}}
\newcommand{\bi}{{\mathbf{i}}}
\newcommand{\bj}{{\mathbf{j}}}
\newcommand{\bk}{{\mathbf{k}}}
\newcommand{\be}{{\mathbf{e}}}
\newcommand{\bu}{{\mathbf{u}}}
\newcommand{\bv}{{\mathbf{v}}}
\newcommand{\bx}{{\mathbf{x}}}
\newcommand{\bp}{{\mathbf{p}}}
\newcommand{\bn}{{\mathbf{n}}}
\newcommand{\adv}{\mathbf{L}_\mathrm{adv}}
\newcommand{\lap}{\mathbf{L}_\mathrm{lap}}
\newcommand{\eadv}{\mathcal{L}_\mathrm{adv}}
\newcommand{\elap}{\mathcal{L}_\mathrm{lap}}
\newcommand{\dif}{\mathrm{d}}
\newcommand{\vol}[1]{\left \| #1 \right \|}
\newcommand{\avg}[1]{\left\langle #1 \right\rangle}
\newcommand{\fdsy@scale}{1.0}
\newcommand\fdsy@mweight@normal{Book}%
\newcommand\fdsy@mweight@small{Regular}%
\newcommand\fdsy@bweight@normal{Medium}%
\newcommand\fdsy@bweight@small{Bold}%
\DeclareFontFamily{U}{FdSymbolF}{}
\DeclareFontShape{U}{FdSymbolF}{m}{n}{
    <-7.1> s * [\fdsy@scale] FdSymbolF-\fdsy@mweight@small
    <7.1-> s * [\fdsy@scale] FdSymbolF-\fdsy@mweight@normal
}{}
\DeclareFontShape{U}{FdSymbolF}{b}{n}{
    <-7.1> s * [\fdsy@scale] FdSymbolF-\fdsy@bweight@small
    <7.1-> s * [\fdsy@scale] FdSymbolF-\fdsy@bweight@normal
}{}
\DeclareSymbolFont{delimiters}{U}{FdSymbolF}{m}{n}
\DeclareMathDelimiter{\lAngle}{\mathopen}{delimiters}{"92}{delimiters}{"92}
\DeclareMathDelimiter{\rAngle}{\mathclose}{delimiters}{"98}{delimiters}{"92}
\newcommand{\bbrk}[1]{\left\lAngle #1 \right\rAngle}
\newcommand{\bbR}{\mathbb{R}}
\newcommand{\YIN}{\mathbb{Y}}
\newcommand{\MARS}{{\mathfrak L}_{\textup{MARS}}}
\newcommand{\app}{\mathrm{app}}
\newcommand{\Tiny}{\mathrm{tiny}}
\newcommand{\bC}{\mathbf{C}}
\newcommand{\bF}{\mathbf{F}}
\newcommand{\bN}{\mathbf{N}}
\newcommand{\hfIdx}{\bi+\frac{1}{2}\be^d}
\begin{document}
\begin{sloppypar}
\maketitle

% REQUIRED
\begin{abstract}
We propose a fourth-order cut-cell method 
 for solving the two-dimensional advection-diffusion equation 
 with moving boundaries 
 on a Cartesian grid. 
We employ the ARMS technique 
 to give an explicit and accurate representation of moving boundaries, 
 and introduce a cell-merging technique to overcome 
 discontinuities caused by topological changes in cut cells 
 and the small cell problem. 
We use a polynomial interpolation technique 
 base on poised lattice generation  
 to achieve fourth-order spatial discretization, 
 and use a fourth-order implicit-explicit Runge-Kutta scheme 
 for time integration. 
Numerical tests are performed on various moving regions, 
 with advection velocity both matching and differing from boundary velocity, 
 which demonstrate the fourth-order accuracy of the proposed method. 
 
\end{abstract}

% REQUIRED
\begin{keywords}
Advection-diffusion equation, moving boundaries, cut-cell method, 
fourth-order accuracy, Yin set, ARMS, poised lattice generation.
\end{keywords}

% REQUIRED
\begin{MSCcodes}
  35G16, % Initial-boundary value problems for linear high-order PDEs
  35M13, % Initial-boundary value problems for PDEs of mixed type
  76M12, % Finite volume methods applied to problems in fluid mechanics
  76R99, % Diffusion and convection: None of the above, but in this section
  80M12  % Finite volume methods applied to problems in thermodynamics 
         % and heat transfer
\end{MSCcodes}

\section{Introduction}
\label{sec:Introduction}
The advection-diffusion equation 
 is a fundamental partial differential equation (PDE) 
 used to model a variety of physical phenomena, 
 such as air pollution~\cite{Sharan2003} and heat transfer~\cite{Fei2018}. 
In this study, 
 we focus on the two-dimensional advection-diffusion equation 
 with moving boundaries, 
 the dimensionless form is given by 
\begin{equation}
    \label{eq:AdvDiff}
    \begin{aligned}
    \dfrac{\partial \rho(\bx, t)}{\partial t} 
        + \bu(\bx, t) \cdot \nabla\rho(\bx, t) &= 
        \dfrac{1}{\mathrm{Pe}} \Delta \rho(\bx, t) 
        && \mathrm{in}\, \Omega(t)\times[0,T],\\
    \rho(\bx, t_0) &= \rho_\mathrm{init}(\bx) 
        && \mathrm{in}\, \Omega(0),\\
    \rho(\bx,t) &= \rho_\mathrm{bc}(\bx, t) 
        && \mathrm{on}\, \partial\Omega(t)\times[0,T],
    \end{aligned}
\end{equation}
where $\Omega(t)$ is a time-dependent domain driven by a velocity field 
 $\bv(\bx, t) \coloneqq (v_1(\bx, t), v_2(\bx, t))$, 
 $\partial \Omega(t)$ its boundary, 
 $\bx$ the spatial position, $t\in[0,T]$ the time, 
 $\bu(\bx, t) \coloneqq (u_1(\bx, t), u_2(\bx, t))$ the advection velocity, 
 $\rho(\bx, t)$ the unknown variable, 
 $\rho_\mathrm{init}(\bx)$ the initial condition, 
 $\rho_\mathrm{bc}(\bx, t)$ the boundary conditions, 
 and $\mathrm{Pe}$ the Peclet number. 

In recent decades, 
 significant progress has been made in developing numerical methods 
 for solving PDEs on complex domain with moving boundaries. 
These methods can be classified into two main catagories: 
 conforming and non-conforming mesh methods. 
In conforming mesh methods, 
 the mesh adapts to the domain boundaries as they evolve in time, 
 allowing straightforward application of boundary conditions 
 and high-order accuracy. 
However, owing to the large translation and deformation of the boundaries, 
 special treatments such as re-meshing are needed, which is time consuming. 
Popular conforming mesh methods include 
 Arbitrary Lagrangian-Eulerian (ALE) methods~\cite{
    Boiarkine2011ConvDiffALE, Formaggia2004ALE} 
 and particle finite element methods (PFEM)~\cite{
    Cremonesi2020PFEM_Review, Idelsohn2006PFEM}. 

Non-conforming mesh methods, on the other hand, use fixed Cartesian grids, 
 providing simpler grid generation and improved computational efficiency. 
Immersed boundary method (IBM)~\cite{
    Kim2006IBM,Liao2010IBM_MvBdry,Peskin2002IBM}
 was first introduced by Peskin~\cite{Peskin1972IBM} 
 to study flow patterns around heart valves. 
In IBM, a Cartesian grid is used for the entire fluid domain, 
 and the boundary of the immersed elastic object 
 is treated as a set of Lagrangian fluid particle. 
The effect of the object is represented by a force exerted on the fluid, 
 which is added to the model equation 
 by approximating it through a discrete Dirac delta function. 
This approach eliminates the need for costly mesh updates, 
 but is limited to structures without volumes.  
Fictitious domain method~\cite{Glowinski2001, Yu2006} 
 and Immersed finite element method (IFEM)~\cite{
    ZhangLT2007IFEM, ZhangLucy2004IFEM} 
 address the limitation of IBM 
 by treating the union of fluid phase and solid phase as one continuum 
 and using a discrete Dirac delta function 
 to couple the velocity fields in fluid phase and solid phase. 
In above methods, 
 a Dirac delta function is needed to handle the discontinuity 
 between the fluid and the solid phases, 
 however, the discrete approximation of the Dirac delta function 
 may limit the accuracy.
Immersed interface methods (IIM)~\cite{
    LiZhilin1997IMM_MvInterface, Russel2003IIM, Xu2006IIM_MvBdry}, 
 use an alternative way to deal with this discontinuity. 
IIM was first developed by LeVeque and Li~\cite{Leveque1994IIM, LeVeque1997IIM} 
 to solve elliptic interface problems, 
 the key idea is to incorporate the jump conditions 
 caused by the Dirac delta function into finite difference schemes, 
 thus avoiding the approximation and achieving high-order accuracy. 

In this work, we focus on the cut-cell finite volume method, 
 which calculates the geometries of boundary cells directly 
 within the non-conforming framework. 
This method allows for higher accuracy near the boundary, 
 but requires an interface tracking (IT) method 
 to represent the moving boundary accurately. 
Most methods~\cite{Barrett2022, Meinke2013, Schneiders2013} 
 employ the level-set method to track the interface, 
 which defines a signed-distance function $\zeta(\bx, t)$ and 
 use the zero level-set $\{\bx : \zeta(\bx, t) = 0\}$ 
 to represent the boundary. 
While the computation is convenient,  
 level-set method requires reconstructing 
 an explicit boundary representation at each time step, 
 which is hard to achieve high-order accuracy. 
Another challenge in processing moving boundary problems 
 arises from topological changes. 
For example, an interface cell may become a pure solid cell 
 or a pure fluid cell in a time step (or vice-versa). 
These transitions introduce discontinuities in cell-averaged values, 
 limiting the cut-cell methods' accuracy. 
Schneiders et al.~\cite{Schneiders2013} proposed a method 
 using weight functions to make the discrete operator change smoothly 
 during these transitions, 
 but it still cannot ensure high-order continuity. 
To our acknowledge, 
 no current cut-cell methods have resolve this issue 
 to achieve accuracy beyond second-order 
 when handling moving boundary problems. 
Additionally, 
 cut-cell methods suffer from the so-called ``small cell problem'', 
 where extremely small time steps are required for stability 
 in cells with small volumes 
 due to the Courant–Friedrichs–Lewy (CFL) condition. 
Approaches to address this issue include 
 cell merging technique~\cite{Causon2001Cell_Merging},
 the h-box method~\cite{Helzel2005H_Box}
 and flux-redistribution technique~\cite{Schneiders2013}. 

These difficulties give rise to the following questions:
\begin{enumerate}[({Q}-1)]
    \item \label{Q-1} Can we represent the moving boundary accurately 
     in the framework of cut-cell methods;
    \item \label{Q-2} Can we get rid off the accuracy limitation 
     caused by topological changes;
    \item \label{Q-3} Can we achieve high-order accuracy 
     when handling complex moving regions.
\end{enumerate}

In this study, we give positive answer to all these questions. 
For (Q-\ref{Q-1}), 
 we employ the ARMS technique~\cite{Hu2024ARMS} 
 to track the moving boundary. 
This method achieves a fourth-order explicit representation 
 of the boundary at any given time, 
 and the moving region is modeled by a Yin set~\cite{Zhang2020YinSet} 
 equipped with efficient boolean operators, 
 which is employed in this work to cut cells. 
For (Q-\ref{Q-2}), 
 we analyze the accuracy limitations 
 in solving moving boundary problem with cut-cell methods, 
 and introduce a cell-merging technique, 
 which processes all cases that will introduce discontinuities 
 as long as the presence of small cells. 
For (Q-\ref{Q-3}), 
 we derive standard discretization formula for cells far from boundaries 
 and use polynomial interpolation technique 
 based on poised lattice generation (PLG)~\cite{Zhang2024PLG} 
 for cells near boundaries 
 to give a fourth-order accurate spatial discretization. 
We also use a fourth-order implicit-explicit Runge-Kutta (IMEX) scheme 
 for time integration. 
Since the accuracy limitations are no longer exist, 
 we can achieve fourth-order accuracy in both space and time. 

The rest of this paper is organized as follows. 
A review of key prerequisites for our algorithm is provided 
 in \Cref{sec:Preliminaries}. 
In \Cref{sec:Algorithm}, 
 we introduce our algorithm for solving 
 the two-dimensional advection-diffusion equation 
 with moving boundaries. 
Various numerical tests are conducted in \Cref{sec:Numerical_results} 
 to demonstrate the fourth-order accuracy of our method. 
Finally, we conclude this work in \Cref{sec:Conclusions}.

\section{Preliminaries}
\label{sec:Preliminaries}
In this section, 
 we briefly review the Yin sets~\cite{Zhang2020YinSet} 
 as a theoretical model of continua, 
 the MARS framework~\cite{Zhang2016MARS} 
 which unifies explicit IT methods and provides an error analysis technique, 
 and the ARMS strategy~\cite{Hu2024ARMS} 
 as a high-order IT method for solving general IT problems. 
These methods are employed in this work to model and track the moving region 
 and to generate cut cells at any given time. 
Additionally, we review the Reynolds transport theorem 
 to facilitate the finite volume discretization 
 and the analysis of the discontinuities caused by topological changes. 

\subsection{Yin sets that model continua}
In a topological space ${\cal X}$,
 denote by ${\cal P}'$, $\overline{{\cal P}}$, and ${\cal P}^{\circ}$ 
 the complement, the closure, and the interior 
 of a subset ${\cal P}\subseteq{\cal X}$, respectively.
The exterior of ${\cal P}$, 
 written ${\cal P}^{\perp}\coloneqq({\cal P}')^{\circ}$,
 is the interior of its complement. 
An open set ${\cal P}\subseteq{\cal X}$ is \emph{regular} 
 if ${\cal P}=\overline{\cal P}^{\circ}$.
A closed set ${\cal P}\subseteq{\cal X}$ is \emph{regular}
 if ${\cal P}=\overline{\mathcal P^{\circ}}$.

Regular sets, open or closed, capture the salient feature 
 that regions occupied by continua are free of lower-dimensional elements
 such as isolated points and curves in $\mathbb{R}^2$.  
However, as shown in \cite[eqn (3.1)]{Zhang2020YinSet}, 
 the intersection of two regular sets
 might contain an infinite number of connected components,
 making it difficult to store regular sets on computer.
Therefore, we need to further add the restriction of semianalytic sets
 to reach a subspace of regular sets 
 where each element can be described by a finite number of entities.

\begin{definition}
    \label{def:semianalytic}
    A set ${\cal P}\subseteq \bbR^{2}$ is \emph{semianalytic}
     if there exist a finite number of analytic functions $g_i:\bbR^{2}\to\bbR$
     such that ${\cal P}$ is in the universe of a finite Boolean algebra 
     formed from the sets $\left\{\bx\in\bbR^{2}:g_i(\bx)\geq 0\right\}$.
    The $g_i$'s are called the generating functions of ${\cal P}$.
\end{definition} 

As illustrated in \cite[Figure 5]{Zhang2020YinSet},
 a regular \emph{open} set can be uniquely represented 
 by its boundary Jordan curves
 while a regular \emph{closed} set can not.
This leads to

\begin{definition}[Yin space~\cite{Zhang2020YinSet}]
  \label{def:Yin_space}
  A \emph{Yin set} ${\cal Y}\subseteq\bbR^{2}$ is a regular open semianalytic set 
  whose boundary is bounded. 
  All Yin sets form the \emph{Yin space} $\YIN$.
\end{definition}

\begin{theorem}[Global topology of Yin sets~\cite{Zhang2020YinSet}]
    \label{thm:Unique_representation}
    The boundary of a connected Yin set 
     \mbox{${\cal Y}\ne \emptyset, \mathbb{R}^2$}
     can be uniquely partitioned into 
     a finite set of pairwise almost disjoint Jordan curves, 
     which can be uniquely oriented so that
    \begin{equation*}
        {\cal Y} = \bigcap_{\gamma_{j}\in{\cal J}_{\partial {\cal Y}}} 
                 \mathrm{int}(\gamma_j),
    \end{equation*}
     where the \emph{interior of an oriented Jordan curve},
     written $\mathrm{int}(\gamma)$,
     is the component of the complement of $\gamma$ 
     that always lies to the left of the observer 
     who traverses $\gamma$ according to its orientation.
    ${\cal J}_{\partial{\cal Y}}$ is 
     the set of oriented boundary Jordan curves of ${\cal Y}$.
\end{theorem}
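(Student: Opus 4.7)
The plan is to carry out the proof in four stages: first decompose $\partial\mathcal{Y}$ into finitely many analytic arcs using semianalyticity; then resolve branch points into a canonical pairing of arcs; then orient the resulting curves using the regular-open hypothesis; and finally verify the set-theoretic identity. Since $\mathcal{Y}$ is semianalytic by \Cref{def:semianalytic} and $\partial\mathcal{Y}$ is bounded by the Yin-space definition, \L{}ojasiewicz's stratification theorem for semianalytic sets provides a decomposition of $\partial\mathcal{Y}$ into a finite set of points (0-strata) and finitely many open real-analytic arcs (1-strata). Because $\mathcal{Y}$ is regular open, no 2-strata appear on $\partial\mathcal{Y}$, and on each 1-stratum the set $\mathcal{Y}$ lies on exactly one of the two local sides.

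Next I would exploit $\mathcal{Y}=\overline{\mathcal{Y}}^{\circ}$ at each 0-stratum $p$. In a small ball around $p$, the incident 1-strata divide the disk into a finite cyclic sequence of open wedges, each contained either in $\mathcal{Y}$ or in $\mathcal{Y}^{\perp}$. Regularity forbids two $\mathcal{Y}$-wedges from sharing a boundary arc, because that shared arc would then lie in $\overline{\mathcal{Y}}^{\circ}\setminus\mathcal{Y}$; hence the wedges alternate between $\mathcal{Y}$-type and $\mathcal{Y}^{\perp}$-type, and the number of incident arcs is even. The canonical pairing rule is then forced: two incident arcs are declared continuations of each other exactly when they bound a common $\mathcal{Y}$-wedge at $p$. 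This is the unique pairing producing non-crossing walks, and iterating the resulting successor relation from any arc closes up after finitely many steps into a simple closed curve, i.e. a Jordan curve. The resulting finite family partitions $\partial\mathcal{Y}$, and any two distinct curves meet only at 0-strata, hence almost disjointly. Orienting each curve so that the adjacent $\mathcal{Y}$-wedge lies on its left at every 1-stratum gives a well-defined and unique orientation, since $\mathcal{Y}$ sits on a determinate side of each arc.

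To finish, I would verify $\mathcal{Y}=\bigcap_{\gamma_{j}\in\mathcal{J}_{\partial\mathcal{Y}}}\mathrm{int}(\gamma_{j})$ by two inclusions. The forward inclusion follows because $\mathcal{Y}$ is open, connected, disjoint from $\partial\mathcal{Y}$, and lies locally on the left of every oriented boundary arc, hence globally on the left of each $\gamma_{j}$. For the reverse inclusion, the right-hand side is a regular open semianalytic set with boundary contained in $\bigcup_{j}\gamma_{j}=\partial\mathcal{Y}$ and sharing a local side with $\mathcal{Y}$ at every boundary point; connectedness of $\mathcal{Y}$ then forces the two sets to coincide. The hardest step is the pairing at 0-strata: one must verify rigorously that regularity enforces the wedge-alternation, that the bound-a-common-wedge rule is consistent at both endpoints of every arc, and that the resulting closed walks are simple rather than self-intersecting. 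The semianalytic hypothesis is essential here because it is what guarantees the finite-arc local picture on which the combinatorial pairing argument rests.
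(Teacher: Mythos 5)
The paper itself offers no proof of this theorem; it is quoted verbatim from \cite{Zhang2020YinSet}, so your attempt has to be judged against the construction in that reference, and it contains a genuine error at its central step. Your canonical pairing rule at a 0-stratum --- ``two incident arcs are continuations of each other exactly when they bound a common $\mathcal{Y}$-wedge'' --- is the wrong pairing, and the closed walks it generates need not be simple. Concretely, let $\mathcal{Y}=D_0\setminus(\overline{D_1}\cup\overline{D_2})$, where $D_0$ is a large open disk and $D_1,D_2$ are open disks inside $D_0$ that are externally tangent at a point $p$. This is a connected Yin set. At $p$ there are four incident arc-ends; the two wedges lying in $\mathcal{Y}$ are the cusp regions between the circles, while the two complementary wedges are the interiors of $D_1$ and $D_2$. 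Your rule pairs the upper arc of circle~1 with the upper arc of circle~2 (they bound a common cusp) and likewise the two lower arcs, so the successor relation produces a single closed walk traversing both inner circles and passing through $p$ twice --- not a Jordan curve. The partition asserted by the theorem is $\{$outer circle, circle~1, circle~2$\}$, which corresponds to pairing arc-ends across the \emph{complementary} wedges; equivalently, the arc-ends at $p$ must be grouped by the connected component of the complement they abut, and each $\gamma_j$ is the boundary of one such component. Hence your claim that your rule is ``the unique pairing producing non-crossing walks'' is false, and the step you yourself flag as the hardest (simplicity of the closed-up walks) genuinely fails for the rule you chose.

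A symptom of the same problem is that connectedness of $\mathcal{Y}$ plays no role in your construction, even though the theorem fails without it (for the union of two externally tangent open disks, a regular open semianalytic set, no oriented partition of the boundary into almost disjoint Jordan curves satisfies $\mathcal{Y}=\bigcap_j\mathrm{int}(\gamma_j)$). In a correct argument, connectedness is precisely what guarantees that each complementary component meets a 0-stratum $p$ through at most one wedge: otherwise one could join two complementary wedges by a path inside that component and two $\mathcal{Y}$-wedges by a path inside $\mathcal{Y}$, and closing both paths through $p$ would give two Jordan curves meeting only at $p$ yet crossing there, contradicting the Jordan curve theorem. That is what makes the boundary of each complementary component simple and yields the decomposition. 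Two smaller gaps: your wedge-alternation argument only rules out two adjacent $\mathcal{Y}$-wedges (via regularity) but not two adjacent complementary wedges (which is ruled out because the shared arc lies in $\partial\mathcal{Y}\subset\overline{\mathcal{Y}}$), and the reverse inclusion in the identity $\mathcal{Y}=\bigcap_j\mathrm{int}(\gamma_j)$ needs an actual argument (e.g., that every complementary component lies in the right-hand exterior of its own $\gamma_j$), not just an appeal to connectedness and ``sharing a local side.''
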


Theorem \ref{thm:Unique_representation} furnishes 
 the unique and efficient representation of any Yin set
 as a partially ordered set of oriented Jordan curves~\cite{Zhang2020YinSet}.
The pairwise almost disjointness of Jordan curves
 precludes crossing intersections of any two curves, 
 and this obviates the need of treating self-intersections 
 for practical applications.
 
Following \cite{Hu2024ARMS},
 the unique representation of any Yin set ${\cal Y}$ 
 as a partially ordered set of oriented Jordan curves
 is realized in two steps.
First, each Jordan curve $\gamma_j\subset {\cal J}_{\partial{\cal Y}}$ 
 is reduced to a finite sequence $(\mathbf{X}^j_i)_{i=0}^{N}\subset\gamma_j$
 of characteristic points or markers,
 whose numbering is consistent with the orientation of $\gamma_j$. 
Second, a piecewise smooth curve
 is generated from $(\mathbf{X}^j_i)_{i=0}^{N}\subset \gamma_j$
 to approximate $\gamma_j$. 
In this work, 
 the curve fitting scheme is periodic cubic spline fitting. 

Yin sets also form a Boolean algebra
 with respect to regularized operations.

\begin{theorem}
    \label{thm:BooleanAlgebra}
    The universal algebra 
     $\mathbf{Y}\coloneqq(\YIN,\ \cup^{\perp\perp},\ \cap,\ \, ^{\perp},\
        \emptyset,\ \bbR^2)$
     is a Boolean algebra,
     where the binary operation of regularized union is given by 
     ${\cal P}\cup^{\perp\perp}{\cal Q}\coloneqq
        ({\cal P}\cup{\cal Q})^{\perp\perp}$
     for all ${\cal P},\,{\cal Q}\in\YIN$.
\end{theorem}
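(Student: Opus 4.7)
The plan is to exhibit $\mathbf{Y}$ as a subalgebra of the classical Boolean algebra of regular open subsets of $\bbR^{2}$, in which join is regularized union, meet is ordinary intersection, complement is the exterior operation $^{\perp}$, and the bottom and top are $\emptyset$ and $\bbR^{2}$. Granted that classical fact, essentially only two things require work: first, closure, i.e.\ each of $\cup^{\perp\perp}$, $\cap$, and $^{\perp}$ sends $\YIN\times\YIN$ (resp.\ $\YIN$) into $\YIN$; second, that $\emptyset$ and $\bbR^{2}$ qualify as (limiting) Yin sets, which is immediate from \Cref{def:Yin_space} by vacuity.

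For closure I would verify each of the three defining properties of a Yin set in turn. Regularity of $\mathcal{P}\cap \mathcal{Q}$ follows from the sandwich $\mathcal{P}\cap\mathcal{Q}\subseteq(\overline{\mathcal{P}\cap\mathcal{Q}})^{\circ}\subseteq\overline{\mathcal{P}}^{\circ}\cap\overline{\mathcal{Q}}^{\circ}=\mathcal{P}\cap\mathcal{Q}$, while regularity of $\mathcal{P}^{\perp}$ and of $\mathcal{P}\cup^{\perp\perp}\mathcal{Q}$ is immediate from the general fact that the interior of the closure of any set is regular open. Semianalyticity is preserved because \Cref{def:semianalytic} places $\mathcal{P}$ and $\mathcal{Q}$ in a finite Boolean algebra generated by analytic functions $\{g_i\}$, and each of the three operations $\cup^{\perp\perp},\cap,\,^{\perp}$ produces another member of the finite Boolean algebra generated by the union $\{g_i\}\cup\{g_j\}$, once one observes that interior, closure, and exterior of a semianalytic set are themselves semianalytic (a standard fact from real algebraic geometry). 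Bounded boundary follows from the inclusions $\partial(\mathcal{P}\cap\mathcal{Q})\subseteq\partial\mathcal{P}\cup\partial\mathcal{Q}$, $\partial\mathcal{P}^{\perp}\subseteq\partial\mathcal{P}$, and $\partial(\mathcal{P}\cup^{\perp\perp}\mathcal{Q})\subseteq\partial\mathcal{P}\cup\partial\mathcal{Q}$, each of which yields a bounded set from bounded inputs.

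For the axioms, commutativity of $\cap$ and of $\cup^{\perp\perp}$ is trivial, and the identity, absorption, and complementation laws follow from standard facts about the exterior; for instance, $\mathcal{P}\cup^{\perp\perp}\mathcal{P}^{\perp}=(\mathcal{P}\cup\mathcal{P}^{\perp})^{\perp\perp}=(\bbR^{2}\setminus\partial\mathcal{P})^{\perp\perp}=\bbR^{2}$, because $\partial\mathcal{P}$ is a closed nowhere-dense set when $\mathcal{P}$ is regular open. Associativity of $\cup^{\perp\perp}$ reduces, after one application of the identity $(A\cup B^{\perp\perp})^{\perp\perp}=(A\cup B)^{\perp\perp}$, to the associativity of $\cup$; this identity is in turn a consequence of $\overline{B^{\perp\perp}}=\overline{B}$ for every set $B$.

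The main obstacle I anticipate is the distributive law $\mathcal{P}\cap(\mathcal{Q}\cup^{\perp\perp}\mathcal{R})=(\mathcal{P}\cap\mathcal{Q})\cup^{\perp\perp}(\mathcal{P}\cap\mathcal{R})$, since a regularization appears on both sides and, a~priori, the double exterior does not distribute over intersection. The key lemma is: for every set $S$ and every regular open $U$, one has $U\cap S^{\perp\perp}=(U\cap S)^{\perp\perp}$, which can be verified by showing that both sides have the same interior-of-closure, using $U=\overline{U}^{\circ}$ and the fact that $U$, being open, has $\overline{U\cap S}=\overline{U\cap \overline{S}}$ restricted to the support of $U$. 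Combining this lemma with the ordinary distributive law $\mathcal{P}\cap(\mathcal{Q}\cup\mathcal{R})=(\mathcal{P}\cap\mathcal{Q})\cup(\mathcal{P}\cap\mathcal{R})$ then yields the required identity and completes the verification that $\mathbf{Y}$ is a Boolean algebra.
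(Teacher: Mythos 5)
The paper itself does not prove this theorem: it is quoted as a known result, with the proof residing in \cite{Zhang2020YinSet}. Judged on its own terms, your architecture is the natural one and its topological half is sound: exhibiting $\mathbf{Y}$ as a subalgebra of the classical Boolean algebra of regular open subsets of $\bbR^2$, checking regular openness of $\mathcal{P}\cap\mathcal{Q}$, $\mathcal{P}^{\perp}$, $\mathcal{P}\cup^{\perp\perp}\mathcal{Q}$, the boundary inclusions for boundedness, the complementation identity, and the distributivity reduction are all correct. One small remark: your key lemma is stated for ``every set $S$'', but you only ever need it for open $S$ (namely $S=\mathcal{Q}\cup\mathcal{R}$), where it is the standard identity $(A\cap B)^{\perp\perp}=A^{\perp\perp}\cap B^{\perp\perp}$ for open $A,B$, following from the elementary fact $U\cap\overline{S}\subseteq\overline{U\cap S}$ for open $U$; stating it at that level of generality would make the justification cleaner than the ``restricted to the support of $U$'' phrasing.

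The genuine gap is the semianalyticity step. \Cref{def:semianalytic} is a \emph{global} notion: the set must belong to a finite Boolean algebra generated by sets $\{\bx: g_i(\bx)\ge 0\}$ for finitely many analytic functions $g_i:\bbR^2\to\bbR$ defined on all of $\bbR^2$. Closure under $\cap$ is indeed immediate (take the union of the two generating families), but closure under $^{\perp}$ and $\cup^{\perp\perp}$ requires showing that the interior or closure of such a set again admits \emph{some} finite family of global analytic generating functions, and the Boolean algebra generated by the original $g_i$'s is in general not closed under interior or closure: for $g(x,y)=x^2(x-1)$ the algebra generated by $\{g\ge 0\}=\{x\ge 1\}\cup\{x=0\}$ has four elements, none of which is the interior $\{x>1\}$, so new generating functions must be manufactured. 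The ``standard fact from real algebraic geometry'' you invoke is the Łojasiewicz theorem for the \emph{local} notion of semianalytic sets; it guarantees local descriptions but does not by itself produce a single finite global family, so quoting it does not close the argument under the paper's definition. Making this step rigorous (using, in particular, the bounded-boundary hypothesis of \cref{def:Yin_space} to reduce to a compact region and extract a finite description) is precisely the nontrivial content carried out in \cite{Zhang2020YinSet}; as written, your proposal assumes it away.
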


Efficient algorithms of Boolean operations on Yin sets 
 have been developed in~\cite{Zhang2020YinSet} 
 and are employed in this work to cut cells.

\subsection{The MARS framework}
Yin sets only model stationary continua. 
For movement of a continuum, 
 we first introduce the flow map. 
Under a given velocity field $\bv$, 
 the continuum is evolved by the ordinary differential equation (ODE)
\begin{equation}
   \label{eq:ODE_IT}
   \frac{\dif \bp}{\dif t} = \bv(\bp,t),
\end{equation}
 where $t$ is the time and $\bp(t)$ is the position of a Lagrangian particle 
 inside the continuum.
Given that $\bv(\bx,t)$ is continuous in time 
 and Lipschitz continuous in space, 
 \cref{eq:ODE_IT} admits an unique solution 
 for any given initial time and initial position. 
This uniqueness furnishes an (exact) \emph{flow map} 
 $\phi:\bbR^{2}\times\bbR\times\bbR\to\bbR^{2}$,
\begin{equation}
   \label{eq:Flow_map}
   \phi_{t_0}^{\pm\tau}(\bp) \coloneqq \bp(t_0\pm\tau) = 
   \bp(t_0) + \int_{t_0}^{t_0\pm\tau} \bv(\bp(t),t)\ \dif t.
\end{equation}
 where the three independent variables 
 $\bp(t_0)$, $t_0$, and $\tau\ge 0$ 
 are the initial position, the initial time, and the time increment, 
 respectively. 
The flow map leads to 

\begin{definition}[General IT]
    \label{def:Interface_tracking}
    The \emph{problem of general IT} 
     is to determine a Yin set ${\cal M}(t)\in \YIN$, c.f. \cref{def:Yin_space},
     from the initial condition ${\cal M}(t_0)\in \YIN$ 
     and a continuous flow map 
     $\phi_{t_0}^{\tau}: \YIN\times [0,t-t_0]\to\YIN$.
\end{definition}

Zhang and Fogelson~\cite{Zhang2016MARS} have proposed a framework 
 for solving the problem of general IT. 

\begin{definition}[MARS method~\cite{Zhang2016MARS}]
    \label{def:MARS}
    A \emph{MARS method} is a general IT method of the form
    \begin{equation}
      \label{eq:MARS}
      {\cal M}^{n+1}=\MARS^n {\cal M}^n \coloneqq 
      \left(\chi_{n+1}\circ\varphi_{t_n}^k\circ\psi_n\right){\cal M}^n,
    \end{equation}
     where ${\cal M}^n\in \YIN$ is the approximation 
     of ${\cal M}(t_n)\in \YIN$, 
     $\varphi_{t_n}^k: \YIN\to\YIN$ is a fully discrete mapping operation 
     that approximates the exact flow map $\phi$ both in time and in space,
     $\psi_n:\YIN\to\YIN$ is an augmentation operation at $t_n$ 
     to prepare ${\cal M}^n$ for $\varphi_{t_n}^k$, 
     and $\chi_{n+1}:\YIN\to\YIN$ is an adjustment operation 
     after the mapping $\varphi_{t_n}^{k}$.
\end{definition}

The three operations in \cref{eq:MARS} 
 provide much freedom on the design of IT algorithms
and consequently 
Definition \ref{def:MARS} can be used to unify explicit IT methods including 
 the VOF method~\cite{Hirt1981VOF},
 the MOF method~\cite{Ahn2007MOF},
 the front tracking method~\cite{Tryggvason2001FT}, 
 and the iPAM method~\cite{Zhang2014IPAM}.
We also developed fourth- and higher-order 
 cubic MARS methods~\cite{Zhang2018CubicMARS}
 under the guidance of the MARS framework \cite{Zhang2016MARS}.

Denote by $\vol{\cal Y}\coloneqq\left|\int_{\cal Y}\dif\bx\right|$ 
 the norm for all ${\cal Y}\in\YIN$ 
 and $E_1(t_n)\coloneqq\vol{{\cal M}(t_n)\oplus{\cal M}^{n}}$ 
 the IT error at $t_n$, 
 where ${\cal M}(t_n)$ and ${\cal M}^{n}$ are respectively 
 the exact solution and the computational result at $t_n$, 
 and ``$\oplus$'' denotes the \emph{symmetric difference} 
 or \textit{exclusive disjunction} given by 
 ${\cal P}\oplus{\cal Q}\coloneqq
    ({\cal P}\setminus{\cal Q})\cup^{\perp\perp}({\cal Q}\setminus{\cal P})$. 
The overall error $E_1$ can be divided into a number of individual error terms 
 that correspond to the three operations of the MARS method in \cref{eq:MARS}.

\begin{definition}
    \label{def:MARS_errors}
    Individual errors of a MARS method include the following:
    \begin{itemize}
    \item The \emph{representation error} $E^{\textup{REP}}(t_n)$ 
     is the final IT error caused by 
     approximating the region ${\cal M}(t_0)$ with a Yin set ${\cal M}^0$
     at the initial time.
    \item The \emph{augmentation error} $E^{\textup{AUG}}(t_n)$
     is the accumulated error of augmenting the Yin sets by $\psi_j$.
    \item The \emph{mapping error} $E^{\textup{MAP}}(t_n)$ 
     is the accumulated error of approximating the exact flow map $\phi$
     with the fully discrete flow map $\varphi$.
    \item The \emph{adjustment error} $E^{\textup{ADJ}}(t_n)$
     is the accumulated error of adjusting the mapped Yin sets by $\chi_{j+1}$.
    \end{itemize}
\end{definition}

At $t_n=t_0+nk$, these individual error terms are given by
\begin{subequations}
  \label{eq:MARS_errors}
  \begin{align}
    E^{\textup{REP}}(t_n)&\coloneqq
     \vol{\phi^{nk}_{t_0}\left[{\cal M}(t_0),{\cal M}^0\right]};\\
    E^{\textup{AUG}}(t_n)&\coloneqq
     \vol{\bigoplus_{j=0}^{n-1}\phi_{t_j}^{(n-j)k}
     \left[{\cal M}^j_\psi,{\cal M}^j\right]};\\
    E^{\textup{MAP}}(t_n)&\coloneqq
     \vol{\bigoplus_{j=1}^{n}\phi_{t_j}^{(n-j)k}
     \left[\phi_{t_{j-1}}^k{\cal M}^{j-1}_\psi,
     \varphi^k_{t_{j-1}}{\cal M}^{j-1}_\psi\right]};\\
    E^{\textup{ADJ}}(t_n)&\coloneqq
     \vol{\bigoplus_{j=1}^{n}\phi_{t_j}^{(n-j)k}
     \left[\varphi_{t_{j-1}}^k{\cal M}^{j-1}_\psi, {\cal M}^{j}\right]},
  \end{align}
\end{subequations}
 where ${\cal M}^j_\psi\coloneqq\psi_j{\cal M}^j$
 and $\phi_{t_j}^{(n-j)k}[\cdot,\cdot]$ is a shorthand
 given by
\begin{equation*}
  \forall {\cal P},\ {\cal Q}\in \mathbb{Y},\quad
  \phi_{t_0}^\tau[{\cal P}, {\cal Q}]\coloneqq
   \phi_{t_0}^\tau({\cal P})\oplus\phi_{t_0}^\tau ({\cal Q}).
\end{equation*}

It is emphasized that the individual errors in \cref{def:MARS_errors} 
 are from~\cite{Hu2024ARMS}, 
 which is a modified version 
 of the original definition~\cite[Definition 4.4]{Zhang2016MARS}, 
 see \cite[Section 3.2]{Hu2024ARMS} for details. 

\begin{theorem}
    \label{thm:errorBound}
    The error $E_1(t_n)$ of a MARS method for solving the general IT problem
    in \cref{def:Interface_tracking}
    is bounded as 
    \begin{equation}
      \label{eq:bound}
      E_1(t_n)\leq 
      E^{\textup{REP}}(t_n)+E^{\textup{AUG}}(t_n)
      +E^{\textup{MAP}}(t_n)+E^{\textup{ADJ}}(t_n),
    \end{equation}
    where the individual errors are defined in \cref{eq:MARS_errors}.
\end{theorem}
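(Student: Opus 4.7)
The plan is to proceed by a telescoping argument on the symmetric difference $\oplus$. Two underlying facts drive the proof. First, $\oplus$ is associative and commutative with identity $\emptyset$, and on indicator functions it coincides with addition modulo two, so for any finite collection $\{{\cal X}_i\}$ the identity $\chi_{\bigoplus_i {\cal X}_i} = \sum_i \chi_{{\cal X}_i}\pmod{2}$ yields, upon integration, the XOR-subadditivity $\|\bigoplus_i {\cal X}_i\| \leq \sum_i \|{\cal X}_i\|$. Second, because the exact flow map $\phi$ is a bijection of $\bbR^2$, it commutes with $\oplus$ (that is, $\phi^\tau_{t_0}[{\cal A},{\cal B}] = \phi^\tau_{t_0}({\cal A}) \oplus \phi^\tau_{t_0}({\cal B})$) and satisfies the semigroup identity $\phi^{\tau_2}_{t_0+\tau_1}\circ \phi^{\tau_1}_{t_0} = \phi^{\tau_1+\tau_2}_{t_0}$.

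Next, I would introduce intermediate states that isolate the effect of each of the three discrete operations per time step, all transported forward to $t_n$ by the exact flow. Set $S_j \coloneqq \phi^{(n-j)k}_{t_j}{\cal M}^j$ for $j = 0,\ldots,n$, so that $S_0 = \phi^{nk}_{t_0}{\cal M}^0$ and $S_n = {\cal M}^n$. For each $j = 0,\ldots,n-1$ further define
\begin{equation*}
T^1_j \coloneqq \phi^{(n-j)k}_{t_j}{\cal M}^j_\psi, \qquad T^2_j \coloneqq \phi^{(n-j-1)k}_{t_{j+1}}\varphi^k_{t_j}{\cal M}^j_\psi,
\end{equation*}
which represent the exact push-forwards of the partially discrete states after augmentation alone, and after augmentation followed by discrete mapping. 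Rewriting $T^1_j = \phi^{(n-j-1)k}_{t_{j+1}}\phi^k_{t_j}{\cal M}^j_\psi$ via the semigroup identity, one reads off that the three consecutive differences $S_j\oplus T^1_j$, $T^1_j\oplus T^2_j$, and $T^2_j\oplus S_{j+1}$ match exactly the $j$-th summand of the AUG, MAP, and ADJ expressions in \cref{eq:MARS_errors} (modulo the reindexing $j \mapsto j-1$ used in the MAP and ADJ sums).

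Then I would telescope. Iterating $X\oplus Z = (X\oplus Y)\oplus (Y\oplus Z)$ on the chain $\phi^{nk}_{t_0}{\cal M}(t_0), \phi^{nk}_{t_0}{\cal M}^0, S_0, T^1_0, T^2_0, S_1, \ldots, S_n$ gives
\begin{equation*}
\phi^{nk}_{t_0}{\cal M}(t_0) \oplus {\cal M}^n = \phi^{nk}_{t_0}[{\cal M}(t_0),{\cal M}^0] \oplus \bigoplus_{j=0}^{n-1}\bigl[(S_j\oplus T^1_j) \oplus (T^1_j\oplus T^2_j) \oplus (T^2_j\oplus S_{j+1})\bigr].
\end{equation*}
Regrouping the right-hand side into its four categories and applying XOR-subadditivity once yields the claimed inequality
\begin{equation*}
E_1(t_n) \leq E^{\textup{REP}}(t_n) + E^{\textup{AUG}}(t_n) + E^{\textup{MAP}}(t_n) + E^{\textup{ADJ}}(t_n).
\end{equation*}

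The main delicacy is purely bookkeeping: I must verify that the semigroup identity correctly reconciles the different base times appearing in $T^1_j$ and $T^2_j$, and that regrouping the three chains of differences aligns with the index ranges ($j = 0,\ldots,n-1$ for AUG and $j = 1,\ldots,n$ for MAP and ADJ) used in \cref{eq:MARS_errors}. No analytical machinery beyond the two elementary facts above is required; in particular, no regularity of $\phi$ beyond bijectivity and no specific property of $\psi_j$, $\varphi^k_{t_j}$, or $\chi_{j+1}$ enters the abstract bound, which is why the inequality applies uniformly across the whole MARS family of \cref{def:MARS}.
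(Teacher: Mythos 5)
Your proposal is correct. Note that the paper does not reproduce a proof of this theorem at all---it simply defers to Theorem~3.3 of the cited ARMS reference---and your telescoping argument is essentially the standard derivation behind that result: insert the exact push-forwards $S_j$, $T^1_j$, $T^2_j$ of the per-step intermediate states, use the semigroup property of $\phi$ together with the bracket shorthand $\phi_{t_0}^{\tau}[{\cal P},{\cal Q}]=\phi_{t_0}^{\tau}({\cal P})\oplus\phi_{t_0}^{\tau}({\cal Q})$ to identify consecutive differences with the REP/AUG/MAP/ADJ summands of \cref{eq:MARS_errors}, regroup, and bound. Two small points are worth making explicit. First, because the individual errors in \cref{eq:MARS_errors} are norms of XORs of the per-step terms rather than sums of norms, the regrouping by commutativity and associativity of $\oplus$ must precede the subadditivity step, which is then applied only across the four blocks (three applications of $\vol{A\oplus B}\le\vol{A}+\vol{B}$); your write-up does this correctly, but collapsing it to ``apply subadditivity to all summands'' would give a weaker-looking (though still sufficient) bound not matching the stated definitions. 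Second, $\oplus$ here is the regularized symmetric difference in the Boolean algebra of Yin sets (\cref{thm:BooleanAlgebra}), so the mod-2 indicator identity you invoke holds only up to the boundary curves, a measure-zero discrepancy that is harmless for $\vol{\cdot}$; associativity itself is exact in the Boolean algebra. With these caveats the argument is complete and, as you observe, uses no property of $\psi_j$, $\varphi_{t_j}^{k}$, or $\chi_{j+1}$ beyond their being maps on $\YIN$, which is what makes the bound uniform over the MARS family.
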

\begin{proof}
    See \cite[Theorem 3.3]{Hu2024ARMS}.
\end{proof}

\subsection{The ARMS strategy}
The following ARMS strategy consists of 
 periodic spline fitting as the curve fitting scheme 
 and a specific regularization scheme of markers, 
 so that a definition of the discrete flow map $\varphi_{\app}$ 
 immediately yields a MARS method.

\begin{definition}[The ARMS strategy~\cite{Hu2024ARMS}]
    \label{def:ARMS}
    Suppose we are given a fully discrete flow map $\varphi_{\app}$
     that may or may not depend on a curve fitting scheme. 
    Denote by $[r_{\Tiny}h_L, h_L]$ a range of allowed distances 
     between adjacent interface markers with $r_{\Tiny}<\frac{1}{3}$.
    Within each time step $[t_n,t_n+k]$,
     \emph{the ARMS strategy} takes as its input a set of periodic splines 
     representing $\partial{\cal M}(t_n)$
     and advances it to that representing $\partial{\cal M}^{n+1}$:
    \begin{enumerate}[({ARMS}-1)]
    \item Trace forward in time breakpoints of $\partial{\cal M}^n$ at $t_n$ 
     to a sequence of points $(\bp_j)_{j=0}^{N}$ at $t_{n}+k$ 
     with $\bp_0=\bp_{N}$ by $\varphi_{\app}$. 
    \item If any chordal length $\|\bp_j-\bp_{j+1}\|_2$ 
     is greater than $h_L^*\coloneqq(1-r_{\Tiny})h_L$, 
     then
    \begin{enumerate}[(a)]
        \item locate $\overleftarrow{\bp_j}=(x(l_j), y(l_j))$ 
         and $\overleftarrow{\bp_{j+1}}=(x(l_{j+1}), y(l_{j+1}))$
         on $\partial{\cal M}^n(l)$
         as the preimages of $\bp_j$ and $\bp_{j+1}$,
        \item divide the parameter interval $[l_j, l_{j+1}]$ 
         into $\left\lceil\frac{\|\bp_j-\bp_{j+1}\|_2}{h_L^*}\right\rceil$ 
         equidistant subintervals, 
         compute the corresponding new markers on $\partial{\cal M}^n(l)$, 
         insert them in between 
         $\overleftarrow{\bp_j}$ and $\overleftarrow{\bp_{j+1}}$, and
        \item trace forward in time the new sequence of interface markers 
         on $\partial{\cal M}^n$ to their images at $t_n+k$
         by the flow map $\varphi_{\app}$.
    \end{enumerate}
    Repeat the above substeps until no chordal length is greater than $h_L^*$.
    \item Traverse the point sequence $(\bp_j)_{j=0}^N$ 
     to remove chords of negligible lengths: 
    \begin{enumerate}[(a)]
        \item locate a point $\bp_s$ satisfying
         $\|\bp_s-\bp_{s+1}\|_2\ge r_{\Tiny}h_L$ and set $j=s$; 
         set $j=0$ if such a point does not exist, 
        \item if $\|\bp_j-\bp_{j+1}\|_2<r_{\Tiny}h_L$, 
         keep removing $\bp_{j+1}$ from the point sequence 
         until $\|\bp_j-\bp_{j+1}\|_2\ge r_{\Tiny}h_L$ holds 
         for the new $\bp_{j+1}$, 
      \item increment $j$ by 1 and repeat {(ARMS-3b)}, 
      \item terminate after all pairs of adjacent markers have been checked.
    \end{enumerate}
    \item Construct a new set of splines $\partial{\cal M}^{n+1}$
      from the updated marker sequences 
      as the representation of $\partial{\cal M}(t_n+k)$.
    \end{enumerate}
\end{definition}

As shown in~\cite[Section 5]{Hu2024ARMS}, 
 for solving the mean curvature flow of a Jordan curve, 
 if the discrete flow map $\varphi_{\app}$ is derived using 
 an $r$-th order spatial discretization 
 and a $q$-th order ODE solver for time integration, 
 combined with an $r$-th order accurate periodic spline for curve fitting, 
 the overall IT error of the ARMS method is $E_1(t_n)=O(h_L^r) + O(k^q)$, 
 where $r=2,4,6$. 

In this work, the velocity field $\bv$ is given a priori, 
 which simplifies the IT problem compared to solving the mean curvature flow. 
By employing the classical fourth-order Runge-Kutta method 
 to solve the ODE \cref{eq:ODE_IT} as the discrete flow map $\varphi_{\app}$ 
 and periodic cubic splines for curve fitting scheme, 
 the overall IT error of the ARMS method becomes $E_1(t_n)=O(h_L^4) + O(k^4)$. 
The proof follows the same approach 
 as described in~\cite[Section 5]{Hu2024ARMS}. 

\subsection{Reynolds transport theorem}
We introduce the Reynolds transport theorem 
 to prepare the finite volume discretization 
 and analysis of the discontinuities cause by topological changes. 
\begin{theorem}[Reynolds transport theorem]
    \label{thm:RTT}
    Denote by $V(t)$ a connected moving fluid region 
     driven by a velocity field $\bv$, 
     $f(\bx, t)$ a $C^1$-continuous scalar function, 
     we have 
    \begin{equation}
        \label{eq:RTT}
        \dfrac{\dif}{\dif t}\iint_{V(t)} f(\bx, t)\,\dif \bx = 
         \iint_{V(t)}\dfrac{\partial f}{\partial t}(\bx, t)\,\dif \bx 
         + \oint_{\partial V(t)} f(\bx,t)\bv\cdot\bn\,\dif l,
    \end{equation}
     where $l$ is the arc-length parameter, 
     and $\bn$ is the unit outward normal vector.
\end{theorem}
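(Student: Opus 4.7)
The plan is to pull the time-dependent domain back to a fixed reference domain via the flow map $\phi$ from \cref{eq:Flow_map}, differentiate under the integral on the fixed domain, push forward again, and then convert the divergence term to a boundary integral via the planar divergence theorem. Fix a reference time $t_0$ and let $\mathbf{X}\in V(t_0)$, so that $\bx=\phi_{t_0}^{t-t_0}(\mathbf{X})$ parametrizes $V(t)$. Under the stated regularity of $\bv$, the map $\phi_{t_0}^{t-t_0}$ is a $C^{1}$-diffeomorphism with deformation gradient $F(\mathbf{X},t)\coloneqq\nabla_{\mathbf{X}}\phi_{t_0}^{t-t_0}(\mathbf{X})$ and positive Jacobian $J(\mathbf{X},t)\coloneqq\det F$, and the standard change of variables formula gives
\[
\iint_{V(t)} f(\bx,t)\,\dif\bx
 = \iint_{V(t_0)} f\bigl(\phi_{t_0}^{t-t_0}(\mathbf{X}),\,t\bigr)\,J(\mathbf{X},t)\,\dif\mathbf{X}.
\]

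Since $V(t_0)$ no longer depends on $t$ and the integrand is $C^{1}$ in $t$, differentiation commutes with the integral. Applying the chain rule together with \cref{eq:ODE_IT}, namely $\partial_{t}\phi_{t_0}^{t-t_0}(\mathbf{X})=\bv\bigl(\phi_{t_0}^{t-t_0}(\mathbf{X}),t\bigr)$, produces the material derivative of $f$, so that
\[
\frac{\dif}{\dif t}\iint_{V(t_0)} f\,J\,\dif\mathbf{X}
 = \iint_{V(t_0)}\bigl[(\partial_t f + \bv\cdot\nabla f)\,J + f\,\partial_t J\bigr]\,\dif\mathbf{X}.
\]
The key auxiliary identity is $\partial_t J = J\,(\nabla\cdot\bv)$, which follows from differentiating $\partial_{t}\phi=\bv(\phi,t)$ in $\mathbf{X}$ to obtain $\partial_t F=(\nabla\bv)\,F$ and invoking Jacobi's formula $\partial_t\det F=\det F\cdot\operatorname{tr}(F^{-1}\partial_t F)$. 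Substituting this identity and changing variables back to $\bx\in V(t)$ collapses the integrand to $\partial_t f+\nabla\cdot(f\bv)$. Splitting that sum and applying the planar divergence theorem to $\nabla\cdot(f\bv)$ produces the boundary flux $\oint_{\partial V(t)} f\,\bv\cdot\bn\,\dif l$, which is exactly \cref{eq:RTT}.

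The main obstacle is the Jacobian identity $\partial_t J=J(\nabla\cdot\bv)$; the rest of the argument is routine once the flow map is in place. A secondary technical point is that the divergence theorem requires $\partial V(t)$ to be at least Lipschitz. This is guaranteed here because $V(t)$ is the $C^{1}$-image of a region whose initial boundary is a finite union of oriented Jordan curves (a Yin set, in the context of this paper), so rectifiability is preserved for all $t$ under the stated regularity of $\bv$.
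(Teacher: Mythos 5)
Your proof is correct, and it is the standard Lagrangian (pullback) argument for the Reynolds transport theorem: change variables to the fixed reference domain via the flow map of \cref{eq:Flow_map}, differentiate under the integral, invoke Jacobi's formula $\partial_t J = J\,(\nabla\cdot\bv)$, change variables back, and convert $\nabla\cdot(f\bv)$ into the boundary flux by the planar divergence theorem. The paper itself offers no proof to compare against --- \cref{thm:RTT} is quoted as a classical prerequisite --- so there is no divergence of approach to discuss.

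One small caveat worth making explicit: the paper's standing hypothesis on $\bv$ (continuous in time, Lipschitz in space) suffices for the flow map to exist and be unique, but your Jacobi-formula step needs slightly more, namely $\bv$ of class $C^1$ in space, so that $\nabla\bv$ exists, $F$ is differentiable in $t$ with $\partial_t F = (\nabla\bv)F$, and $\nabla\cdot\bv$ is defined (with $C^1$ dependence of $\phi$ on initial data giving $J>0$). In the setting of this paper the prescribed boundary velocities are smooth, so the assumption is harmless, but it should be stated rather than absorbed silently into ``the stated regularity of $\bv$.'' Your remark on the rectifiability of $\partial V(t)$ is the right justification for applying the divergence theorem here.
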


\section{Algorithm}
\label{sec:Algorithm}
We embed the moving region $\Omega(t)$ inside a rectangular region 
 $\Omega_{R} \in \YIN_{c}$ 
 and divide $\Omega_{R}$ by structured rectangular grids 
 into control volumes or cells, 
\begin{equation*}
    \bC_{\bi} \coloneqq (\bi h, (\bi + \mathbbm{1})h),
\end{equation*}
 where $h$ is the uniform grid size, 
 $\bi \in \mathbb{Z}^2$ a multi-index, 
 and $\mathbbm{1} \in \mathbb{Z}^2$ the multi-index 
 with all components equal to one. 
The higher face of $\bC_{\bi}$ in dimension $d$ is 
\begin{equation*}
    \bF_{\hfIdx} \coloneqq ((\bi+\be^d)h, (\bi+\mathbbm{1})h),
\end{equation*}
 where $\be^d \in \mathbb{Z}^2$ is the multi-index 
 with $1$ in its $d$th component and $0$ in the other component. 
The lower-left corner node of $\bC_{\bi}$ is 
\begin{equation*}
    \bN_{\bi} \coloneqq \bi h.
\end{equation*}

For the moving region $\Omega(t)$, 
 the $\bi$th \emph{cut cell} at time $t$ is defined by 
 ${\cal C}_{\bi}(t) \coloneqq \bC_{\bi} \cap \Omega(t)$, 
 the higher \emph{cut face} of ${\cal C}_{\bi}(t)$ by 
 ${\cal F}_{\hfIdx}(t) \coloneqq \bF_{\hfIdx} \cap \Omega(t)$, 
 the $\bi$th \emph{cut boundary} by 
 ${\cal S}_{\bi}(t) \coloneqq \bC_{\bi} \cap \partial \Omega(t)$. 

A cut cell is classified as an \emph{empty cell}, 
 a \emph{pure cell}, or an \emph{interface cell} 
 if it satisfies ${\cal C}_{\bi} = \emptyset$, ${\cal C}_{\bi} = \bC_{\bi}$, 
 or otherwise, respectively. 

The averaged value of $\rho(\bx, t)$ over a nonempty cut cell ${\cal C}_{\bi}(t)$, 
 a nonempty cut face ${\cal F}_{\hfIdx}(t)$ and a cut boundary ${\cal S}_{\bi}(t)$ 
 are 
\begin{align*}
    \avg{\rho}_{\bi}(t) &\coloneqq 
     \frac{1}{\vol{{\cal C}_{\bi}(t)}}
     \iint_{{\cal C}_{\bi}(t)} \rho(\bx, t)\ \dif \bx; \\
    \avg{\rho}_{\hfIdx}(t) &\coloneqq 
     \frac{1}{\vol{{\cal F}_{\hfIdx}(t)}}
     \int_{{\cal F}_{\hfIdx}(t)} \rho(\bx, t)\ \dif l; \\
    \bbrk{\rho}_{\bi}(t) &\coloneqq 
     \frac{1}{\vol{{\cal S}_{\bi}(t)}} 
     \int_{{\cal S}_{\bi}(t)} \rho(\bx, t)\ \dif l,
\end{align*}
 respectively, 
 where $\vol{{\cal C}_{\bi}(t)}$ is the volume of ${\cal C}_{\bi}(t)$, 
 and $\vol{{\cal F}_{\hfIdx}(t)}$, $\vol{{\cal S}_{\bi}(t)}$ are the length of 
 ${\cal F}_{\hfIdx}(t)$ and ${\cal S}_{\bi}(t)$, respectively. 

In this study, 
 we employ a staggered Cartesian grid. 
The scalar field $\rho$ is stored as cell-averaged values, 
 while the velocity field $\bu$ is stored as face-averaged values 
 \footnote{Although the advection velocity $\bu$ 
  is given as a known function in this problem, 
  we still stored it as face-averaged values 
  to facilitate future coupling with a main flow solver.}
 (see \cref{fig:staggered_grid}).

The time interval $[0,T]$ is partitioned into $N_{T}$ subintervals 
 with a uniform time step size $k=\frac{T}{N_{T}}$ 
 so that $t_{n}\coloneqq n k$ for $n=0,\ldots,N_{T}$. 

\begin{figure}[htbp]
    \vspace{-15pt}
    \centering
    \begin{tikzpicture}[scale=1.5]
        \filldraw[gray!50]
            (0,0) arc (120:90:4) -- (2,2) -- (0,2) -- cycle;
        \draw[line width=1pt]
            (0,0) arc (120:90:4) -- (2,2) -- (0,2) -- cycle;
        
        \node at (1,1.1) [scale=1.2]
            {$\avg{\rho}_\bi$};

        \filldraw (2,1.268) circle (.05);
        \draw [->] (2,1.268) -- (2.4,1.268);
        \node at (2.4,1.268) [right] 
            {$\avg{u_1}_{\bi+\frac{1}{2}\be^1}$};

        \filldraw (0,1) circle (.05);
        \draw [->] (0,1) -- (0.4,1);
        \node at (0,1) [left] 
            {$\avg{u_1}_{\bi-\frac{1}{2}\be^1}$};

        \filldraw (1,2) circle (.05);
        \draw [->] (1,2) -- (1,2.4);
        \node at (1,2.4) [above] 
            {$\avg{u_2}_{\bi+\frac{1}{2}\be^2}$};
    \end{tikzpicture}
    \caption{A sketch of a cut cell in the staggered grid. 
        $\rho$ is stored as cell-averaged values, 
        and different components of velocity $\bu$ 
        are stored as face-averaged values in different directions.}
    \label{fig:staggered_grid}
    \vspace{-20pt}
\end{figure}
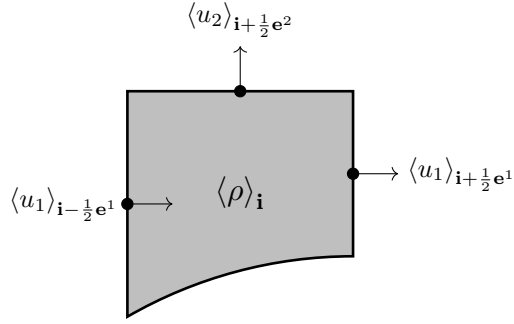

\subsection{Finite volume discretization}
Integrating \cref{eq:AdvDiff} over each cut cell ${\cal C}_{\bi}(t)$ yields
\begin{equation}
    \label{eq:AdvDiff_Integral}
    \iint_{{\cal C}_{\bi}(t)} \frac{\partial \rho}{\partial t}\ \dif \bx = 
     \iint_{{\cal C}_{\bi}(t)} \left(
        -\bu \cdot \nabla\rho + \frac{1}{\mathrm{Pe}}\Delta\rho
     \right)\ \dif \bx.
\end{equation}
Because cut cells are time-dependent, 
 using cell-averaged values as time evolution variables 
 is no longer appropriate. 
Instead, we use cell-integral values $\iint_{{\cal C}_{\bi}(t)}\rho\ \dif \bx$, 
 i.e., $\vol{{\cal C}_{\bi}(t)} \avg{\rho}_{\bi}$ 
 as time evolution variables. 
Applying \cref{thm:RTT} to \cref{eq:AdvDiff_Integral}, 
 we obtain
\begin{equation}
    \label{eq:AdvDiff_Reynolds}
    \begin{split}
    \frac{\dif}{\dif t}\iint_{{\cal C}_{\bi}(t)}\rho\ \dif \bx 
     &= \iint_{{\cal C}_{\bi}(t)}\left(
            -\bu \cdot \nabla\rho + \frac{1}{\mathrm{Pe}}\Delta\rho
        \right)\ \dif \bx + 
     \oint_{\partial{\cal C}_{\bi}(t)}\rho\bar{\bv}\cdot\bn\ \dif l \\
     &= \iint_{{\cal C}_{\bi}(t)}\left(
            -\bu \cdot \nabla\rho + \frac{1}{\mathrm{Pe}}\Delta\rho
        \right)\ \dif \bx + 
     \int_{\partial{\cal S}_{\bi}(t)}\rho\bv\cdot\bn\ \dif l,
    \end{split}
\end{equation}
 where $\bar{\bv}$ is the moving velocity of $\partial {\cal C}_{\bi}(t)$, 
 the second equality holds because $\bar{\bv}\cdot\bn=0$ on cut faces 
 and $\bar{\bv}=\bv$ on the cut boundary. 
Then, the finite volume discretization of \cref{eq:AdvDiff} is given by 
\begin{equation}
    \label{eq:AdvDiff_Finite_Volume}
    \begin{split}
    \frac{\dif}{\dif t}\left(\vol{{\cal C}_{\bi}(t)}\avg{\rho}_{\bi}\right) 
    =& \vol{{\cal C}_{\bi}(t)}\left(
        - \avg{\eadv(\rho,\bu)}_{\bi} 
        + \frac{1}{\mathrm{Pe}}\avg{\elap(\rho)}_{\bi}
     \right) + 
     \mathcal{L}_{\mathrm{net}}(\bv,\rho_{\mathrm{bc}})_{\bi} \\
    \approx& \vol{{\cal C}_{\bi}(t)}\left(
        - \adv\left(\avg{\rho},\avg{\bu}\right)_{\bi} 
        + \frac{1}{\mathrm{Pe}}
         \lap\left(\avg{\rho}\right)_{\bi}
     \right) \\
     &+ \mathbf{L}_{\mathrm{net}}\left(
        \bv, \rho_{\mathrm{bc}}
     \right)_{\bi},
    \end{split}
\end{equation}
 where $\eadv$ is the advection term and 
 $\adv$ computes its fourth-order accurate 
 cell-averaged approximation;
 $\elap$ is the Laplacian term and 
 $\lap$ computes its fourth-order accurate 
 cell-averaged approximation; 
 $\mathcal{L}_{\mathrm{net}}$ is the net transportation term and 
 $\mathbf{L}_{\mathrm{net}}$ computes its approximation by Simpson's rule.

\subsection{Merging algorithm}
\label{sec:Merging_Alg}
As discussed in \Cref{sec:Introduction}, 
 topological changes in cut cells will introduce discontinuities 
 in the time evolution variables. 
In this subsection, 
 we analyze the reason for these discontinuities in more detail 
 and develope an algorithm to address them.

By differentiating \cref{eq:RTT} recursively, 
 we obtain the following corollary. 
 
\begin{corollary}
    \label{cor:RTT_continuity}
    Denote by $\gamma(s,t),\,s\in[0,1]$ the parametric representation 
     of $\partial V(t)$. 
    If both $f$ and $\mathbf{v}$ are smooth, 
     then $\iint_{V(t)}f(\bx, t)\,\dif \bx$ is $C^{\alpha}$ in time 
     if and only if $\gamma(s,t)$ is $C^{\alpha-1}$ in time.
\end{corollary}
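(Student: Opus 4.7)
The plan is to apply \cref{thm:RTT} recursively to the quantity $F(t)\coloneqq\iint_{V(t)}f(\bx,t)\,\dif\bx$, tracking at each stage which $t$-derivatives of $\gamma$ enter the expression for $F^{(\alpha)}$. The starting observation is that, using the parameterization, the boundary integral in \cref{eq:RTT} can be rewritten as
\begin{equation*}
  \oint_{\partial V(t)}f\bv\cdot\bn\,\dif l
  =\int_0^1(f\bv)(\gamma(s,t),t)\cdot(\partial_s\gamma(s,t))^{\perp}\,\dif s,
\end{equation*}
a fixed-domain line integral whose $t$-regularity equals that of $\gamma$, since $f$ and $\bv$ are smooth and $\partial_s$ commutes with $\partial_t$.

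I would then proceed by induction on $\alpha\geq 1$. In the inductive step, \cref{thm:RTT} decomposes $F'$ into the volume integral $A_1(t)\coloneqq\iint_{V(t)}\partial_t f\,\dif\bx$ and the line integral $B_1(t)$ above. Applying the induction hypothesis to $A_1$, with $\partial_t f$ in place of $f$, yields $A_1\in C^{\alpha-1}$ in $t$ iff $\gamma\in C^{\alpha-2}$ in $t$; the observation above yields $B_1\in C^{\alpha-1}$ in $t$ iff $\gamma\in C^{\alpha-1}$ in $t$. For sufficiency, if $\gamma\in C^{\alpha-1}$ in $t$ then both $A_1$ and $B_1$ lie in $C^{\alpha-1}$, so $F'\in C^{\alpha-1}$, i.e., $F\in C^{\alpha}$ in $t$. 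For necessity, if $F\in C^\alpha$ in $t$ then in particular $F\in C^{\alpha-1}$, so by induction $\gamma\in C^{\alpha-2}$, which forces $A_1\in C^{\alpha-1}$; hence $B_1=F'-A_1\in C^{\alpha-1}$, and the $B_1$ equivalence upgrades this to $\gamma\in C^{\alpha-1}$ in $t$.

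The main obstacle is the necessity direction, namely establishing that $C^{\alpha-1}$ regularity of the line integral $B_1$ genuinely forces $\gamma\in C^{\alpha-1}$ pointwise, rather than only some averaged regularity along $[0,1]$. I would address this by exploiting the arbitrariness of the smooth functions $f$ and $\bv$: by localizing $f\bv$ around a prescribed point $\gamma(s_0,t)$ and taking suitable test choices, $B_1$ can probe $\gamma(s_0,\cdot)$ individually, so its $t$-regularity cannot exceed that of $\gamma$ at any single point. A secondary subtlety is the base case $\alpha=1$, which must be handled separately because \cref{thm:RTT} presupposes enough boundary regularity to even state the formula; here a direct difference-quotient analysis of $F(t)-F(t_0)$ via the symmetric difference $V(t)\triangle V(t_0)$, whose area is controlled by $\|\gamma(\cdot,t)-\gamma(\cdot,t_0)\|_\infty$ times the arc length, yields the equivalence between $C^0$ regularity of $F$ and of $\gamma$ and bootstraps into the inductive machinery above.
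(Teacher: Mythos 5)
Your proposal follows essentially the same route as the paper's proof: recursive application of \cref{thm:RTT}, together with rewriting the boundary term as a fixed-domain integral over $s\in[0,1]$ whose integrand is a smooth function of $\gamma$ and $\partial_s\gamma$, so that its temporal regularity is identified with that of $\gamma$; your induction on $\alpha$ is just the unrolled form of the paper's ``keep differentiating'' argument. The only substantive difference is that you explicitly flag and attempt to patch the necessity direction of the boundary-integral claim (by localizing $f\bv$), a step the paper simply asserts; note that $f$ and $\bv$ are fixed data in the statement, so that patch amounts to reading the claim for generic data rather than filling a gap the paper itself addresses.
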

\begin{proof}
    First, we differentiate \cref{eq:RTT} again and have 
    \begin{equation}
        \frac{\dif^2}{\dif t^2}\iint_{V(t)}f\,\dif \bx = 
         \iint_{V(t)}\frac{\partial^2 f}{\partial t^2}\,\dif \bx 
         + \oint_{\partial V(t)} 
            \frac{\partial f}{\partial t}\bv\cdot\bn\,\dif l 
         + \frac{\dif}{\dif t}\oint_{\partial V(t)}f\bv\cdot\bn\,\dif l.
    \end{equation}
    So $\iint_{V(t)}f\,\dif \bx$ is $C^{2}$ in time 
     if and only if $\oint_{\partial V(t)}f\bv\cdot\bn\,\dif l$ 
     is $C^{1}$ in time. 
    If we keep differentiating it, 
     we will find that 
     $\iint_{V(t)}f\,\dif \bx$ is $C^\alpha$ in time 
     if and only if $\oint_{\partial V(t)}
     \frac{\partial^{j} f}{\partial t^{j}}\bv\cdot\bn\,\dif l$ 
     is $C^{\alpha-1-j}$ in time for $j=0,\ldots,\alpha-1$.
    
    By the parametric representation of $\partial V(t)$, 
     the integral can be rewritten as 
    \begin{equation}
        \oint_{\partial V(t)}
         \dfrac{\partial^{j} f}{\partial t^{j}}\bv\cdot\bn\,\dif l = 
         \int_{0}^{1} \dfrac{\partial^{j} f}{\partial t^{j}}(\gamma(s,t),t)\,
         \bv(\gamma(s,t),t) \cdot \bn(s,t)
         \dfrac{\partial l}{\partial s}(s,t)\,\dif s,
    \end{equation}
    where $\bn(s,t)$ and $l(s,t)$ are smooth functions of 
     $\frac{\partial \gamma}{\partial s}$. 
    So $\oint_{\partial V(t)}
     \dfrac{\partial^{j} f}{\partial t^{j}}\bv\cdot\bn\,\dif l$ 
     is $C^{\alpha-1-j}$ in time 
     if and only if $\gamma(s,t)$ is $C^{\alpha-1-j}$ in time. 
    Therefore, $\iint_{V(t)}f\,\dif \bx$ is $C^\alpha$ in time 
     if and only if $\gamma(s,t)$ is $C^{\alpha-1}$ in time.
\end{proof}

Since only the cut boundary is contributive in \cref{eq:AdvDiff_Reynolds}, 
 \cref{cor:RTT_continuity} indicates that 
 the discontinuity of cell-integral values 
 lies in the discontinuity of ${\cal S}_{\bi}(t)$. 
If ${\cal S}_{\bi}(t)$ sweeps across a corner node of the cell $\bC_{\bi}$ 
 during a time step as shown in \cref{fig:sweep_corner}, 
 then it is at most $C^0$ in time, 
 thus $\|{\cal C}_{\bi}(t)\|\avg{\rho}_{\bi}$ 
 is at most $C^{1}$ in time, 
 which limits the cut-cell method to second-order accuracy. 

\begin{figure}[htbp]
    \centering
    \begin{tikzpicture}[scale=1.2]
    \draw[line width=1pt] 
        (0,0) rectangle (2,2);
    \filldraw[fill=gray!50]
        (0,0) -- (0.8,0) .. controls (0.5,0.8) .. (0,1.5) -- cycle;
    \draw[line width=2pt]
        (0.8,0) .. controls (0.5,0.8) .. (0,1.5);
    \node at (0.5,0.8) [right]{${\cal S}_{\bi}(t_{n})$};
    \node at (2,1.7) [left]{$\bC_{\bi}$};

    \draw[-{To[length=2mm,width=1.5mm]},line width=1pt]
       (2.5,1.0) -- (3.5,1.0);

    \draw[line width=1pt] 
        (4,0) -- (6,0) -- (6,2) -- (4, 2) -- cycle;
    \filldraw[fill=gray!50]
        (4,0) -- (5.2,0) .. controls (4.8,1) .. (4,2) -- cycle;
    \draw[line width=2pt]
        (5.2,0) .. controls (4.8,1) .. (4,2);
    \node at (4.8,1) [right]{${\cal S}_{\bi}(\tau)$};
    \node at (6,1.7) [left]{$\bC_{\bi}$};

    \draw[-{To[length=2mm,width=1.5mm]},line width=1pt]
       (6.5,1.0) -- (7.5,1.0);

    \draw[line width=1pt] 
        (8,0) -- (10,0) -- (10,2) -- (8, 2) -- cycle;
    \filldraw[fill=gray!50]
        (8,0) -- (9.6,0) .. controls (9,1.2) .. (8.2,2) -- (8,2) -- cycle;
    \draw[line width=2pt]
    (9.6,0) .. controls (9,1.2) .. (8.2,2);
    \node at (9.2,0.8) [left]{${\cal S}_{\bi}(t_{n+1})$};
    \node at (10,1.7) [left]{$\bC_{\bi}$};

    \end{tikzpicture}
    \caption{Example of ${\cal S}_{\bi}(t)$ sweeping a corner of $\bC_{\bi}$, 
     where ${\cal S}_{\bi}(t)$ are the thick lines 
     and ${\cal C}_{\bi}(t)$ are the regions colored by light gray.}
    \label{fig:sweep_corner}
\end{figure}
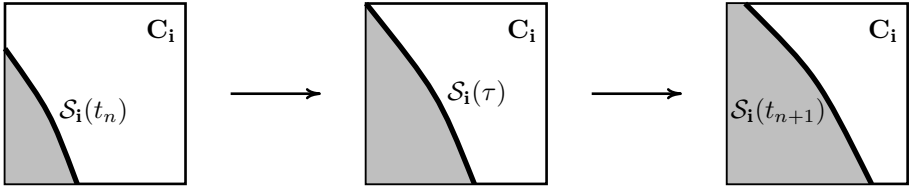

Another situation arises when the boundary enters or exits a cell 
 without sweeping across any corner nodes, 
 as depicted in \cref{fig:bdry_enter}. 
This also creates a discontinuity in $\vol{{\cal C}_{\bi}(t)}\avg{\rho}_{\bi}$. 

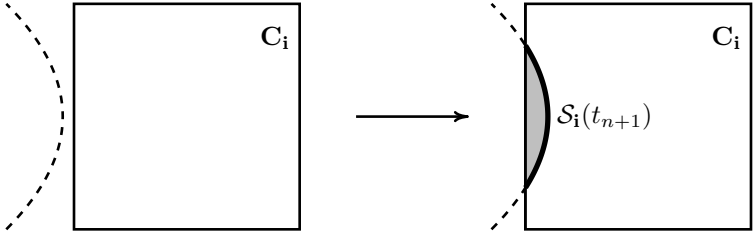
\begin{figure}[htbp]
    \centering
    \begin{tikzpicture}[scale=1.5]
    \draw[line width=1pt] 
        (0,0) -- (2,0) -- (2,2) -- (0, 2) -- cycle;
    \draw[line width=1pt,dashed,domain=0:2,smooth,variable=\y]
        plot ({- 0.1 - 0.5*(\y-1)^2},\y);
    \node at (2,1.7) [left]{$\bC_{\bi}$};

    \draw[-{To[length=2mm,width=1.5mm]},line width=1pt]
       (2.5,1.0) -- (3.5,1.0);

    \draw[line width=1pt] 
        (4,0) -- (6,0) -- (6,2) -- (4, 2) -- cycle;
    \draw[name path = A, line width=1pt,dashed,domain=0:2,smooth,variable=\y]
        plot ({4.2 - 0.5*(\y-1)^2},\y);
    \draw[line width=2pt,domain=0.37:1.63,smooth,variable=\y]
        plot ({4.2 - 0.5*(\y-1)^2},\y);
    \draw[name path = B, line width=1pt] 
        (4,0) -- (4,2);
    \begin{pgfonlayer}{bg}
        \fill [gray!50,
            intersection segments={
                of=A and B,
                sequence={L2--R2}
            }];
    \end{pgfonlayer}
    \node at (4.2,1) [right]{${\cal S}_{\bi}(t_{n+1})$};
    \node at (6,1.7) [left]{$\bC_{\bi}$};

    \end{tikzpicture}
    \caption{Example of the boundary entering a cell $\bC_{\bi}$ in a time step 
     without sweeping across any corner nodes.}
    \label{fig:bdry_enter}
\end{figure}

In this work, 
 we introduce a merging technique to address these problems 
 caused by boundaries movement. 
The key issue lies in properly handling nodes and faces 
 whose attributes change during a time step 
 (from being inside the fluid region to being outside the fluid region 
 or vice-versa). 

\begin{enumerate}[({TMA}-1)]
    \item For each node $\bN_{\bi}$ whose attribute changes 
     during the time step $[t_{n},t_{n+1}]$, 
     denote by $t=t_{n}\ \mathrm{or}\ t_{n+1}$ the time 
     at which $\bN_{\bi}$ is an interior node. 
    We first find the smallest cut face adjacent to $\bN_{\bi}$, 
     for example, ${\cal F}_{\bi-\frac{1}{2}\be^{1}-\be^{2}}(t)$. 
    We then merge $\bF_{\bi-\frac{1}{2}\be^{1}-\be^{2}}$ 
     and $\bF_{\bi-\frac{1}{2}\be^{1}}$ to form a merged face, 
     the corresponding cut faces at $t_{n}$ and $t_{n+1}$ are also merged. 
    In addition, 
     the cells on both sides of the merged face also need to be merged 
     (merge $\bC_{\bi}$ and $\bC_{\bi-\be^{2}}$, 
      merge $\bC_{\bi-\be^{1}}$ and $\bC_{\bi-\mathbbm{1}}$),
     as shown in \cref{fig:merge_sweep_corner}. 
    This process prevent $\bN_{\bi}$ from being a corner 
     of any merged cut cell, 
     thereby addressing the situation in \cref{fig:sweep_corner};
    \item For any unmerged faces $\bF_{\hfIdx}$ whose attribute changes 
     during the time step $[t_n,t_{n+1}]$, 
     we simply merge the cells $\bC_{\bi}$ and $\bC_{\bi+\be^{d}}$ 
     to form a merged cell, as depicted in \cref{fig:merge_bdry_enter}. 
    This ensures that $\bF_{\hfIdx}$ is no longer a face 
     of any merged cut cell, 
     addressing the situation in \cref{fig:bdry_enter}.
\end{enumerate}

\begin{figure}[htbp]
    \centering
    \begin{tikzpicture}[scale=0.95]

    % t_n
    \coordinate (A1) at (-1.8, -1.2);
    \coordinate (B1) at (-1, -0.2);
    \coordinate (C1) at (1, 1.2);
    \coordinate (D1) at (2.2, 1.8);

    \filldraw[gray!50] 
        (A1) plot[smooth, tension = 1] coordinates 
        {(A1) (B1) (C1) (D1)} -- (D1) -- (2.2,2.2) 
        -- (-2.2,2.2) -- (-2.2,-1.2) -- cycle;

    \draw[help lines, black, line width = .1pt] 
        (-2.2, -1.2) grid (2.2, 2.2);

    \begin{scope}
        \clip
            (A1) plot[smooth, tension = 1] coordinates 
            {(A1) (B1) (C1) (D1)} -- (D1) -- (2.2,2.2) 
            -- (-3.2,2.2) -- (-3.2,-2.2) -- cycle;
        \filldraw[fill=gray, line width=.1pt] 
            (0,0) rectangle (1,2);
    \end{scope}
    %\node at (0.5,1.4) [scale=1.2] {${\cal C}_{\bi}$};

    \begin{scope}
        \clip
            (A1) plot[smooth, tension = 1] coordinates 
            {(A1) (B1) (C1) (D1)} -- (D1) -- (2.2,2.2) 
            -- (-3.2,2.2) -- (-3.2,-2.2) -- cycle;
        \filldraw[fill=gray, line width=.1pt] 
            (1,0) rectangle (2,2);
    \end{scope}
    %\node at (1.5,1.75) [scale=1.2] {${\cal C}_{\bj}$};

    \node at (1,1) [rectangle, draw=black, fill=black, 
        inner sep=0pt, minimum size=3pt] {};
    \node at (1,0.7) [right] {$\bN_{\bi}$};

    \draw[line width=1pt] 
        (A1) plot[smooth, tension = 1] coordinates 
        {(A1) (B1) (C1) (D1)};
    \node at (D1) [right] {$\partial\Omega(t_n)$};

    \draw[-{To[length=2mm,width=1.5mm]},line width=1pt]
        (2.5,0.5) -- (3.5,0.5);

    % t_{n+1}
    \coordinate (A2) at (4.8, -1.2);
    \coordinate (B2) at (5.5, -0.2);
    \coordinate (C2) at (7, 0.8);
    \coordinate (D2) at (8.2, 1.2);

    \filldraw[gray!50] 
        (A2) plot[smooth, tension = 1] coordinates 
        {(A2) (B2) (C2) (D2)} -- (D2) -- (8.2,2.2) 
        -- (3.8,2.2) -- (3.8,-1.2) -- cycle;

    \draw[help lines, black, line width = .1pt] 
        (3.8, -1.2) grid (8.2, 2.2);

    \begin{scope}
        \clip
            (A2) plot[smooth, tension = 1] coordinates 
            {(A2) (B2) (C2) (D2)} -- (D2) -- (8.2,2.2) 
            -- (3.8,2.2) -- (3.8,-1.2) -- cycle;
        \filldraw[fill=gray, line width=.1pt] 
            (6,0) rectangle (7,2);
    \end{scope}
    %\node at (6.5,1.1) [scale=1.2] {${\cal C}_{\bi}$};

    \begin{scope}
        \clip
            (A2) plot[smooth, tension = 1] coordinates 
            {(A2) (B2) (C2) (D2)} -- (D2) -- (8.2,2.2) 
            -- (3.8,2.2) -- (3.8,-1.2) -- cycle;
        \filldraw[fill=gray, line width=.1pt] 
            (7,0) rectangle (8,2);
    \end{scope}
    %\node at (7.5,1.4) [scale=1.2] {${\cal C}_{\bj}$};

    \node at (7,1) [rectangle, draw=black, fill=black, 
        inner sep=0pt, minimum size=3pt] {};

    \draw[line width=1pt] 
        (A2) plot[smooth, tension = 1] coordinates 
        {(A2) (B2) (C2) (D2)};
    \node at (D2) [right] {$\partial\Omega(t_{n+1})$};
    \end{tikzpicture}
    \vspace{-30pt}
    \caption{The attribute of the node $\bN_{\bi}$ 
     changes during the time step $[t_n,t_{n+1}]$, 
     so the adjacent cells are merged to form the dark gray cells 
     and the node is no longer a corner node of both merged cells.}
    \label{fig:merge_sweep_corner}
\end{figure}
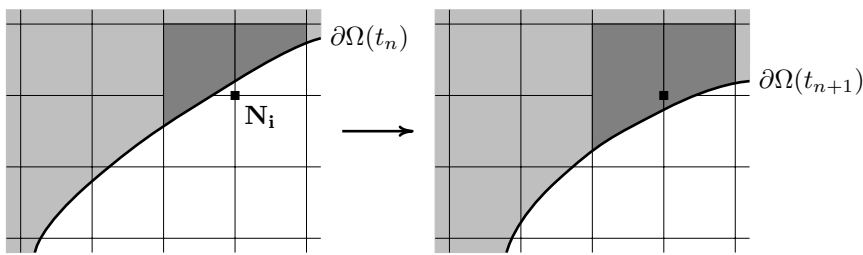

\begin{figure}[htbp]
    \centering
    \begin{tikzpicture}[scale=1.5]

    % t_n
    \filldraw[gray!50] 
        (-2.6,-1.2) arc (-30:30:3.4) -- (-3.2,2.2) -- (-3.2,-1.2) 
        -- cycle;

    \draw[help lines, black, line width = .1pt] 
        (-3.2, -1.2) grid (-0.8, 2.2);

    \begin{scope}
        \clip
            (-2.6,-1.2) arc (-30:30:3.4) -- (-3.2,2.2) -- (-3.2,-1.2) 
            -- cycle;
        \filldraw[fill=gray, line width=.1pt] 
            (-3,0) rectangle (-1,1);
    \end{scope}
    \node at (-2,0.5) [right] {$\bF_{\bi+\frac{1}{2}\be^1}$};

    \draw[line width=.5pt] 
        (-2.6,-1.2) arc (-30:30:3.4);
    \node at (-2.6,2.2) [above] {$\partial\Omega(t_n)$};

    \draw[-{To[length=2mm,width=1.5mm]},line width=1pt]
        (-0.5,0.5) -- (0.5,0.5);

    % t_{n+1}
    \filldraw[gray!50] 
        (1.575,-1.2) arc (-30:30:3.4) -- (0.8,2.2) -- (0.8,-1.2) 
        -- cycle;

    \draw[help lines, black, line width = .1pt] 
        (0.8, -1.2) grid (3.2, 2.2);

    \begin{scope}
        \clip
            (1.575,-1.2) arc (-30:30:3.4) -- (0.8,2.2) -- (0.8,-1.2) 
            -- cycle;
        \filldraw[fill=gray, line width=.1pt] 
            (1,0) rectangle (3,1);
    \end{scope}
    %\node at (1.6,0.5) [scale=1.2] {${\cal C}_\bk$};

    \draw[line width=.5pt] 
        (1.575,-1.2) arc (-30:30:3.4);
    \node at (1.575,2.2) [above] {$\partial\Omega(t_{n+1})$};

    %\draw[line width=.5pt, red, dashed](2,0) -- (2, 1);
    \end{tikzpicture}
    \vspace{-10pt}
    \caption{The attribute of the face $\bF_{\bi+\frac{1}{2}be^1}$ 
     changes during the time step $[t_n,t_{n+1}]$, 
     so that adjacent cells are merged to form the dark gray cell 
     and the face is no longer a face of any cell.}
    \label{fig:merge_bdry_enter}
\end{figure}
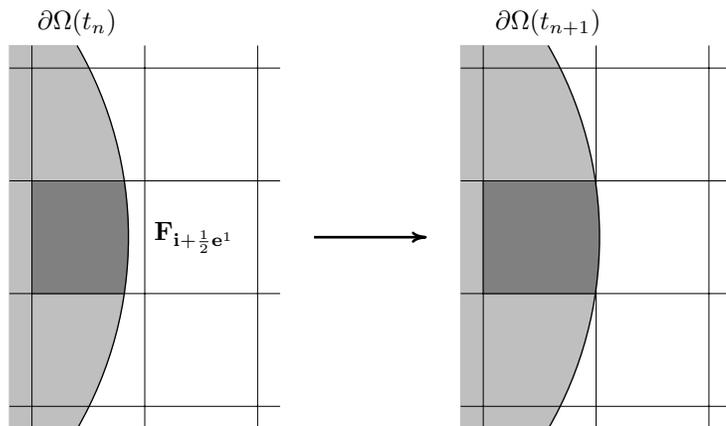

This cell-merging technique ensure that 
 corner nodes and faces undergoing attribute changes no longer exist, 
 so the temporal continuities of $\vol{{\cal C}_{\bi}(t)}\avg{\rho}_{\bi}$ 
 will not be disrupted.

Additionally, 
 cut cell or face can be arbitrarily small in practice, 
 leading to ill-conditioning operators. 
Furthermore, the CFL stability conditions require a very small time step size 
 for these small cells. 
To address these problems, 
 our merging technique also processes these 
 small cells and faces through the following steps:
\begin{enumerate}[({SMA}-1)]
    \item For any unmerged cell $\bC_{\bi}$ satisfying 
     $\vol{{\cal C}_{\bi}(t_{n})},\vol{{\cal C}_{\bi}(t_{n+1})} > 0$ and 
     $\vol{{\cal C}_{\bi}(t_{n})}\ \mathrm{or}\ \vol{{\cal C}_{\bi}(t_{n+1})}
      \leq \epsilon_{c} h^2$, 
     we locate the adjacent cell $\bC_{\bj}$ 
     that shares the longest common cut face with $\bC_{\bi}$ 
     at time $t_{n}\ \mathrm{or}\ t_{n+1}$, 
     and merge $\bC_{\bi}$ and $\bC_{\bj}$ to form a merged cell; 
    \item For any unmerged face $\bF_{\hfIdx}$ satisfying 
     $\vol{{\cal F}_{\hfIdx}(t_{n})},\vol{{\cal F}_{\hfIdx}(t_{n+1})}>0$ and 
     $\vol{{\cal F}_{\hfIdx}(t_{n})}\ \mathrm{or} \vol{{\cal F}_{\hfIdx}(t_{n+1})}
      \leq\epsilon_{f} h$. 
    We first attempt to merge it with an adjacent face. 
    Additionally, if two faces are merged, 
     the cells on both sides of the merged face also need to be merged.
    If no suitable adjacent face is found, 
     we then simply merge the two cells on both sides of $\bF_{\hfIdx}$.
\end{enumerate}
Here, $\epsilon_{c}$ and $\epsilon_{f}$ are user-defined parameters, 
 which are usually set to be $\epsilon=\epsilon_{c}=\epsilon_{f}\in[0.1, 0.2]$.

\subsection{Discrete operators}
Benefiting from the Cartesian grid, 
 we can apply standard discretization formula 
 to compute $\lap$ and $\adv$ 
 on cut cells far from boundaries. 
For cut cells near boundaries, 
 the standard discretization formula is no longer applicable, 
 since part of the cut cells in the stencil are interface cells or empty cells. 
In this case, 
 we employ the PLG algorithm~\cite{Zhang2024PLG} 
 to generate a stencil that conforms to the local geometry, 
 and use polynomial interpolation technique 
 to obtain the discretization formula. 

\subsubsection{Discrete Laplacian operator}
The fourth-order standard discretization formula 
 for $\lap$ is 
\begin{equation}
    \label{eq:Regular_lap}
    \begin{split}
    \lap\left(\avg{\rho}\right)_{\bi} 
    &\coloneqq \frac{1}{12h^{2}}\sum_{d}\left( 
        \avg{\rho}_{\bi+2\be^{d}} + 16\avg{\rho}_{\bi+\be^{d}} 
        - 30\avg{\rho}_{\bi} + 16\avg{\rho}_{\bi-\be^{d}} 
        + \avg{\rho}_{\bi-2\be^{d}}
     \right) \\
    &= \avg{\elap(\rho)}_\bi + O(h^4),
    \end{split}
\end{equation}
 where the $O(h^4)$ error is by Taylor expansion. 

For cut cell ${\cal C}_{\bi}$ near boundaries, we first introduce the LIP. 
\begin{definition}[Lagrange interpolation problem (LIP)]
    Denote by $\Pi_{q}$ the linear space of all bi-variate polynomials 
     of degree no greater than $q$ with real coefficients 
     and write $N\coloneqq \mathrm{dim}\Pi_{q}$. 
    For a finite set of pairwise disjoint cut cells 
     $\mathcal{T} \coloneqq \{{\cal C}_{\bi_{1}},\ldots,{\cal C}_{\bi_{N}}\}$ 
     and a corresponding set of cell-averaged values 
     $f_{1},\ldots,f_{N}\in\mathbb{R}$. 
    The \emph{LIP} seeks a polynomial $f\in\Pi_{q}$ such that 
    \begin{equation}
        \label{eq:LIP}
        \forall j=1,\ldots,N, \quad \avg{f}_{\bi_{j}}=f_{j}.
    \end{equation}
\end{definition}

A LIP is \emph{unisolvent} if for \emph{any} given sequence 
 of cell-averaged values $(f_{j})_{j=1}^{N}$ 
 there exists $f\in\Pi_{q}$ satisfying \cref{eq:LIP}; 
 then we also call the set $\mathcal{T}$ a \emph{poised lattice}. 

In this work, we employ an AI-aided algorithm in~\cite{Zhang2024PLG} 
 to find a poised lattice for ${\cal C}_{\bi}$ 
 that conforms to the local geometry. 
The lattice should ideally be centered at ${\cal C}_{\bi}$ 
 to align with the physical characteristics of the Laplacian operator, 
 see \cref{fig:Stencil4Lap} for an example. 
Additionally, for an interface cell, 
 its cut boundary ${\cal S}_{\bi}$ is also added into the generated lattice 
 to incorporate the boundary condition. 

\begin{figure}[htbp]
    \vspace{-10pt}
    \centering
    \includegraphics[width = 1.0\linewidth]{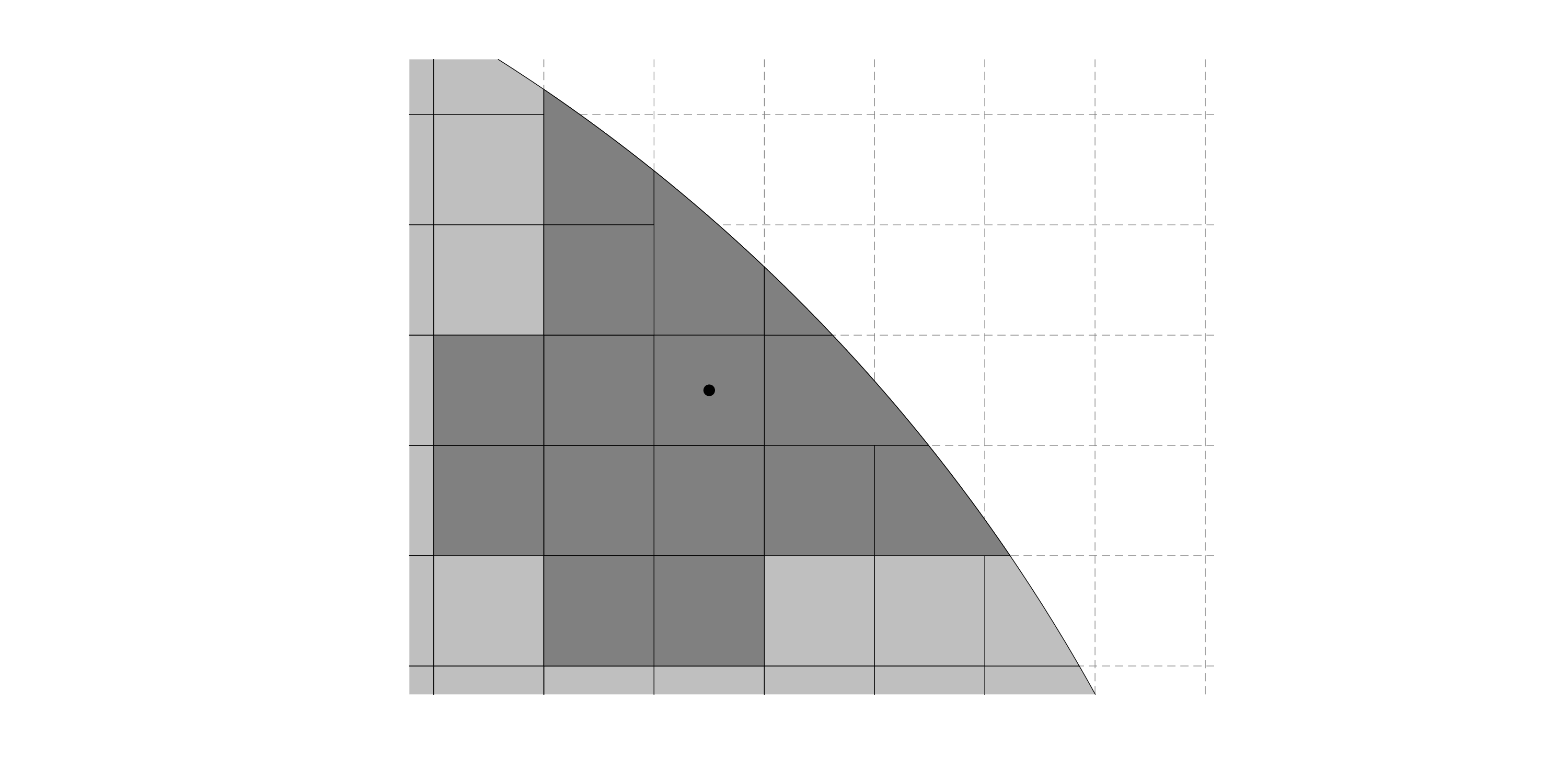}
    \vspace{-30pt}
    \caption{An example of poised lattice for $\lap$. 
     The target cut cell ${\cal C}_{\bi}$ is marked by a black point 
     and the lattice consists of cut cells colored by dark gray.}
    \vspace{-15pt}
    \label{fig:Stencil4Lap}
\end{figure}

For an interface cell ${\cal C}_{\bi}$, we define 
\begin{equation}
    \label{eq:bold_rho}
    \boldsymbol{\rho} = \left[
        \avg{\rho}_{\bi_{1}}, \ldots, \avg{\rho}_{\bi_{N}}, \bbrk{\rho}_{\bi}
     \right]^{\top} \in \bbR^{N+1}.
\end{equation}
After finding a poised lattice, 
 traditional way is to solve the LIP 
 and get a locally $q$th-order polynomial by $\boldsymbol{\rho}$, 
 therefore $\lap(\avg{\rho})_{\bi}$ can be calculated 
 by imposing $\elap$ to the polynomial 
 and computing the cell-averaged value on ${\cal C}_{\bi}$. 
Here, however, 
 we directly calculate a linear combination of $\boldsymbol{\rho}$ 
 as $\lap(\avg{\rho})_{\bi}$, i.e., 
\begin{equation}
    \label{eq:PLG4Lap}
    \avg{\elap(\rho)}_{\bi} 
     = \boldsymbol{\beta}^{\top}\boldsymbol{\rho} + O(h^{q-1})
     = \sum_{j=1}^{N}\beta_{j}\avg{\rho}_{\bi_{j}} + \beta_{b}\bbrk{\rho}_{\bi}
       + O(h^{q-1})
\end{equation}
 holds\footnote{
    The error $O(h^{q-1})$ in \cref{eq:PLG4Lap} arises 
    because we aim to fit a $q$th order polynomial, 
    and $\elap$ is a second order differential operator.}
 for any sufficiently smooth function $\rho:\bbR^{2}\to\bbR$, 
 where $\boldsymbol{\beta}=[\beta_{q},\ldots,\beta_{N},\beta_{\mathrm{b}}]$ 
 is the set of coefficients, 
 which requires that \cref{eq:PLG4Lap} holds exactly for all $\rho\in\Pi_{q}$. 
Denote by $\{\psi_{i}\}_{i=1}^{N}$ the monomial bases of $\Pi_{q}$, 
 the requirement is equivalent to 
\begin{equation*}
    \avg{\elap(\psi_{i})}_{\bi} 
     = \sum_{j=1}^{N} \beta_{j}\avg{\psi_{i}}_{\bi_{j}} 
       + \beta_\mathrm{b} \bbrk{\psi_{i}}_{\bi}
\end{equation*}
 holding for $i=1,\ldots,N$, or more concisely, 
 $M\boldsymbol{\beta} = \boldsymbol{L}$, where 
\begin{equation}
    \label{eq:sampleMat_Lap}
    M = 
    \begin{pmatrix}
        \avg{\psi_{1}}_{\bi_{1}} & \ldots & \avg{\psi_{1}}_{\bi_{N}} & 
         \bbrk{\psi_{1}}_{\bi} \\
        \vdots & \ddots & \vdots & \vdots \\
        \avg{\psi_{N}}_{\bi_{1}} & \ldots & \avg{\psi_{N}}_{\bi_{N}} & 
         \bbrk{\psi_{N}}_{\bi}
    \end{pmatrix}
    \in \bbR^{N\times(N+1)},
\end{equation}
and
\begin{equation*}
    \boldsymbol{L} = \left[
        \avg{\elap(\psi_{1})}_{\bi}, \ldots, 
        \avg{\elap(\psi_{N})}_{\bi}
     \right]^{\top}
     \in \bbR^{N}.
\end{equation*}
This system for $\boldsymbol{\beta}$ is underdetermined 
 and the matrix $M$ has full row rank. 
Therefore, we aim to find the weighted minimum norm solution of 
\begin{equation*}
    \min_{\boldsymbol{\beta}\in\bbR^{N+1}}
     \|\boldsymbol{\beta}\|_{W} \quad 
     \mathrm{s.t.} \quad M\boldsymbol{\beta}=\boldsymbol{L},
\end{equation*}
 where $W=\mathrm{diag}\left(w_{1},\ldots,w_{N},w_{\mathrm{b}}\right)$ 
 and 
\begin{equation}
    \label{eq:WeightFunction}
    w_{j} = \min\{\|\bi_{j}-\bi\|_{2}^{-1},w_{\max}\}, \quad 
    w_{\mathrm{b}} = w_{\max},
\end{equation}
 with $w_{\max}=2$ to avoid large weights. 

For pure cells, the last column of $M$ in \cref{eq:sampleMat_Lap} is omitted, 
 and the square linear system $M\boldsymbol{\beta}=\boldsymbol{L}$ 
 is solved directly for the coefficients. 

\subsubsection{Discrete advection operator}
For cut cells far from boundaries, 
 we use a conservative approach for $\adv$. 
Since the velocity field $\bu$ is incompressible, 
 the advection term can be expressed as 
 $\bu\cdot\nabla\rho = \nabla\cdot(\rho\bu)$.
Applying the divergence theorem, 
 the fourth-order discretization formula for $\adv$ is
\begin{equation}
    \label{eq:Regular_adv}
    \begin{split}
    \adv\left(\avg{\rho},\avg{\bu}\right)_{\bi} 
    &\coloneqq \dfrac{1}{h}\sum_{d,\pm}\left(
        \pm\avg{\rho}_{\bi\pm\frac{1}{2}\be^d}
        \avg{u_d}_{\bi\pm\frac{1}{2}\be^d} \pm 
        \dfrac{h^2}{12}\sum_{d'\neq d}
        \left(\mathbf{G}_{d'}^\perp\rho\right)_{\bi\pm\frac{1}{2}\be^d}
        \left(\mathbf{G}_{d'}^\perp u_d\right)_{\bi\pm\frac{1}{2}\be^d}
     \right) \\ 
    &= \avg{\mathcal{L}_\mathrm{adv}(\rho,\bu)}_\bi + O(h^4), 
    \end{split}
\end{equation}
where
\begin{equation*}
    \begin{split}
    \avg{\rho}_{\bi\pm\frac{1}{2}\be^{d}} 
    &= \frac{7}{12}\left(
        \avg{\rho}_{\bi} + \avg{\rho}_{\bi\pm\be^{d}}
     \right) - \frac{1}{12}\left(
        \avg{\rho}_{\bi\mp\be^{d}} + \avg{\rho}_{\bi\pm 2\be^{d}}
     \right) + O(h^4),\\
    \left(\mathbf{G}_{d'}^\perp g\right)_{\bi\pm\frac{1}{2}\be^d} 
    &= \frac{1}{2h}\left(
        \avg{g}_{\bi\pm\frac{1}{2}\be^d+\be^{d'}} - 
        \avg{g}_{\bi\pm\frac{1}{2}\be^d-\be^{d'}}
     \right)\ \mathrm{for}\ g = \rho\ \mathrm{or} \ u_{d}.
    \end{split}
\end{equation*}
For a detailed derivation, refer to \cite[Appendix B]{Zhang2012AdvDiff}.

For cut cell ${\cal C}_{\bi}$ near boundaries, 
 denote by $\eadv^{1}\coloneqq 
    \frac{\partial (u_{1}\rho)}{\partial x}$ and 
 $\eadv^{2}\coloneqq 
    \frac{\partial (u_{2}\rho)}{\partial y}$, 
 we approximate the cell-averaged value of 
 $\eadv^{1}$ and $\eadv^{2}$ 
 separately. 
Consider $\eadv^{1}$ as an example, 
 observing that it is a bilinear operator, 
 we aim to construct a bilinear form 
 to approximate the cell-averaged value. 

We first employ the algorithm in~\cite{Zhang2024PLG} 
 to generate two stencils for ${\cal C}_{\bi}$, 
 one consists of cut cells $({\cal C}_{\bi_{j}})_{j=1}^{N}$ (for $\rho$), 
 and the other consists of cut faces 
 $({\cal F}_{\bk_{j}+\frac{1}{2}\be^{1}})_{j=1}^{N}$ (for $u_{1}$). 
Because of the physical characteristics of the advection operator, 
 the poised lattice is biased towards the upwind direction. 
Define 
\begin{equation}
    \label{eq:UpwindLoc}
    \bp_{1} \coloneqq \mathbf{p} - \left\lfloor\frac{q}{2}\right\rfloor 
     h \frac{\bu}{\|\bu\|_{2}}, \quad 
    \bp_{2} \coloneqq \mathbf{p} - \left\lfloor\frac{q}{2}\right\rfloor 
    h \bn_{\mathrm{b}},
\end{equation}
 where $\mathbf{p}$ is the cell center of ${\cal C}_{\bi}$ 
 and $\bn_{\mathrm{b}}$ is the unit outward normal 
 at the closest point on $\partial\Omega$ to $\mathbf{p}$. 
As a particle travels from $\bp_{1}$ to $\bp_{2}$, 
 it intersects a sequence of cells, 
 then we attempt to find a poised lattice centered at each cell one by one.  
See \cref{fig:Stencil4Adv} for an example. 
Additionally, if ${\cal C}_{\bi}$ is an interface cell, 
 the cut boundary ${\cal S}_{\bi}$ is added to both lattices 
 to incorporate the boundary condition. 

\begin{figure}[htbp]
    \vspace{-10pt}
    \centering
    \includegraphics[width = 1.0\linewidth]{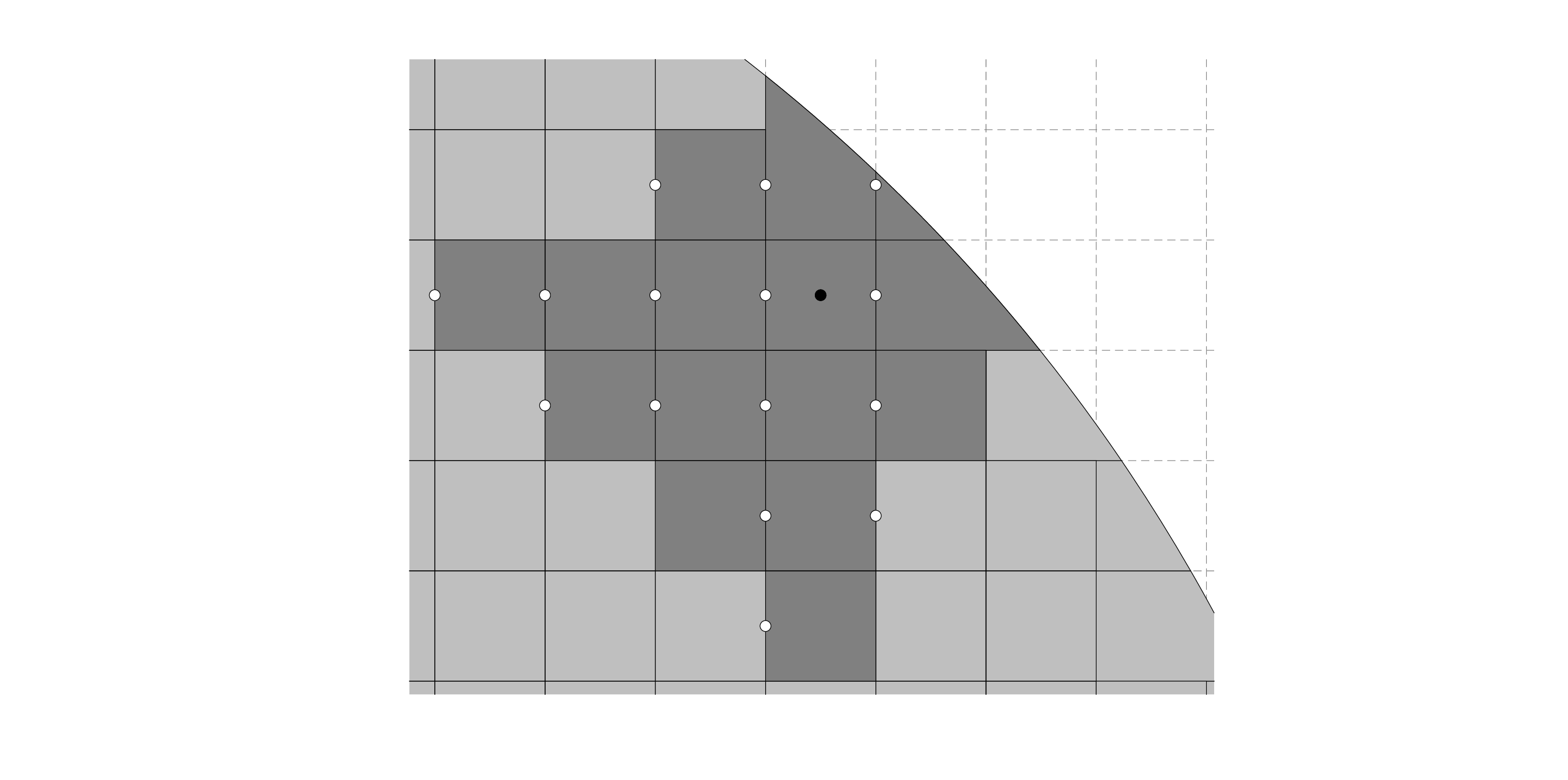}
    \vspace{-30pt}
    \caption{An example of poised lattices for 
     $\eadv^{1}$ with $\bu=(1,1)^{\top}$. 
     The target cut cell ${\cal C}_\bi$ is marked by a black point.
     $({\cal C}_{\bi_{j}})_{j=1}^{N}$ are cut cells colored by dark gray, 
     and $({\cal F}_{\bk_{j}+\frac{1}{2}\be^{1}})_{j=1}^{N}$ 
     are cut faces marked by white circles.}
    \label{fig:Stencil4Adv}
    \vspace{-15pt}
\end{figure}

For an interface cell ${\cal C}_{\bi}$, 
 we define $\boldsymbol{\rho}$ as in \cref{eq:bold_rho} and 
\begin{equation*}
    \boldsymbol{u_{1}} = \left[
        \avg{u_{1}}_{\bk_{1}+\frac{1}{2}\be^{1}}, \ldots, 
        \avg{u_{1}}_{\bk_{N}+\frac{1}{2}\be^{1}}, 
        \bbrk{u_{1}}_{\bi}
     \right]^\top \in \bbR^{N+1}.
\end{equation*}
We seek a matrix $A = (A_{j_{1} j_{2}})\in \bbR^{(N+1)\times(N+1)}$
 such that the bilinear approximation formula
\begin{equation}
    \label{eq:PLG4Adv}
    \avg{\eadv^1(\rho,\bu)}_{\bi} = 
     \boldsymbol{\rho}^{\top}A \boldsymbol{u_{1}} + O(h^q)
\end{equation}
 holds for all sufficiently smooth function $\rho:\bbR^2\to \bbR$ 
 and $u_{1}:\bbR^2\to \bbR$.
This requires that \cref{eq:PLG4Adv} holds exactly 
 for all $\rho\in\Pi_{q}$ and $u_{1}\in\Pi_{q}$, 
 which is equivalent to 
\begin{equation}
    \label{eq:PLG4AdvMat}
    M_{1}AM_{2}^{\top}=\hat{\boldsymbol{L}},
\end{equation}
 where 
\begin{equation*}
    M_1=M; \quad 
    M_2 = 
    \begin{pmatrix}
    \avg{\psi_{1}}_{\bk_{1}+\frac{1}{2}\be^1} & \cdots & 
     \avg{\psi_{1}}_{\bk_{N}+\frac{1}{2}\be^1} & 
     \bbrk{\psi_{1}}_{\bi} \\
    \vdots & \ddots & \vdots & \vdots \\
    \avg{\psi_{N}}_{\bk_{1}+\frac{1}{2}\be^1} & \cdots & 
     \avg{\psi_{N}}_{\bk_{N}+\frac{1}{2}\be^1} & 
     \bbrk{\psi_{N}}_{\bi}
    \end{pmatrix}
    \in \bbR^{N\times(N+1)},
\end{equation*}
and 
\begin{equation*}
    \hat{\boldsymbol{L}} = 
    \begin{pmatrix}
    \avg{\eadv^{1}(\psi_{1},\psi_{1})}_{\bi} & \cdots & 
     \avg{\eadv^{1}(\psi_{1},\psi_{N})}_{\bi} \\ 
    \vdots & \ddots & \vdots \\
    \avg{\eadv^{1}(\psi_{N},\psi_{1})}_{\bi} & \cdots & 
     \avg{\eadv^{1}(\psi_{N},\psi_{N})}_{\bi}
    \end{pmatrix}
    \in \bbR^{N\times N}.
\end{equation*}

To solve \cref{eq:PLG4AdvMat}, we first let $A_{1}=AM_{2}^{\top}$. 
Given that $M_{1}$ has full row rank, 
 we can compute $A_{1}$ by solving $M_{1}A_{1}=\hat{\boldsymbol{L}}$ 
 column by column, 
 using a weighted minimum norm approach. 
Once $A_{1}$ is determined, 
 we can compute $A$ by solving $M_{2}A^{\top}=A_{1}^{\top}$ column by column, 
 using a weighted minimum norm approach. 
The weight matrix is chosen consistently with \cref{eq:WeightFunction}.

For pure cells, the last columns of $M_1$ and $M_2$ are omitted, 
 then $M_1A_1=\hat{\mathbf{L}}$ and $M_2A^{\top}=A_1^{\top}$ 
 are square linear systems which can be solved directly.

\subsection{Time integration}
To solve the ODEs system \cref{eq:AdvDiff_Finite_Volume}, 
 we use the implicit-explicit Runge-Kutta (IMEX) scheme 
 introduced by Kennedy and Carpenter~\cite{Kennedy2003IMEX}
 for time integration. 
We implement the ARK4(3)6L[2]SA method, 
 which is a six-stage, fourth-order accurate, and L-stable scheme. 
In this work, 
 We treat the advection term 
 $-\vol{{\cal C}_{\bi}(t)}\adv(\avg{\rho},\avg{\bu})_{\bi}$ 
 explicitly, 
 while handling the Laplacian term 
 $\frac{\vol{{\cal C}_{\bi}(t)}}{\mathrm{Pe}}
  \lap(\avg{\rho})_{\bi}$ 
 and the net transportation term 
 $\mathbf{L}_{\mathrm{net}}(\bv,\rho_{\mathrm{bc}})_{\bi}$ 
 implicitly. 

Given the moving boundary 
 $\partial\Omega^{(1)}\coloneqq\partial\Omega(t_{n})$ at time $t_{n}$, 
 the velocity field $\bv$, 
 and the initial cell-averaged values 
 $\avg{\rho}_{\bi}^{(1)}\coloneqq\avg{\rho(t)}_{\bi}^{n}$, 
 the time step $[t_{n},t_{n+1}]$ proceeds as follows: 
\begin{enumerate}[({TI}-1)]
    \item Employ the ARMS technique~\cite{Hu2024ARMS} 
     to obtain the moving boundary 
     $\partial\Omega^{(s)}\coloneqq\partial\Omega(t_{n}+c_{s}k)$ 
     at each stage, $s=2,\ldots,6$. 
    We then embed all $\Omega^{(s)}$ into $\Omega_{R}$, 
     forming the corresponding ${\cal C}_{\bi}^{(s)}$, 
     ${\cal F}_{\hfIdx}^{(s)}$ and ${\cal S}_{\bi}^{(s)}$;
    \item Apply TMA and SMA procedures introduced in \Cref{sec:Merging_Alg}
     to merge cells and faces, 
     ensuring the continuity of $\vol{{\cal C}_{\bi}(t)}\avg{\rho}_{\bi}$ 
     during $[t_n,t_{n+1}]$ on each merged cut cells. 
    Additionally, this ensures that 
     no merged cut cells or faces are smaller than the user-defined threshold, 
     reducing the condition number of operators 
     and alleviate the CFL constraint;
    \item For $s=2,\ldots,6$, we need to solve Helmholtz-type equations: 
    \begin{equation}
        \label{eq:Helmholtz_type}
        \left(I-\frac{\gamma k}{\mathrm{Pe}} \lap\right)
            \avg{\rho}_{\bi}^{(s)} = 
         \frac{1}{\vol{{\cal C}_{\bi}^{(s)}}} \left(
            \vol{{\cal C}_{\bi}^{(1)}}\avg{\rho}_{\bi}^{(1)} + k \hat{L}
         \right)
    \end{equation}
     to get $\avg{\rho}_{\bi}^{(s)}$ on each merged cut cell, 
     where $\gamma=a_{s,s}^{[I]}$ is a constant for all stages, 
     and 
    \begin{equation*}
        \begin{split}
        \hat{L} =
        &-\sum_{j=1}^{s-1} a_{s,j}^{[E]}
           \vol{{\cal C}_{\bi}^{(j)}} \adv^{(j)}\left(
                \avg{\rho}^{(j)}, \avg{\bu}^{(j)}
            \right)_{\bi}
         +\frac{1}{\mathrm{Pe}} \sum_{j=1}^{s-1} a_{s,j}^{[I]} 
            \vol{{\cal C}_{\bi}^{(j)}} \lap^{(j)}\left(
                \avg{\rho}^{(j)}
            \right)_{\bi} \\
        &+\sum_{j=1}^{s} a_{s,j}^{[I]} 
           \mathbf{L}_{\mathrm{net}}^{(j)}(\bv, \rho_{\mathrm{bc}})_{\bi}, 
        \end{split}
    \end{equation*}
     with $\adv^{(j)}$, $\lap^{(j)}$ 
     and $\mathbf{L}_{\mathrm{net}}^{(j)}$ the operators 
     corresponding to the embedded grid $\Omega^{(j)}$. 
    \Cref{eq:Helmholtz_type} is a Helmholtz-type linear system, 
     and we use the multigrid method in~\cite{Zhang2024Elliptic} 
     to solve it;
    \item Once the values for all six stages are obtained, 
     we update the solution at $t_{n+1}$ by 
    \begin{equation*}
        \avg{\rho}_{\bi}^{n+1} 
        =\frac{1}{\vol{{\cal C}_{\bi}^{(6)}}}\left(
            \vol{{\cal C}_{\bi}^{(6)}}\avg{\rho}_{\bi}^{(6)} + k \tilde{L}
         \right),
    \end{equation*}
     where 
     \begin{equation*}
        \tilde{L} 
        =-\sum_{j=1}^{6} \left(b_{j}-a_{6,j}^{[E]}\right) 
            \vol{{\cal C}_{\bi}^{(j)}} \adv^{(j)}\left(
                \avg{\rho}^{(j)}, \avg{\bu}^{(j)}
            \right)_{\bi}.
     \end{equation*}
\end{enumerate}

\section{Error analysis}
We analyze the one-step error of a cut cell to assess the accuracy 
 of the proposed method. 
Denote by $\overline{\Omega}^{(s)}$ the moving region at each stage 
 derived by the exact flow map with $\overline{\Omega}^{(0)} = \Omega^{(0)}$, 
 $\overline{{\cal C}}_{\bi}$ the corresponding cut cells, 
 and $\overline{\avg{\cdot}}_{\bi}$ the corresponding cell-averaged values. 
Assume the solution $\avg{\rho}_{\bi}^{n}$ is exact at time $t_n$, 
 we analyze the error between $\avg{\rho}_{\bi}^{n+1}$ 
 and $\overline{\avg{\rho(\bx,t_{n+1})}}_{\bi}$.
 
By triangular inequality, the error is expressed as 
\begin{equation}
    \label{eq:One_step_error}
    \begin{split}
    E^{n+1} \coloneqq& 
    \left|
    \overline{\avg{\rho(\bx,t_{n+1})}}_{\bi} - \avg{\rho}_{\bi}^{n+1}
    \right|\\ 
    \le& 
    \left|
        \overline{\avg{\rho(\bx,t_{n+1})}}_{\bi} - 
        \avg{\rho(\bx,t_{n+1})}_{\bi}
    \right| + 
    \left|\avg{\rho}_{\bi}^{n+1} - \avg{\rho(\bx,t_{n+1})}_{\bi}\right|.
    \end{split}
\end{equation}
For the first term on the right hand, 
 denote by $a$ the averaged value of $\rho(\bx, t_{n+1})$ 
 on ${\cal C}_{\bi}\cap\overline{\cal C}_{\bi}$, i.e., 
\begin{equation*}
    a \coloneqq 
    \frac{1}{\vol{{\cal C}_{\bi}\cap\overline{\cal C}_{\bi}}}
    \iint_{{\cal C}_{\bi}\cap\overline{\cal C}_{\bi}}\rho(\bx, t_{n+1})\ 
    \dif \bx. 
\end{equation*}
Since ${\cal C}_{\bi}\cup\overline{\cal C}_{\bi}$ 
 are contained in a $O(h)\times O(h)$ box, 
 we have $\rho(\bx,t_{n+1}) = a + O(h)$ 
 on ${\cal C}_{\bi}\cup\overline{\cal C}_{\bi}$ by Taylor expansion. 
Therefore, we have 
\begin{equation*}
    \begin{split}
    &\left|
        \overline{\avg{\rho(\bx,t_{n+1})}}_{\bi} - 
        \avg{\rho(\bx,t_{n+1})}_{\bi}
    \right| \\
    =& \left|
        \frac{1}{\vol{{\cal C}_{\bi}}}\iint_{{\cal C}_{\bi}} 
        \rho(\bx, t_{n+1})\ \dif\bx - 
        \frac{1}{\vol{\overline{\cal C}_{\bi}}}\iint_{\overline{\cal C}_{\bi}} 
        \rho(\bx, t_{n+1})\ \dif\bx
    \right|\\
    =& \left|
        \frac{1}{\vol{{\cal C}_{\bi}}} \left(
            \vol{{\cal C}_{\bi}}a + 
            \iint_{{\cal C}_{\bi}\backslash\overline{\cal C}_{\bi}} 
            O(h)\ \dif\bx
        \right) - 
        \frac{1}{\vol{\overline{\cal C}_{\bi}}}\left(
            \vol{\overline{\cal C}_{\bi}}a + 
            \iint_{\overline{\cal C}_{\bi}\backslash{\cal C}_{\bi}}
            O(h)\ \dif\bx
        \right)
    \right|\\
    =& \left|
        \frac{1}{\vol{{\cal C}_{\bi}}}
        \iint_{{\cal C}_{\bi}\backslash\overline{\cal C}_{\bi}} 
        O(h)\ \dif\bx - 
        \frac{1}{\vol{\overline{\cal C}_{\bi}}}
        \iint_{\overline{\cal C}_{\bi}\backslash{\cal C}_{\bi}}
        O(h)\ \dif\bx
    \right|.
    \end{split}
\end{equation*}
Since the one-step error of the ARMS method is $O(kh_L^4+k^5)$, 
 the volumes of both ${\cal C}_{\bi}\backslash\overline{\cal C}_{\bi}$ 
 and $\overline{\cal C}_{\bi}\backslash{\cal C}_{\bi}$ are $O(khh_L^4+k^5h)$. 
Therefore, 
\begin{equation*}
    \begin{split}
    \left|
        \overline{\avg{\rho(\bx,t_{n+1})}}_{\bi} - 
        \avg{\rho(\bx,t_{n+1})}_{\bi}
    \right|
    =&  \left|
        \frac{1}{\vol{{\cal C}_{\bi}}}O(kh^2h_L^4+k^5h^2) - 
        \frac{1}{\vol{\overline{\cal C}_{\bi}}}O(kh^2h_L^4+k^5h^2)
    \right| \\ 
    =& O(kh_L^4 + k^5),
    \end{split}
\end{equation*}
 where $\vol{{\cal C}_{\bi}}$ and $\vol{\overline{\cal C}_{\bi}}$ are $O(h^2)$. 

For the second term on the right hand side of \cref{eq:One_step_error}, 
 notice that 
 $\vol{{\cal C}_{\bi}}\left|
    \avg{\rho}_{\bi}^{n+1} - \avg{\rho(\bx,t_{n+1})}_{\bi}\right|$ 
 corresponds to the truncation error scaled by the time step size $k$. 
The time evolution variable in the ODE \cref{eq:AdvDiff_Finite_Volume} 
 can be represented as 
 $h^2\frac{\vol{{\cal C}_{\bi}(t)}}{h^2}\avg{\rho}_{\bi}$, 
 where $\frac{\vol{{\cal C}_{\bi}(t)}}{h^2}=O(1)$ is the volume of fraction. 
The IMEX scheme contributes a truncation error of $O(k^4h^2)$ 
 because of its fourth-order accuracy 
 and the $h^2$ factor in the time evolution variable. 
As for the spatial discretization, 
 the net transportation term $\mathbf{L}_\mathrm{net}$ 
 is computed by Simpson's rule, 
 achieving near machine precision, 
 so the error it introduces is negligible. 
Away from boundaries, 
 the spatial discretizations of $\adv$ and $\lap$ 
 are both fourth-order accurate, 
 cf. \cref{eq:Regular_adv} and \cref{eq:Regular_lap}. 
For cut cells near boundaries, 
 $\adv$ retains fourth-order accuracy, 
 but $\lap$ is only third-order accurate, 
 cf. \cref{eq:PLG4Adv} and \cref{eq:PLG4Lap}. 
Since both $\adv$ and $\lap$ are scaled by $\vol{{\cal C}_{\bi}}$ 
 in the ODE \cref{eq:AdvDiff_Finite_Volume}, 
 the spatial discretization introduces a truncation error of $O(h^6)$ 
 for most cut cells 
 and $O(h^5)$ for cut cells on a set of codimension $1$. 
Therefore, we have 
 $\left|\avg{\rho}_{\bi}^{n+1} - \avg{\rho(\bx,t_{n+1})}_{\bi}\right| 
  = O(k^5 + kh^4)$ in the $1$-norm 
 and the one-step error is $E^{n+1}=O(kh^4+kh_L^4+k^5)$ in the $1$-norm, 
 which confirming that the proposed method achieves fourth-order accuracy 
 in the $1$-norm.

\section{Numerical results}
\label{sec:Numerical_results}
In this section, 
 we test the proposed method with various numerical tests 
 to demonstrate its fourth-order accuracy 
 for the advection-diffusion equation with moving boundaries. 

The $L^p$ norm of a scalar function
 $g: \Omega\rightarrow \mathbb{R}$ is defined as
\begin{equation*}
  \|g\|_p = 
  \left\{
    \begin{aligned}
      &\left( 
        \sum\limits_{{\cal C}_{\bi}\subset \Omega} 
        \|{\cal C}_{\bi}\| \cdot 
        \left|\avg{g}_\bi\right|^p 
      \right)^{\frac{1}{p}} \quad 
      && \mathrm{if}\ p=1,2; \\
      & \max\limits_{{\cal C}_{\bi}\subset \Omega} |\avg{g}_\bi| \quad 
      && \mathrm{if}\ p=\infty, 
    \end{aligned}
  \right.
\end{equation*}
 where the cut cell ${\cal C}_{\bi}$ and the cell average $\avg{g}_\bi$ 
 are given in \Cref{sec:Algorithm}.  

\subsection{Problem 1: Diffusion from a point source}
We consider solving the advection-diffusion concentration field 
 initialized from a point source within a moving disk 
 from~\cite{Barrett2022}. 
The exact solution of the concentration field is given by
\begin{equation}
    \label{eq:PointSource}
    \rho(\bx,t)=\frac{10 \mathrm{Pe}}{4 \left(t + \frac{1}{2}\right)}
    \exp\left(
        -\mathrm{Pe}\frac{\|\bx-\bx_c(t)\|}{4\left(t+\frac{1}{2}\right)}
        \right),
\end{equation}
 where $\bx_c(t)$ represents the center of the moving disk. 

We consider two types of moving disk, 
 one is a translating disk, the other is a rotating disk. 
The computational domain, the moving region and the velocity field 
 are defined as follows:
\begin{enumerate}[({Test}-1)]
    \item $\Omega_{R}=[0,3]^2$, 
     $\Omega_{1}(t)=\{(x,y):(x-1-t)^2+(y-1-t)^2<1\}$, 
     $\bu=\bv=(
        \cos\left(\frac{\pi}{4}\right),\sin\left(\frac{\pi}{4}\right))^{\top}$;
    \item $\Omega_{R}=[-2.5,2.5]^2$, 
     $\Omega_{2}(t)=\{(x,y):(x-1.5\cos(2\pi t))^2+(y-1.5\sin(2\pi t))^2<1\}$, 
     $\bu=\bv=2\pi(-y,x)^{\top}$.
\end{enumerate} 
The Peclet number is $\mathrm{Pe}=10$ for both tests. 
The time step sizes $k$ are determined by the CFL condition 
 $C_\mathrm{CFL}\coloneqq\frac{\|\bu\|_2 k}{\epsilon h}\approx 0.17$ 
 and $0.5$, respectively,  
 where $\epsilon=0.15$ is the merge threshold in \Cref{sec:Merging_Alg}.

\Cref{tab:PointSourceDiffusion} shows the errors 
 in the $L^1,\,L^2$ and $L^\infty$ norms, 
 along with the corresponding convergence rates. 
These results confirm the fourth-order accuracy of our numerical method. 
\Cref{fig:PointSourceDiffusion} depicts the concentration field 
 at various time steps $t\in[0,1]$ for a grid resolution of $h=\frac{1}{80}$.

\begin{table}[htbp]
    \centering
    \begin{tabular}{c|c|c|c|c|c|c|c}
        \hline
        \multicolumn{8}{c}{Test-1}\\
        \hline
         & $h=\frac{1}{10}$ & rate & $h=\frac{1}{20}$ & rate 
         & $h=\frac{1}{40}$ & rate & $h=\frac{1}{80}$ \\
        \hline
        $L^\infty$ & 
        1.01e-02 & 4.13 & 5.76e-04 & 4.06 & 3.46e-05 & 4.03 & 2.13e-06 \\
        $L^1$ & 
        7.91e-03 & 4.04 & 4.82e-04 & 4.05 & 2.90e-05 & 4.03 & 1.78e-06 \\
        $L^2$ & 
        6.28e-03 & 4.03 & 3.85e-04 & 4.04 & 2.34e-05 & 4.03 & 1.44e-06 \\
        \hline
        \multicolumn{8}{c}{Test-2}\\
        \hline
         & $h=\frac{1}{10}$ & rate & $h=\frac{1}{20}$ & rate 
         & $h=\frac{1}{40}$ & rate & $h=\frac{1}{80}$ \\
        \hline
        $L^\infty$ & 
        9.17e-02 & 4.29 & 4.69e-03 & 4.05 & 2.83e-04 & 4.01 & 1.75e-05 \\
        $L^1$ & 
        1.16e-01 & 4.60 & 4.77e-03 & 4.06 & 2.86e-04 & 4.03 & 1.75e-05 \\
        $L^2$ & 
        8.15e-02 & 4.53 & 3.52e-03 & 4.05 & 2.13e-04 & 4.02 & 1.32e-05 \\
        \hline
    \end{tabular}
    \caption{Errors and convergence rates 
     for advection-diffusion concentration fields 
     initialized from a point source 
     within translating and rotating disks. 
    Simulations are run to a final time of $T=1$ 
     and the CFL numbers are $C_\mathrm{CFL}\approx 0.17$ and $0.5$, 
     respectively.
    }
    \label{tab:PointSourceDiffusion}
\end{table}

\begin{figure}[htb]
    \centering
    \begin{subfigure}{0.3\linewidth}
		\centering
		\includegraphics[width=0.9\linewidth]{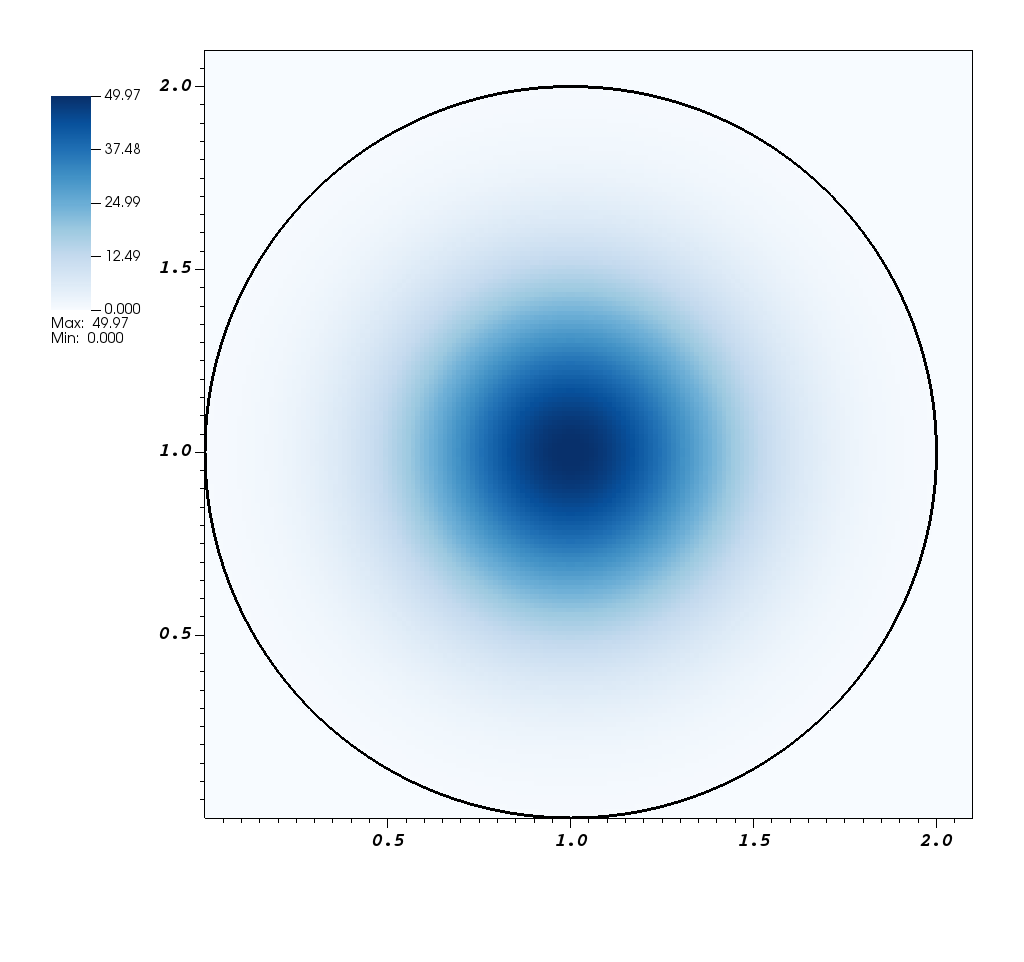}
		\caption{Test-1: $t=0$}
	\end{subfigure}
	\begin{subfigure}{0.3\linewidth}
		\centering
		\includegraphics[width=0.9\linewidth]{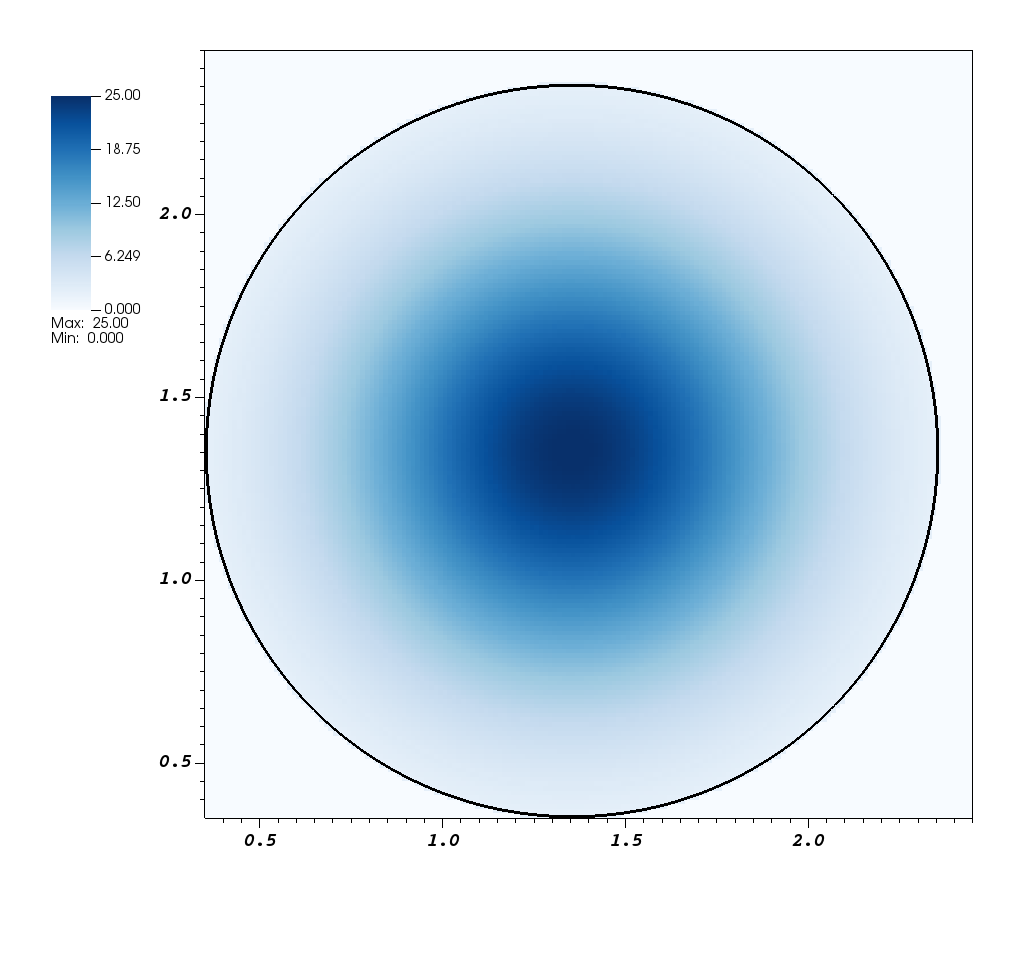}
		\caption{Test-1: $t=0.5$}
	\end{subfigure}
	\begin{subfigure}{0.3\linewidth}
		\centering
		\includegraphics[width=0.9\linewidth]{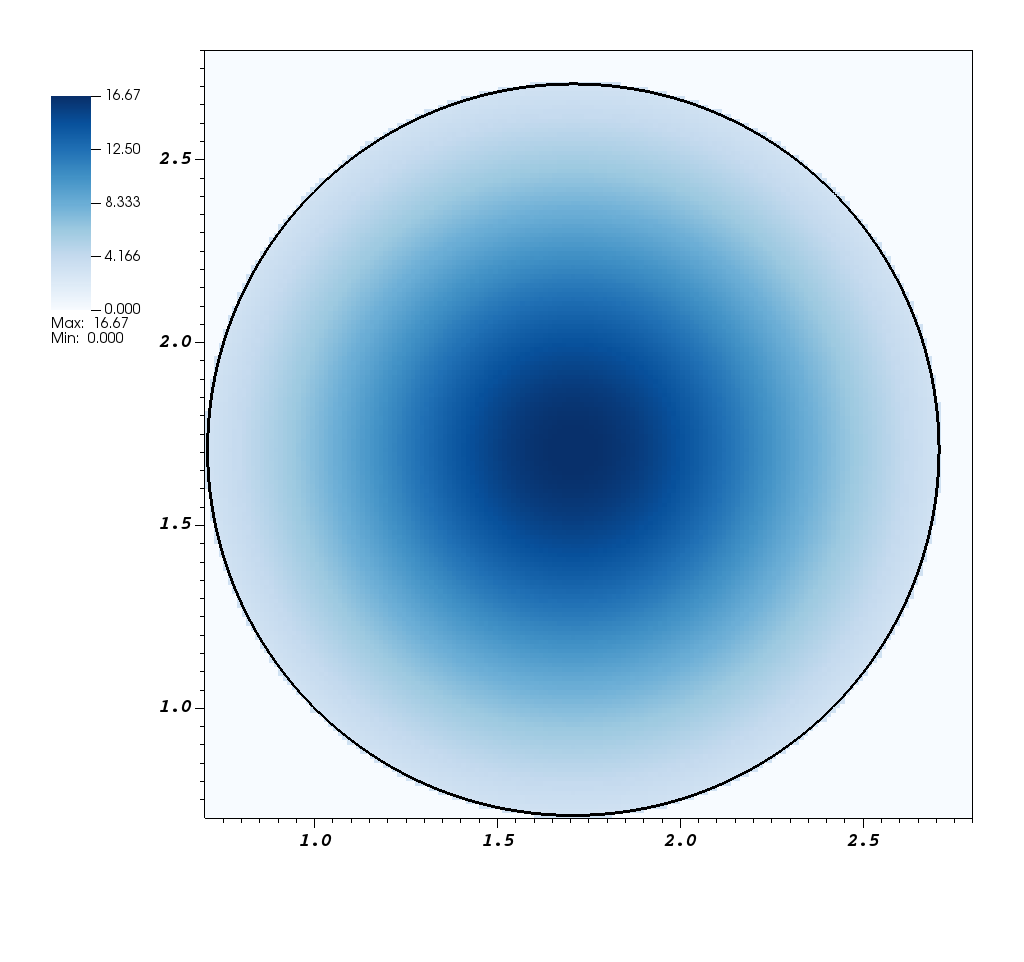}
		\caption{Test-1: $t=1$}
	\end{subfigure}\\
    \begin{subfigure}{0.3\linewidth}
		\centering
		\includegraphics[width=0.9\linewidth]{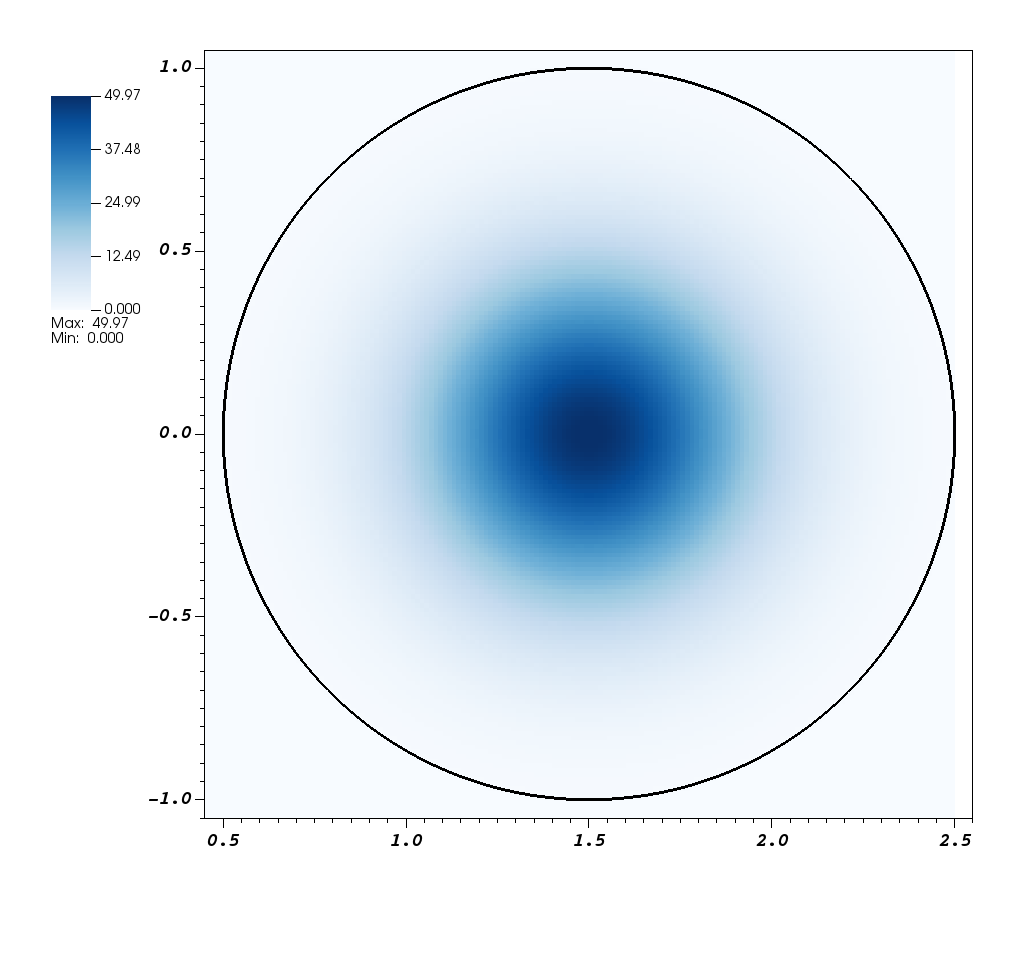}
		\caption{Test-2: $t=0$}
	\end{subfigure}
	\begin{subfigure}{0.3\linewidth}
		\centering
		\includegraphics[width=0.9\linewidth]{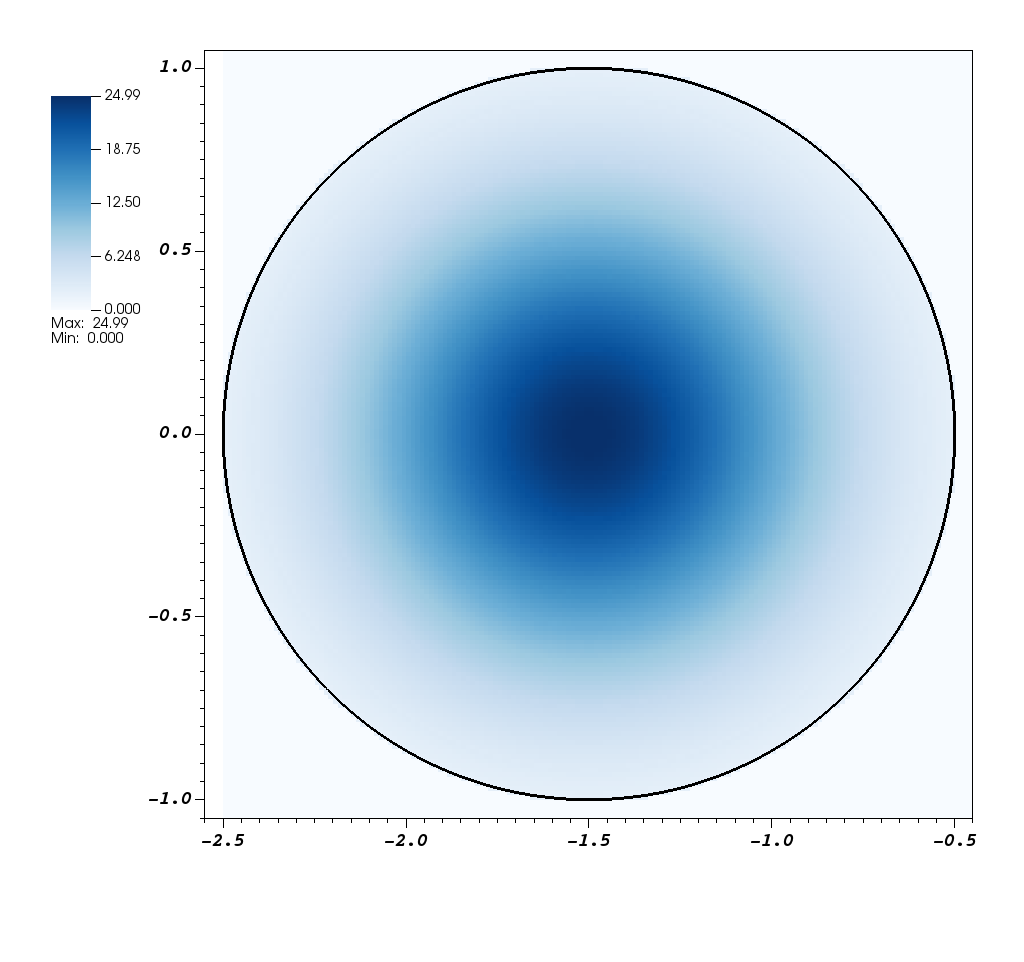}
		\caption{Test-2: $t=0.5$}
	\end{subfigure}
	\begin{subfigure}{0.3\linewidth}
		\centering
		\includegraphics[width=0.9\linewidth]{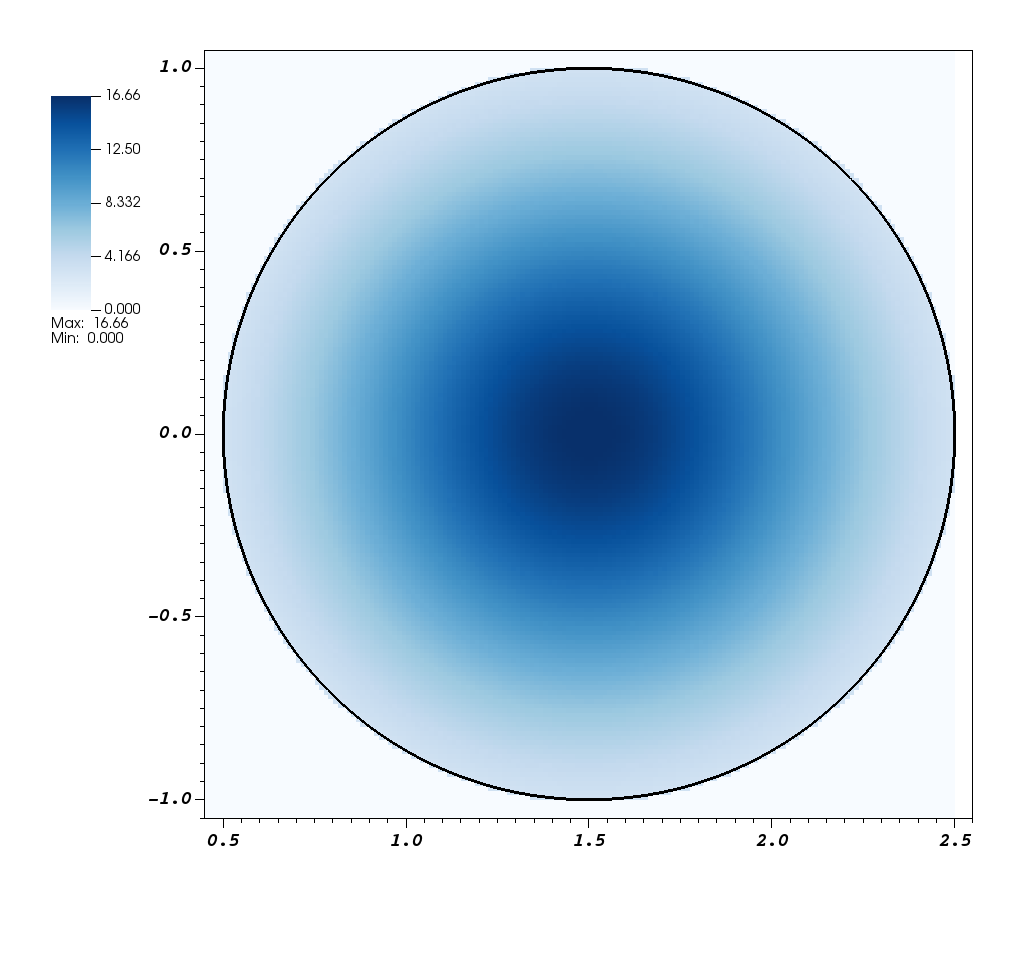}
		\caption{Test-2: $t=1$}
	\end{subfigure}
    \caption{Numerical concentration fields at different times. 
    The moving boundary is depicted in a black curve. 
    The simulation is run to a final time of $T=1$ 
     with a grid resolution of $h=1/80$, 
     the CFL numbers are $C_\mathrm{CFL}\approx0.17$ and $0.5$ , respectively.}
    \label{fig:PointSourceDiffusion}
\end{figure}

\subsection{Problem 2: Translation and scaling of a disk}
We now consider solving the advection-diffusion equation 
 where $\bu\neq\bv$. 
The exact solution of the concentration field 
 and the advection velocity are 
\begin{equation}
    \label{eq:DecaySinSin}
    \rho(x,y,t)=e^{-\frac{2t}{\mathrm{Pe}}}\sin(x-t)\sin(y-t); \quad 
    \bu=(1,1)^{\top}.
\end{equation}

We consider three types of moving region, 
 a translating disk, a contracting disk and an expanding disk. 
The computational domain, the moving region and the boundary velocity 
 for the tests are defined as follows 
\begin{enumerate}[({Test}-1)]
    \item \label{enum:Translation}
     $\Omega_{R}=[-1,2]^2$, 
     $\Omega_{1}(t)=\{(x,y):(x-0.5t)^2+(y-0.5t)^2<1\}$, 
     $\bv=(0.5,0.5)^{\top}$;
    \item \label{enum:Contraction}
     $\Omega_{R}=[-1.5,1.5]^2$, 
     $\Omega_{2}(t) = \{(x,y):x^2+y^2<(1.5-0.5t)^2\}$, 
     $\bv=-\frac{0.5}{\sqrt{x^2+y^2}}(x,y)^{\top}$;
    \item \label{enum:Expansion}
     $\Omega_{R}=[-1.5,1.5]^2$, 
     $\Omega_{3}(t)=\{(x,y):x^2+y^2<(1 + 0.5t)^2\}$, 
     $\bv=\frac{0.5}{\sqrt{x^2+y^2}}(x,y)^{\top}$.
\end{enumerate}
The Peclet number is $\mathrm{Pe}=10$ for all tests. 
The time step size $k$ is chosen by the CFL condition 
 $C_{\mathrm{CFL}}\approx 0.05$, 
 where the merge threshold is $\epsilon = 0.15$.

\Cref{tab:DecaySinSin} shows the errors 
 in the $L^1,\,L^2$ and $L^\infty$ norms, 
 along with the corresponding convergence rates. 
These results demonstrate the fourth-order accuracy of our numerical method. 
\Cref{fig:DecaySinSin} depicts the concentration field 
 at various time steps $t\in[0,1]$ for a grid resolution of $h=\frac{1}{80}$.

\begin{table}[htbp]
    \centering
    \begin{tabular}{c|c|c|c|c|c|c|c}
        \hline
        \multicolumn{8}{c}{Test-\ref{enum:Translation}}\\
        \hline
         & $h=\frac{1}{10}$ & rate & $h=\frac{1}{20}$ & rate 
         & $h=\frac{1}{40}$ & rate & $h=\frac{1}{80}$ \\
        \hline
        $L^\infty$ & 
        1.56e-06 & 3.85 & 1.08e-07 & 3.92 & 7.12e-09 & 3.98 & 4.53e-10 \\
        $L^1$ & 
        2.09e-06 & 3.70 & 1.61e-07 & 3.91 & 1.07e-08 & 4.00 & 6.70e-10 \\
        $L^2$ & 
        1.44e-06 & 3.73 & 1.09e-07 & 3.92 & 7.19e-09 & 3.99 & 4.51e-10 \\
        \hline\hline
        \multicolumn{8}{c}{Test-\ref{enum:Contraction}}\\
        \hline
         & $h=\frac{1}{10}$ & rate & $h=\frac{1}{20}$ & rate 
         & $h=\frac{1}{40}$ & rate & $h=\frac{1}{80}$ \\
        \hline
        $L^\infty$ & 
        2.37e-06 & 3.96 & 1.52e-07 & 3.98 & 9.67e-09 & 3.99 & 6.09e-10 \\
        $L^1$ & 
        3.60e-06 & 3.87 & 2.47e-07 & 3.95 & 1.60e-08 & 3.99 & 1.01e-09 \\
        $L^2$ & 
        2.44e-06 & 3.88 & 1.66e-07 & 3.95 & 1.07e-08 & 3.99 & 6.72e-10 \\
        \hline\hline
        \multicolumn{8}{c}{Test-\ref{enum:Expansion}}\\
        \hline
         & $h=\frac{1}{10}$ & rate & $h=\frac{1}{20}$ & rate 
         & $h=\frac{1}{40}$ & rate & $h=\frac{1}{80}$ \\
        \hline
        $L^\infty$ & 
        2.33e-06 & 4.15 & 1.32e-07 & 3.95 & 8.52e-09 & 3.98 & 5.41e-10 \\
        $L^1$ & 
        6.21e-06 & 3.94 & 4.04e-07 & 3.98 & 2.55e-08 & 4.02 & 1.57e-09 \\
        $L^2$ & 
        2.91e-06 & 3.93 & 1.91e-07 & 3.98 & 1.21e-08 & 4.01 & 7.51e-10 \\
        \hline
    \end{tabular}
    \caption{Errors and convergence rates for 
     Test-\ref{enum:Translation}\textemdash Test-\ref{enum:Expansion}. 
    Simulations are run to a final time of $T=1$ 
     and the CFL number is $C_\mathrm{CFL}\approx 0.05$.
    }
    \label{tab:DecaySinSin}
\end{table}

\begin{figure}[htb]
    \centering
    \begin{subfigure}{0.3\linewidth}
		\centering
		\includegraphics[width=0.9\linewidth]{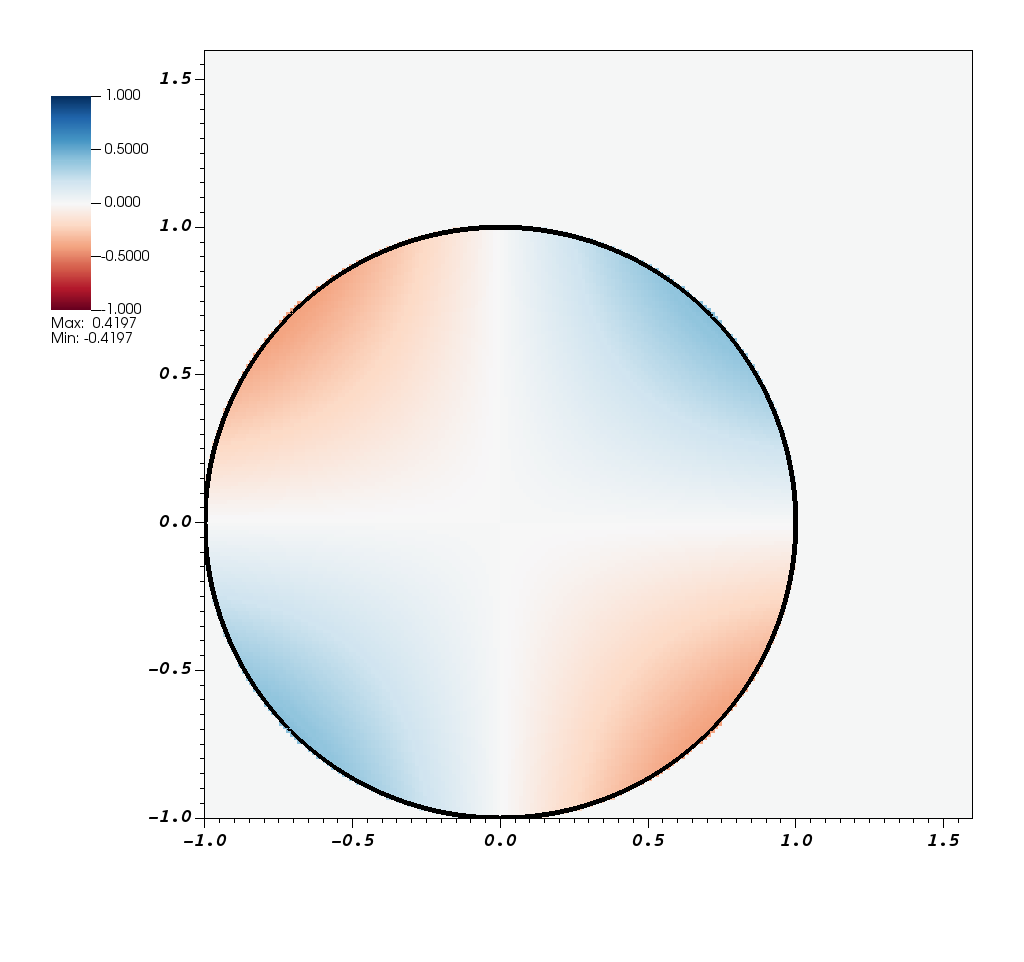}
		\caption{Test-\ref{enum:Translation}: $t=0$}
	\end{subfigure}
	\begin{subfigure}{0.3\linewidth}
		\centering
		\includegraphics[width=0.9\linewidth]{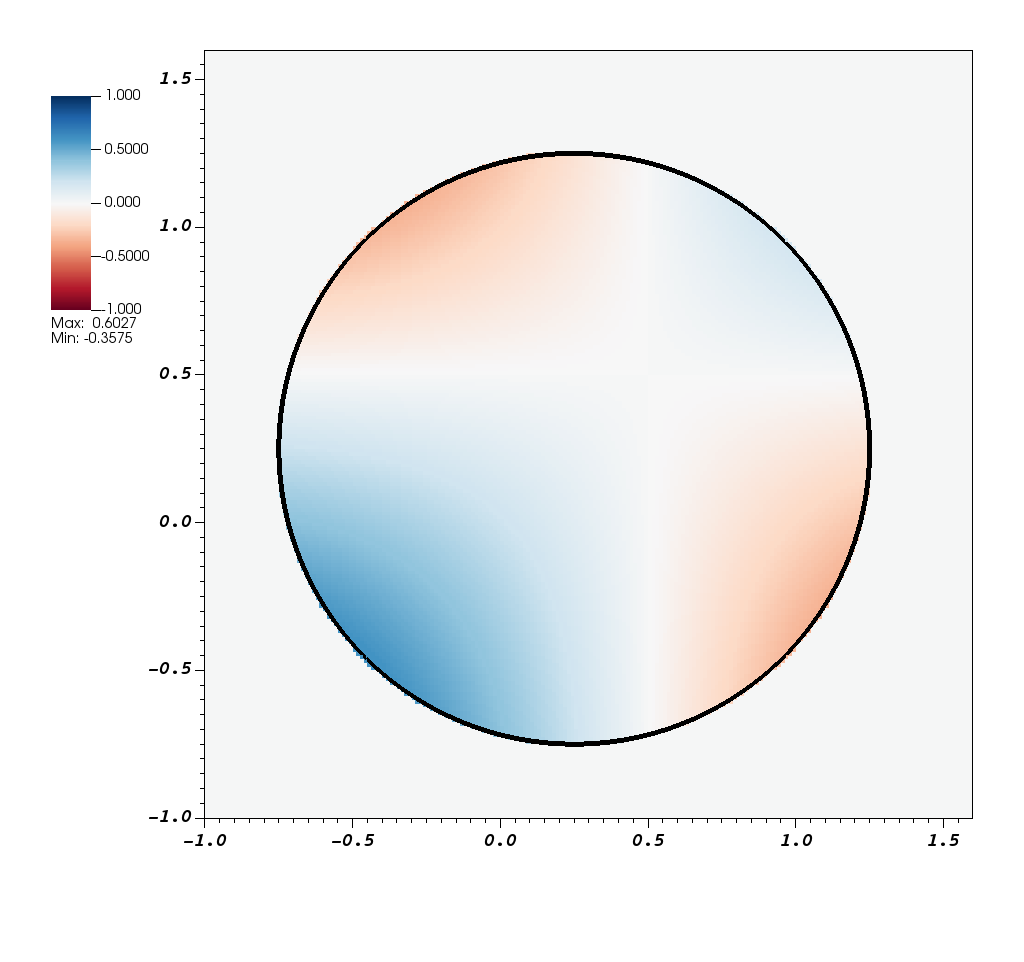}
		\caption{Test-\ref{enum:Translation}: $t=0.5$}
	\end{subfigure}
	\begin{subfigure}{0.3\linewidth}
		\centering
		\includegraphics[width=0.9\linewidth]{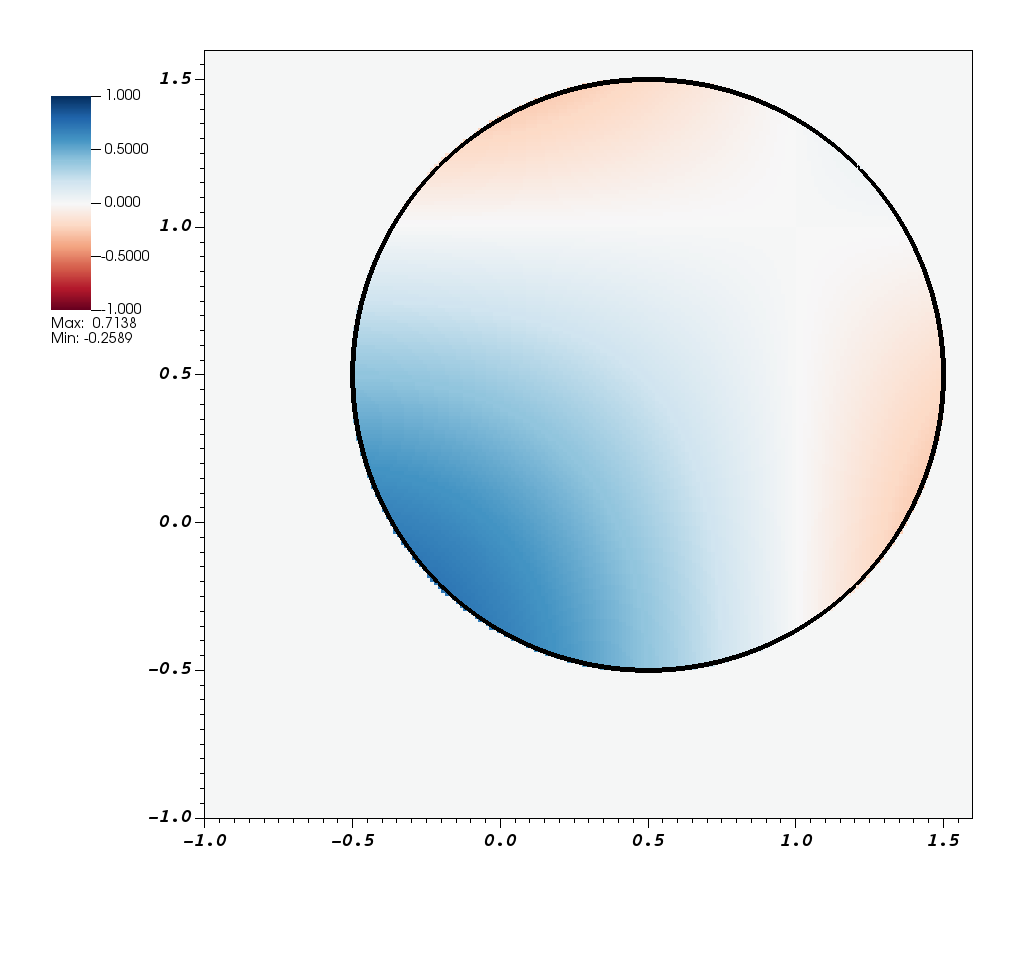}
		\caption{Test-\ref{enum:Translation}: $t=1$}
	\end{subfigure}\\
    \begin{subfigure}{0.3\linewidth}
		\centering
		\includegraphics[width=0.9\linewidth]{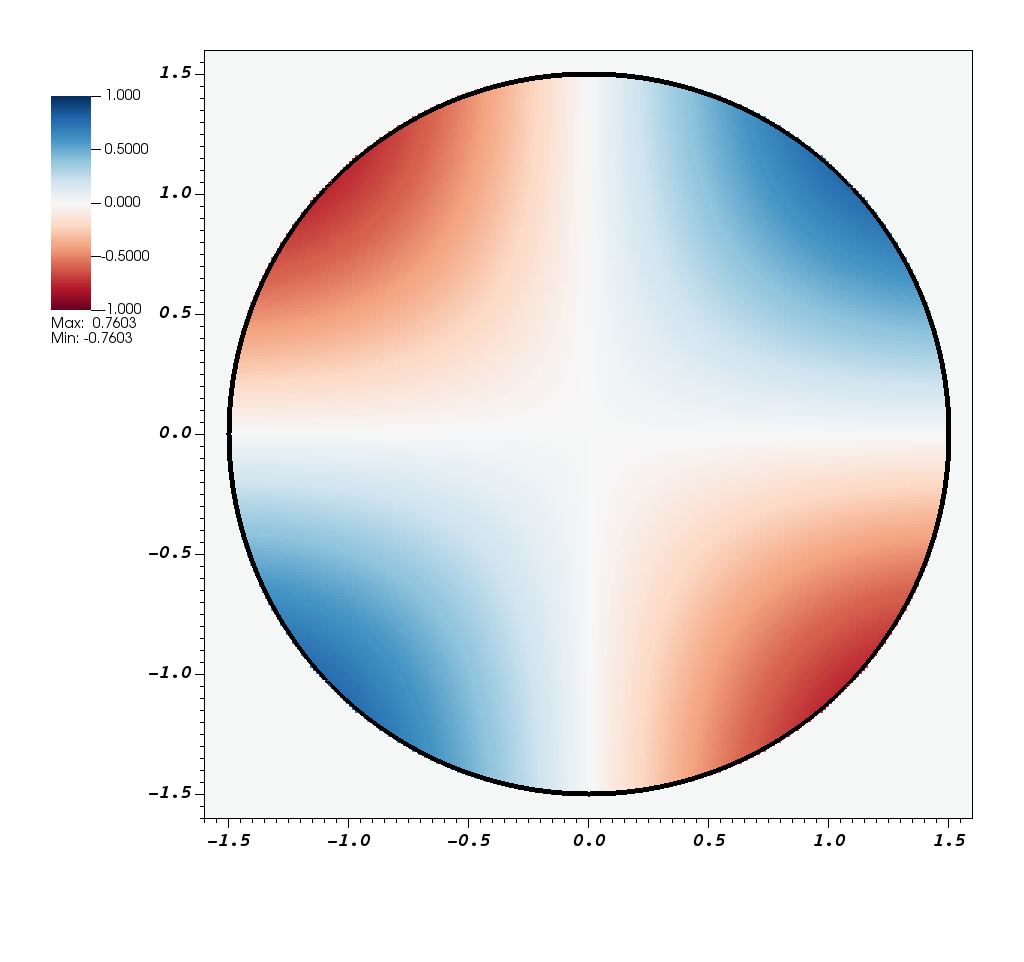}
		\caption{Test-\ref{enum:Contraction}: $t=0$}
	\end{subfigure}
	\begin{subfigure}{0.3\linewidth}
		\centering
		\includegraphics[width=0.9\linewidth]{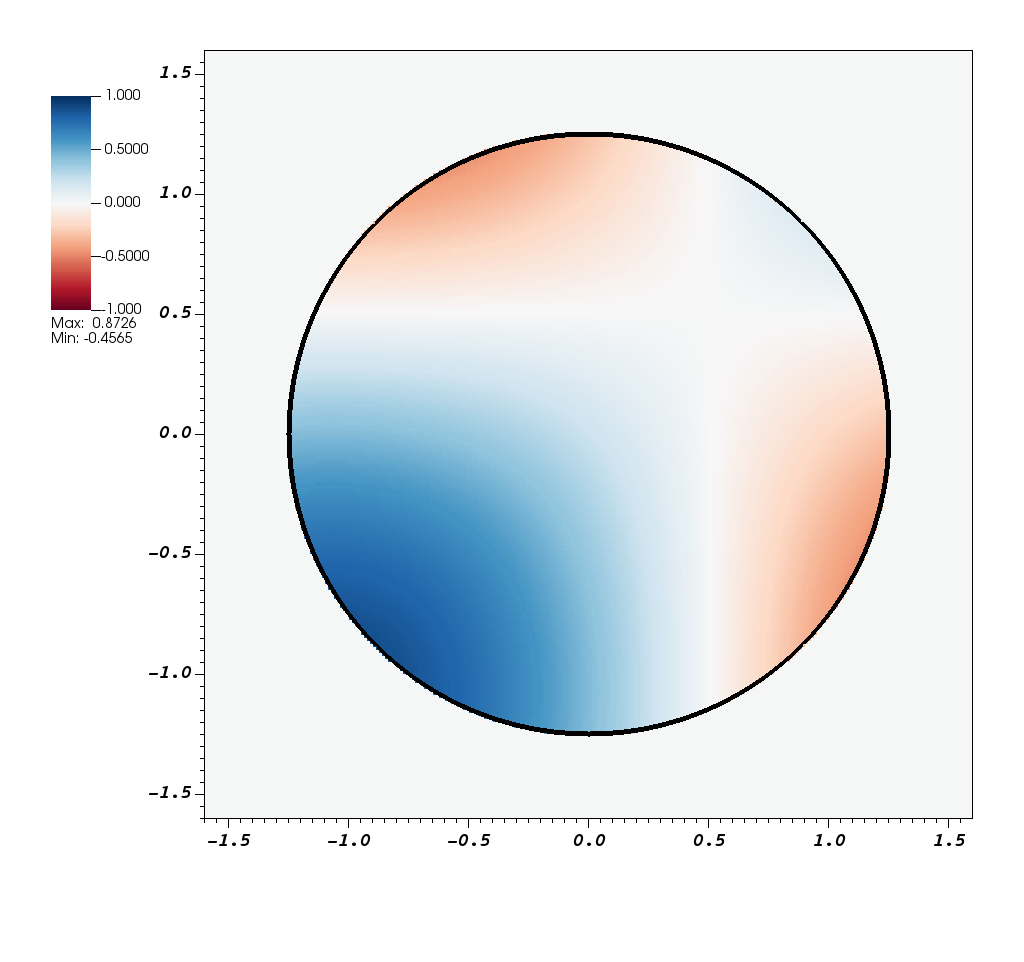}
		\caption{Test-\ref{enum:Contraction}: $t=0.5$}
	\end{subfigure}
	\begin{subfigure}{0.3\linewidth}
		\centering
		\includegraphics[width=0.9\linewidth]{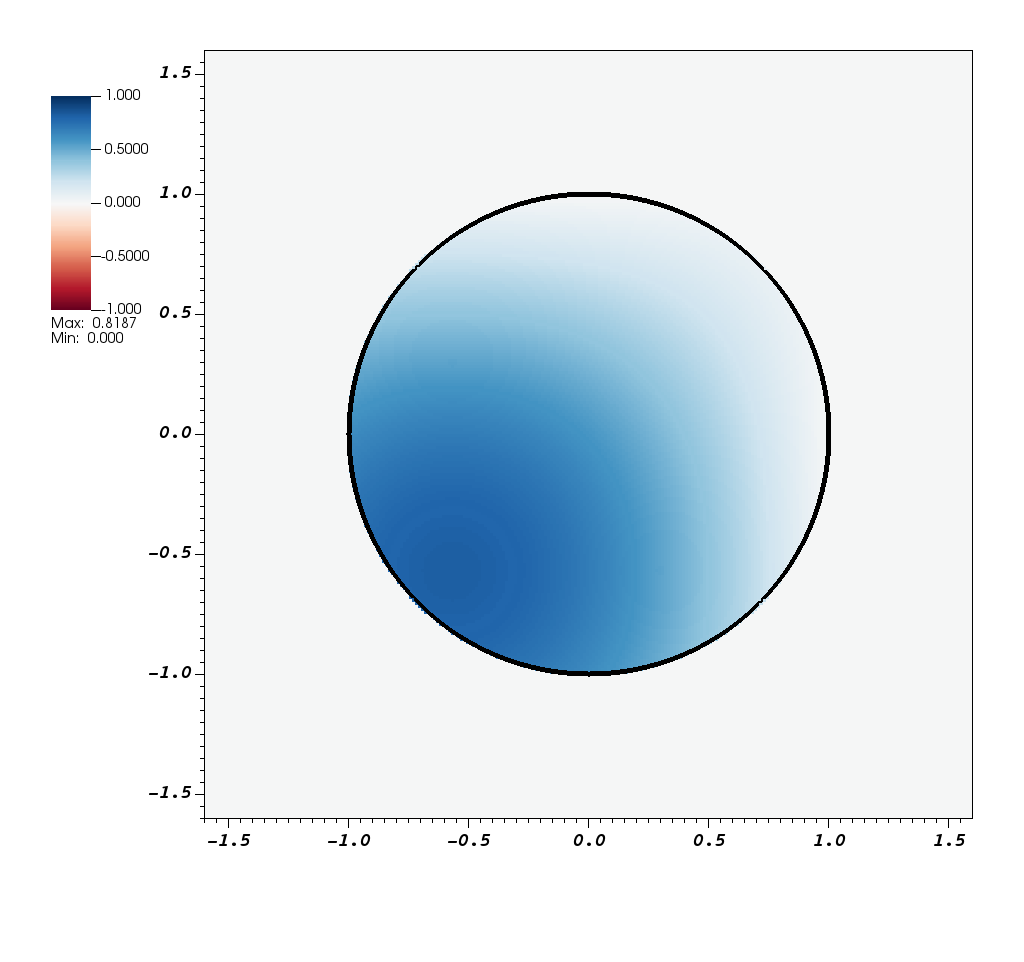}
		\caption{Test-\ref{enum:Contraction}: $t=1$}
	\end{subfigure}\\
    \begin{subfigure}{0.3\linewidth}
		\centering
		\includegraphics[width=0.9\linewidth]{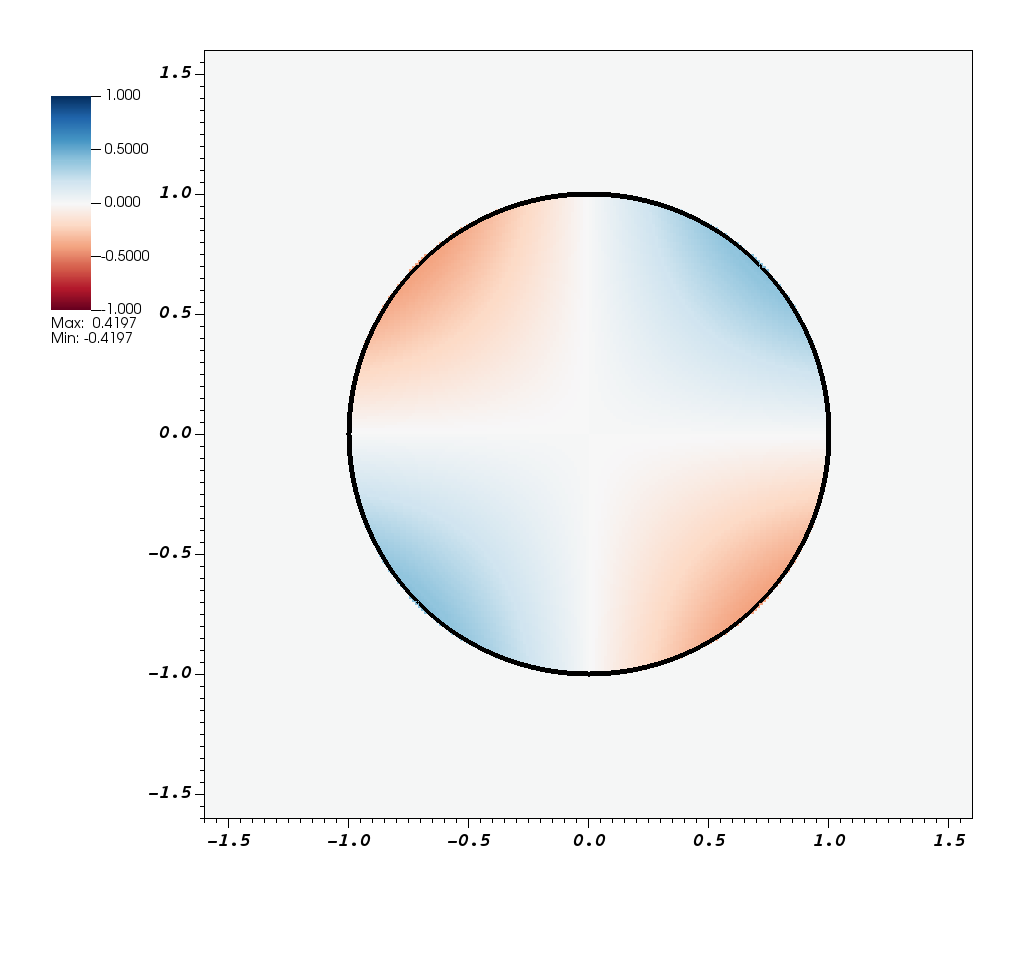}
		\caption{Test-\ref{enum:Expansion}: $t=0$}
	\end{subfigure}
	\begin{subfigure}{0.3\linewidth}
		\centering
		\includegraphics[width=0.9\linewidth]{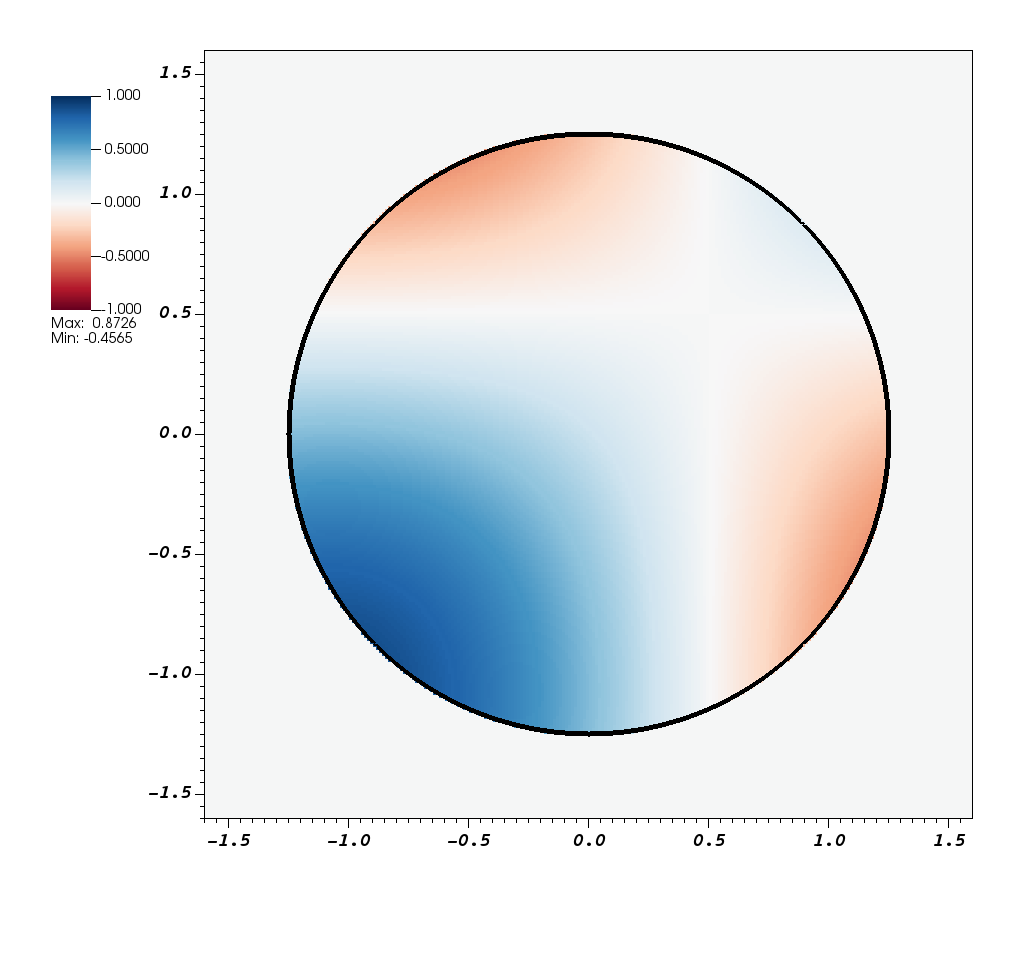}
		\caption{Test-\ref{enum:Expansion}: $t=0.5$}
	\end{subfigure}
	\begin{subfigure}{0.3\linewidth}
		\centering
		\includegraphics[width=0.9\linewidth]{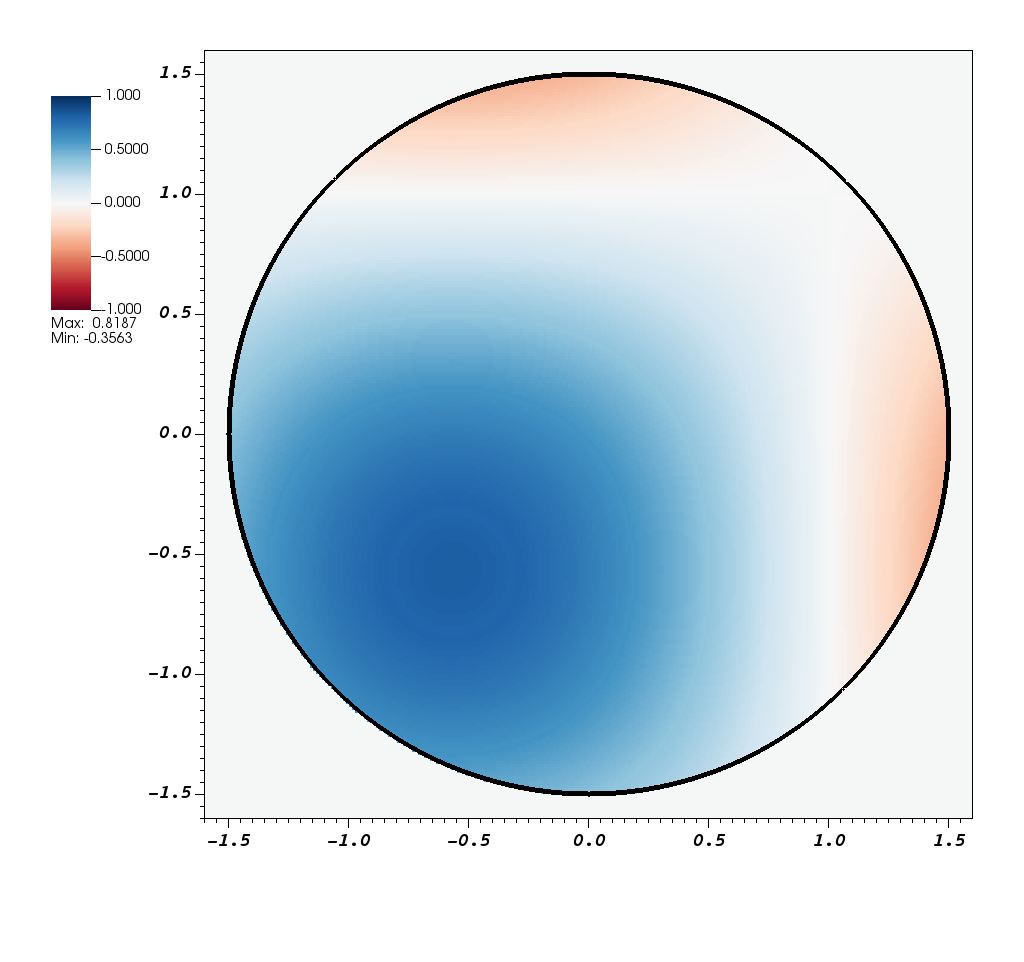}
		\caption{Test-\ref{enum:Expansion}: $t=1$}
	\end{subfigure}
    \caption{Numerical concentration fields at different times. 
    The moving boundary is depicted in a black curve. 
    The simulation is run to a final time of $T=1$ 
     with a grid resolution of $h=1/80$ 
     and the CFL number of $C_\mathrm{CFL}\approx 0.05$.}
    \label{fig:DecaySinSin}
\end{figure}

\subsection{Problem 3: Vortex flow}
We now examine our method for solving the advection-diffusion equation 
 on a deformed moving region. 
The initial domain $\Omega(0)$ is a disk 
 with a radius of $0.9$ and centered at $(2,3)$. 
The disk is driven by a vortex flow given by 
\begin{equation}
    \label{eq:VelOfVortexFlow}
    \bv=\cos\left(\frac{\pi t}{8}\right)\left(
        \sin^2\left(\frac{\pi}{4} x\right)\sin\left(\frac{\pi}{2} y\right), 
        -\sin^2\left(\frac{\pi}{4} y\right)\sin\left(\frac{\pi}{2} x\right)
     \right)^{\top}.
\end{equation}
The exact concentration field and the advection velocity 
 are the same as \cref{eq:DecaySinSin}. 
The computation domain is $\Omega_{R}=(0,3)\times(1,4)$, 
 the Peclet number is $\mathrm{Pe}=10$, 
 and the time step size $k$ is chosen by the CFL condition 
 $C_{\mathrm{CFL}}\coloneqq \frac{\max\|\bu\|_2 k}{\epsilon h}\approx 0.05$, 
 the merge threshold is $\epsilon=0.15$. 
The simulation is run to a final time of $T=2$.

\Cref{tab:VortexFlow} shows the errors 
 in the $L^1,\,L^2$ and $L^\infty$ norms, 
 along with the corresponding convergence rates. 
The result confirms the fourth-order accuracy of our numerical method 
 when handling deformed moving regions. 
\Cref{fig:VortexFlow} depicts the concentration field 
 at various time steps $t\in[0,2]$ for a grid resolution of $h=\frac{1}{80}$.

\begin{table}[htbp]
    \centering
    \begin{tabular}{c|c|c|c|c|c|c|c}
        \hline
         & $h=\frac{1}{10}$ & rate & $h=\frac{1}{20}$ & rate 
         & $h=\frac{1}{40}$ & rate & $h=\frac{1}{80}$ \\
        \hline
        $L^\infty$ & 
        9.38e-07 & 3.75 & 6.98e-08 & 4.49 & 3.10e-09 & 3.80 & 2.22e-10 \\
        $L^1$ & 
        4.42e-07 & 4.40 & 2.10e-08 & 3.49 & 1.87e-09 & 3.75 & 1.39e-10 \\
        $L^2$ & 
        3.88e-07 & 4.51 & 1.71e-08 & 3.49 & 1.52e-09 & 3.76 & 1.12e-10 \\
        \hline
    \end{tabular}
    \caption{Errors and convergence rates 
     for solving advection-diffusion equation 
     on a deformed moving region. 
    Simulations are run to a final time of $T=2$ 
     and the CFL number is $C_\mathrm{CFL}\approx 0.05$.
    }
    \label{tab:VortexFlow}
\end{table}

\begin{figure}[htb]
    \centering
    \begin{subfigure}{0.3\linewidth}
		\centering
		\includegraphics[width=0.9\linewidth]{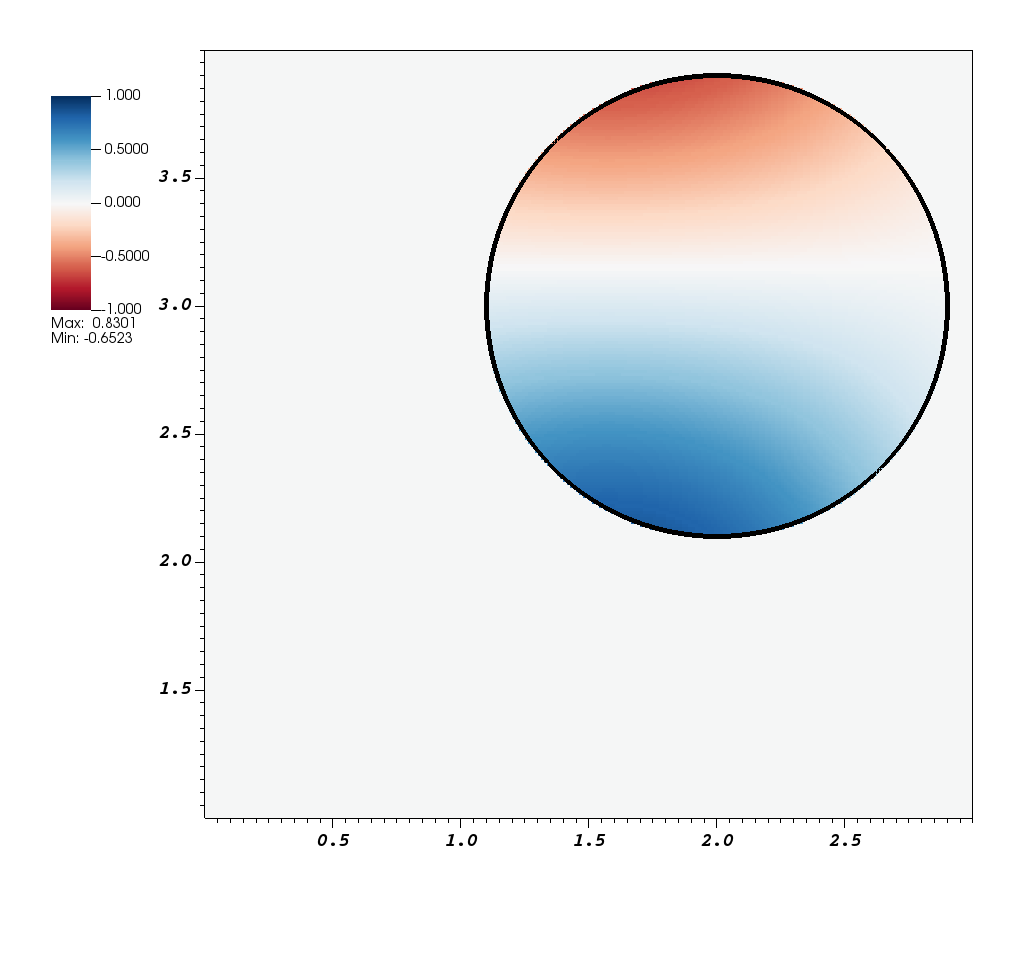}
		\caption{$t=0$}
	\end{subfigure}
	\begin{subfigure}{0.3\linewidth}
		\centering
		\includegraphics[width=0.9\linewidth]{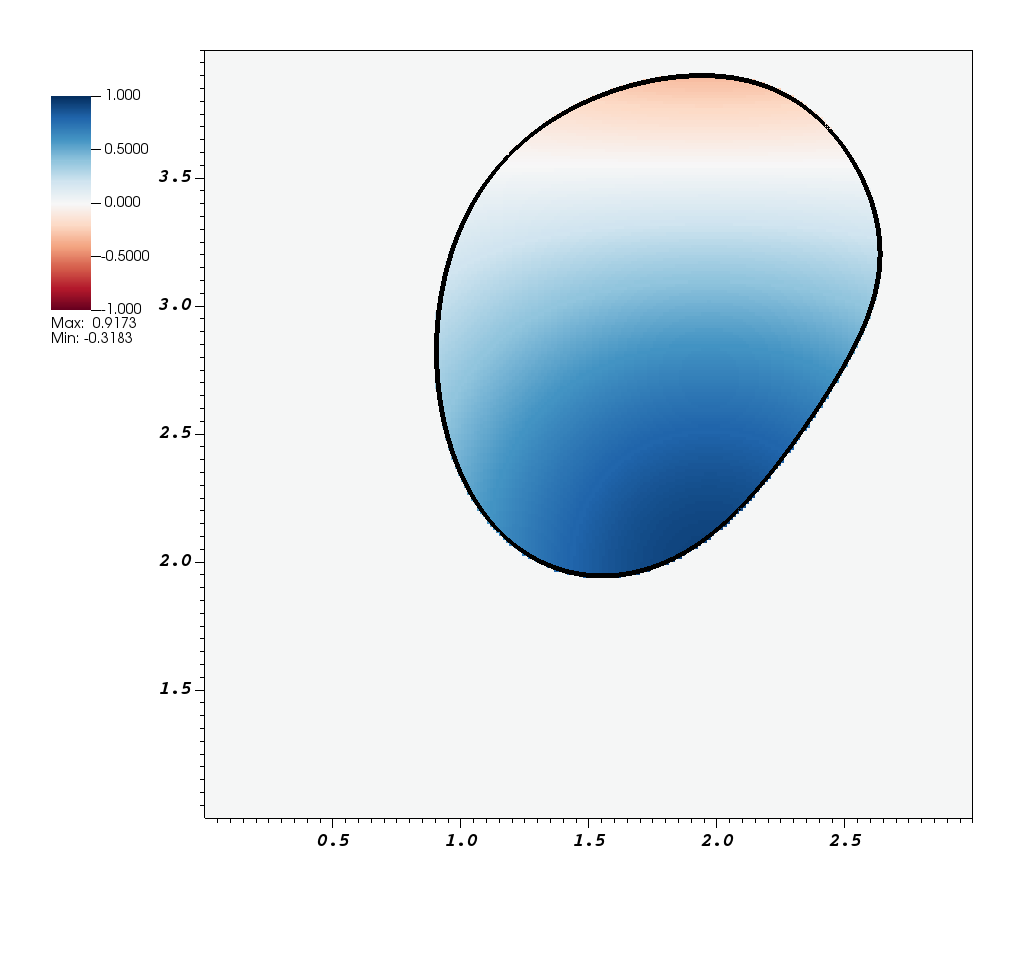}
		\caption{Test-1: $t=0.4$}
	\end{subfigure}
	\begin{subfigure}{0.3\linewidth}
		\centering
		\includegraphics[width=0.9\linewidth]{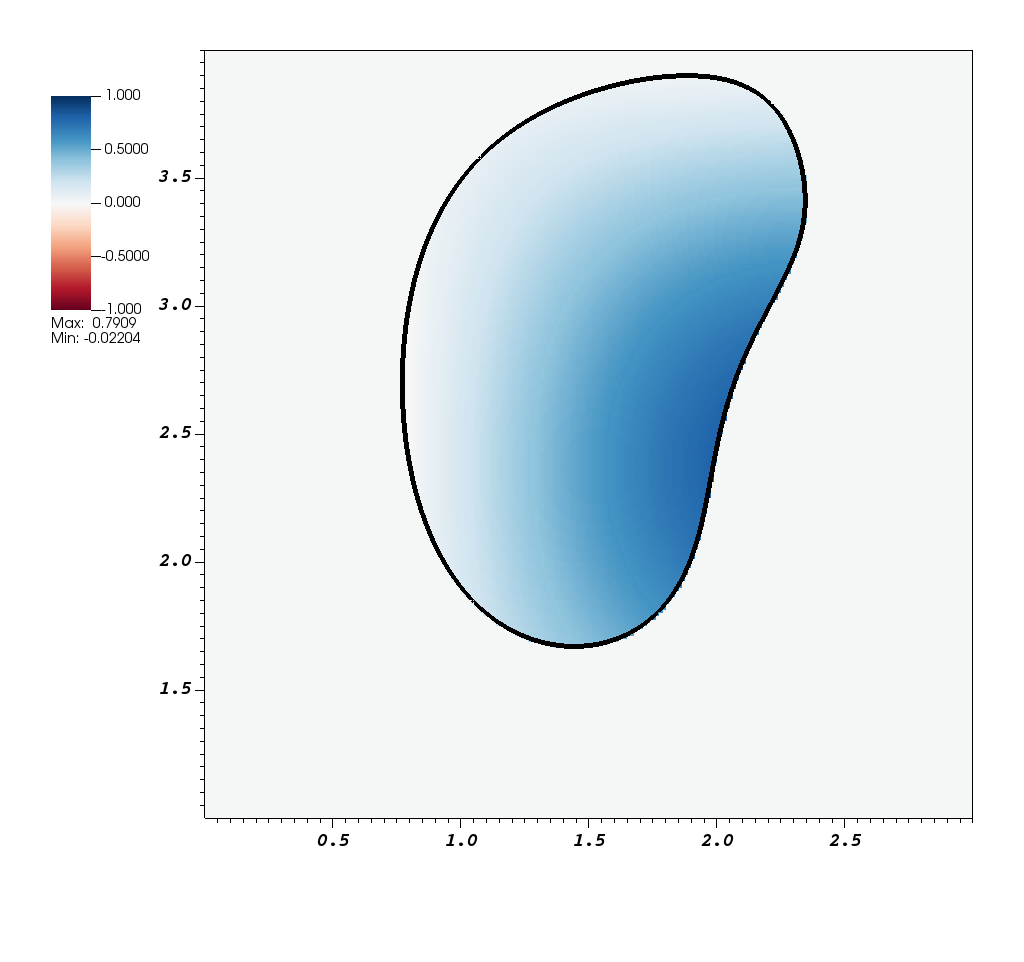}
		\caption{Test-1: $t=0.8$}
	\end{subfigure}\\
    \begin{subfigure}{0.3\linewidth}
		\centering
		\includegraphics[width=0.9\linewidth]{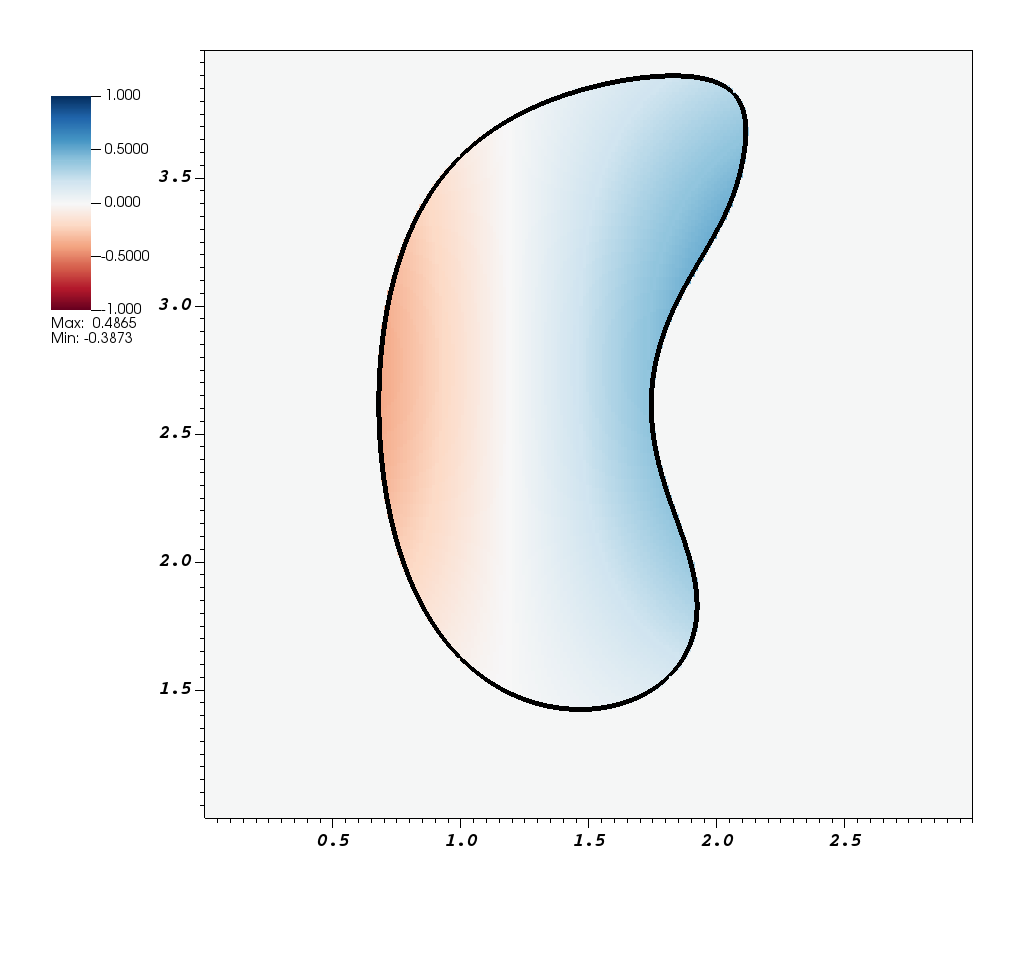}
		\caption{Test-2: $t=1.2$}
	\end{subfigure}
	\begin{subfigure}{0.3\linewidth}
		\centering
		\includegraphics[width=0.9\linewidth]{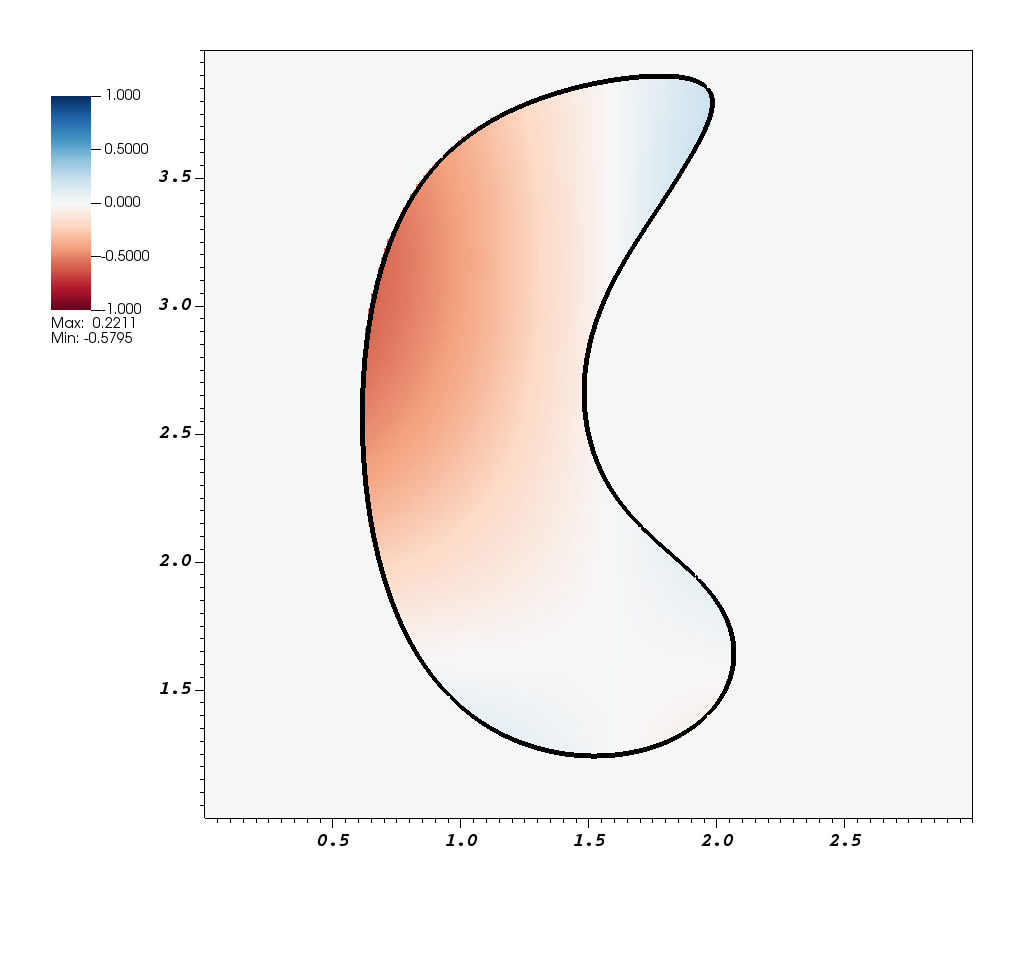}
		\caption{Test-2: $t=1.6$}
	\end{subfigure}
	\begin{subfigure}{0.3\linewidth}
		\centering
		\includegraphics[width=0.9\linewidth]{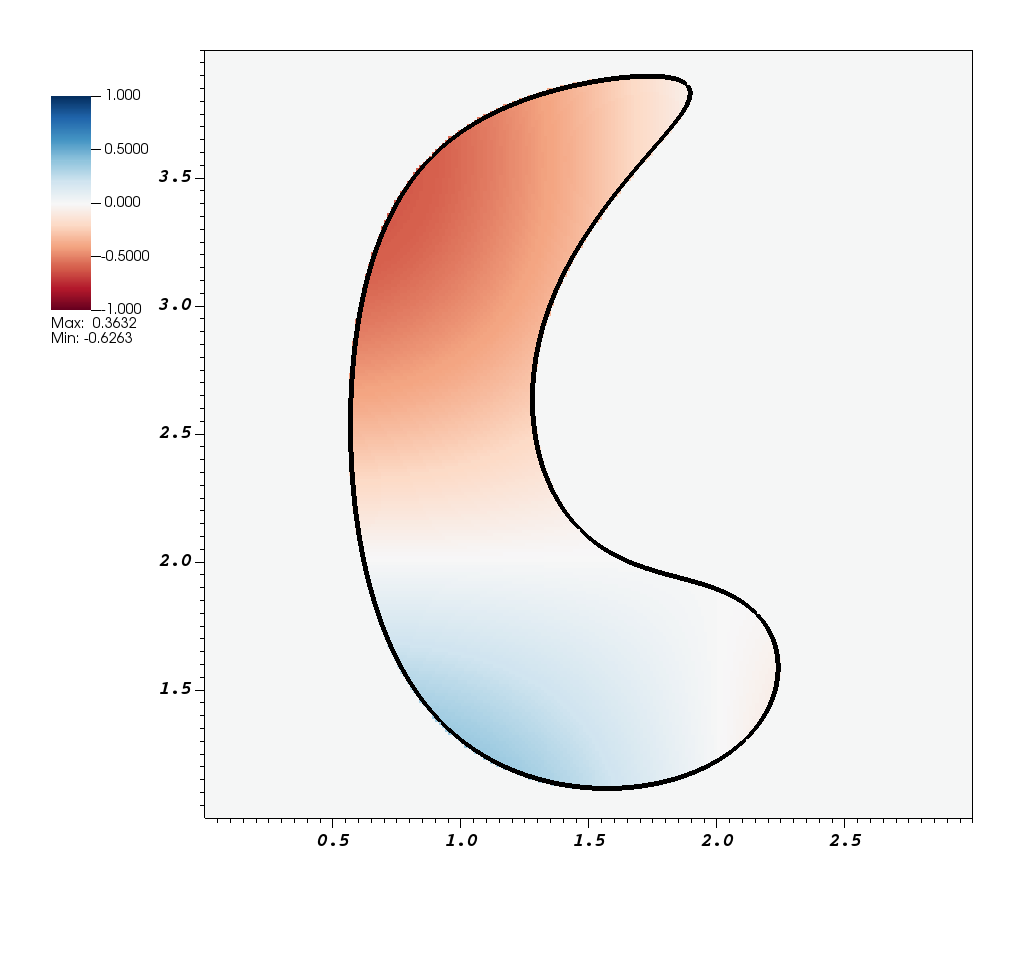}
		\caption{Test-2: $t=2$}
	\end{subfigure}
    \caption{Numerical concentration fields at different times. 
    The moving boundary is depicted in a black curve. 
    The simulation is run to a final time of $T=2$ 
     with a grid resolution of $h=1/80$ 
     and the CFL number of $C_\mathrm{CFL}\approx0.05$.}
    \label{fig:VortexFlow}
\end{figure}

\section{Conclusions}
\label{sec:Conclusions}
We have proposed a fourth-order accurate algorithm 
 for solving the two-dimensional advection-diffusion equations 
 with moving boundaries. 
The introduction of Yin sets and the ARMS technique 
 enables us to explicitly and accurately model complex moving regions, 
 while the incorporation of PLG algorithm 
 enables the handling of finite volume spatial discretization 
 on irregular computational regions. 

A key innovation of this work is the cell-merging technique, 
 which enables high-order accurate cut-cell methods for 
 partial differential equations(PDEs) with moving boundaries. 
This technique not only maintains temporal continuity for PDEs 
 using cell-integral values as the time evolution variable, 
 but it also applicable to PDEs 
 using face-integral values as the time evolution variable, 
 such as solving the incompressible Navier-Stokes equations(INSE) 
 with moving boundaries through cut-cell method, 
 which is exactly our immediate research focus. 
Subsequently, we plan to integrate these methods 
 to address the Boussinesq equations, 
 which will enable us to investigate intriguing phenomena 
 such as the St.Andrew's cross.

% \bibliography{bib/reference}

\begin{thebibliography}{10}

\bibitem{Ahn2007MOF}
{\sc H.~T. Ahn and M.~Shashkov}, {\em Multi-material interface reconstruction on generalized polyhedral meshes}, J. Comput. Phys., 226 (2007), pp.~2096--2132.

\bibitem{Barrett2022}
{\sc A.~Barrett, A.~L. Fogelson, and B.~E. Griffith}, {\em A hybrid semi-lagrangian cut cell method for advection-diffusion problems with {R}obin boundary conditions in moving domains}, Journal of Computational Physics, 449 (2022), p.~110805.

\bibitem{Boiarkine2011ConvDiffALE}
{\sc O.~Boiarkine, D.~Kuzmin, S.~Čanić, G.~Guidoboni, and A.~Mikelić}, {\em A positivity-preserving {ALE} finite element scheme for convection–diffusion equations in moving domains}, Journal of Computational Physics, 230 (2011), pp.~2896--2914.

\bibitem{Causon2001Cell_Merging}
{\sc D.~Causon, D.~Ingram, and C.~Mingham}, {\em A cartesian cut cell method for shallow water flows with moving boundaries}, Advances in Water Resources, 24 (2001), pp.~899--911.

\bibitem{Cremonesi2020PFEM_Review}
{\sc M.~Cremonesi, A.~Franci, S.~R. Idelsohn, and E.~O{\~n}ate}, {\em A state of the art review of the particle finite element method (pfem)}, Archives of Computational Methods in Engineering, 27 (2020), pp.~1709 -- 1735.

\bibitem{Fei2018}
{\sc L.~Fei, K.~H. Luo, C.~Lin, and Q.~Li}, {\em Modeling incompressible thermal flows using a central-moments-based lattice {B}oltzmann method}, International Journal of Heat and Mass Transfer, 120 (2018), pp.~624--634.

\bibitem{Formaggia2004ALE}
{\sc L.~Formaggia and F.~Nobile}, {\em Stability analysis of second-order time accurate schemes for {ALE–FEM}}, Computer Methods in Applied Mechanics and Engineering, 193 (2004), pp.~4097--4116.

\bibitem{Glowinski2001}
{\sc R.~Glowinski, T.~Pan, T.~I. Helsa, D.~Joseph, and J.~P{\'e}riaux}, {\em A fictitious domain approach to the direct numerical simulation of incompressible viscous flow past moving rigid bodies: application to particulate flow}, Journal of Computational Physics, 169 (2001), pp.~363--426.

\bibitem{Helzel2005H_Box}
{\sc C.~Helzel, M.~J. Berger, and R.~J. Leveque}, {\em A high-resolution rotated grid method for conservation laws with embedded geometries}, SIAM Journal on Scientific Computing, 26 (2005), pp.~785--809.

\bibitem{Hirt1981VOF}
{\sc C.~W. Hirt and B.~D. Nichols}, {\em Volume of fluid ({VOF}) method for the dynamics of free boundaries}, J. Comput. Phys., 39 (1981), pp.~201--225.

\bibitem{Hu2024ARMS}
{\sc D.~Hu, K.~Liang, L.~Ying, S.~Li, and Q.~Zhang}, {\em {ARMS}: Adding and removing markers on splines for high-order general interface tracking under the {MARS} framework}, Journal of Computational Physics, 521 (2024), p.~113574.

\bibitem{Idelsohn2006PFEM}
{\sc S.~Idelsohn, E.~Oñate, F.~D. Pin, and N.~Calvo}, {\em Fluid–structure interaction using the particle finite element method}, Computer Methods in Applied Mechanics and Engineering, 195 (2006), pp.~2100--2123.

\bibitem{Kennedy2003IMEX}
{\sc C.~A. Kennedy and M.~H. Carpenter}, {\em Additive {R}unge–{K}utta schemes for convection–diffusion–reaction equations}, Applied Numerical Mathematics, 44 (2003), pp.~139--181.

\bibitem{Kim2006IBM}
{\sc D.~Kim and H.~Choi}, {\em Immersed boundary method for flow around an arbitrarily moving body}, Journal of Computational Physics, 212 (2006), pp.~662--680.

\bibitem{Leveque1994IIM}
{\sc R.~J. LeVeque and Z.~Li}, {\em The immersed interface method for elliptic equations with discontinuous coefficients and singular sources}, SIAM Journal on Numerical Analysis, 31 (1994), pp.~1019--1044.

\bibitem{LeVeque1997IIM}
{\sc R.~J. LeVeque and Z.~Li}, {\em Immersed interface methods for stokes flow with elastic boundaries or surface tension}, SIAM Journal on Scientific Computing, 18 (1997), pp.~709--735.

\bibitem{LiZhilin1997IMM_MvInterface}
{\sc Z.~Li}, {\em Immersed interface methods for moving interface problems}, Numerical Algorithms, 14 (1997), pp.~269--293.

\bibitem{Liao2010IBM_MvBdry}
{\sc C.-C. Liao, Y.-W. Chang, C.-A. Lin, and J.~M. McDonough}, {\em Simulating flows with moving rigid boundary using immersed-boundary method}, Computers \& Fluids, 39 (2010), pp.~152--167.

\bibitem{Meinke2013}
{\sc M.~Meinke, L.~Schneiders, C.~Günther, and W.~Schröder}, {\em A cut-cell method for sharp moving boundaries in cartesian grids}, Computers \& Fluids, 85 (2013), pp.~135--142.

\bibitem{Peskin1972IBM}
{\sc C.~S. Peskin}, {\em Flow patterns around heart valves: A numerical method}, Journal of Computational Physics, 10 (1972), pp.~252--271.

\bibitem{Peskin2002IBM}
{\sc C.~S. Peskin}, {\em The immersed boundary method}, Acta Numerica, 11 (2002), pp.~479 -- 517.

\bibitem{Russel2003IIM}
{\sc D.~Russell and Z.~{Jane Wang}}, {\em A cartesian grid method for modeling multiple moving objects in {2D} incompressible viscous flow}, Journal of Computational Physics, 191 (2003), pp.~177--205.

\bibitem{Schneiders2013}
{\sc L.~Schneiders, D.~Hartmann, M.~Meinke, and W.~Schröder}, {\em An accurate moving boundary formulation in cut-cell methods}, Journal of Computational Physics, 235 (2013), pp.~786--809.

\bibitem{Sharan2003}
{\sc M.~Sharan and S.~G. Gopalakrishnan}, {\em Mathematical Modeling of Diffusion and Transport of Pollutants in the Atmospheric Boundary Layer}, Birkh{\"a}user Basel, 2003, pp.~357--394.

\bibitem{Tryggvason2001FT}
{\sc G.~Tryggvason, B.~Bunner, D.~Juric, W.~Tauber, S.~Nas, J.~Han, N.~Al-Rawahi., and Y.-J. Jan}, {\em A front-tracking method for the computations of multiphase flow}, J. Comput. Phys., 169 (2001), pp.~708--759.

\bibitem{Xu2006IIM_MvBdry}
{\sc S.~Xu and Z.~J. Wang}, {\em An immersed interface method for simulating the interaction of a fluid with moving boundaries}, Journal of Computational Physics, 216 (2006), pp.~454--493.

\bibitem{Yu2006}
{\sc Z.~Yu, X.~Shao, and A.~Wachs}, {\em A fictitious domain method for particulate flows with heat transfer}, Journal of Computational Physics, 217 (2006), pp.~424--452.

\bibitem{ZhangLT2007IFEM}
{\sc L.~Zhang and M.~Gay}, {\em Immersed finite element method for fluid-structure interactions}, Journal of Fluids and Structures, 23 (2007), pp.~839--857.

\bibitem{ZhangLucy2004IFEM}
{\sc L.~Zhang, A.~Gerstenberger, X.~Wang, and W.~K. Liu}, {\em Immersed finite element method}, Computer Methods in Applied Mechanics and Engineering, 193 (2004), pp.~2051--2067.

\bibitem{Zhang2018CubicMARS}
{\sc Q.~Zhang}, {\em Fourth- and higher-order interface tracking via mapping and adjusting regular semianalytic sets represented by cubic splines}, SIAM Journal on Scientific Computing, 40 (2018), pp.~A3755--A3788.

\bibitem{Zhang2014IPAM}
{\sc Q.~Zhang and A.~Fogelson}, {\em Fourth-order interface tracking in two dimensions via an improved polygonal area mapping method}, SIAM Journal on Scientific Computing, 36 (2014), pp.~A2369--A2400.

\bibitem{Zhang2016MARS}
{\sc Q.~Zhang and A.~Fogelson}, {\em {MARS}: An analytic framework of interface tracking via mapping and adjusting regular semialgebraic sets}, SIAM Journal on Numerical Analysis, 54 (2016), pp.~530--560.

\bibitem{Zhang2012AdvDiff}
{\sc Q.~Zhang, H.~Johansen, and P.~Colella}, {\em A fourth-order accurate finite-volume method with structured adaptive mesh refinement for solving the advection-diffusion equation}, SIAM Journal on Scientific Computing, 34 (2012), pp.~B179--B201.

\bibitem{Zhang2020YinSet}
{\sc Q.~Zhang and Z.~Li}, {\em Boolean algebra of two-dimensional continua with arbitrarily complex topology}, Math. Comput., 89 (2020), pp.~2333--2364.

\bibitem{Zhang2024PLG}
{\sc Q.~Zhang, Y.~Zhu, and Z.~Li}, {\em An {AI}-aided algorithm for multivariate polynomial reconstruction on {C}artesian grids and the {PLG} finite difference method}, Journal of Scientific Computing, 101 (2024), p.~66.

\bibitem{Zhang2024Elliptic}
{\sc Y.~Zhu, Z.~Li, and Q.~Zhang}, {\em A fast fourth-order cut cell method for solving elliptic equations in two-dimensional irregular domains}, In Progreess.

\end{thebibliography}

\bibliographystyle{siamplain}

\end{sloppypar}
\end{document}